\def\C{{\mathbb C}}% complex numbers
\def\R{{\mathbb R}}% real numbers
\def\N{{\mathbb N}}% nonnegative integers
\def\ge{\geqslant}%greaterorequal
\newcommand{\eps}{\varepsilon}
\theoremstyle{plain}
\newtheorem{theorem}{Theorem}[section]
\newtheorem{lemma}[theorem]{Lemma}
\newtheorem{corollary}[theorem]{Corollary}
\newtheorem{proposition}[theorem]{Proposition}
\theoremstyle{definition}
\newtheorem{remark}[theorem]{Remark}
\newtheorem*{remark*}{Remark}
\numberwithin{equation}{section}
\begin{document}

\title[Strong magnetic field limit in a nonlinear Iwatsuka-type model]
{Strong magnetic field limit in a nonlinear Iwatsuka-type model}

\author[E. Richman]{Evelyn Richman}

\address[E.~Richman]
{Department of Mathematics, Statistics, and Computer Science, M/C 249, University of Illinois at Chicago, 851 S. Morgan Street, Chicago, IL 60607, USA}
\email{jrichm2@uic.edu}

\author[C. Sparber]{Christof Sparber}

\address[C.~Sparber]
{Department of Mathematics, Statistics, and Computer Science, M/C 249, University of Illinois at Chicago, 851 S. Morgan Street, Chicago, IL 60607, USA}
\email{sparber@math.uic.edu}

\begin{abstract}
We study the strong magnetic field limit for a nonlinear Iwatsuka-type model, i.e. a nonlinear Schr\"odinger equation in two 
spatial dimensions with a magnetic vector potential that only depends on the $x$-coordinate. 
Using a high-frequency averaging technique, we show that this equation can be effectively described by a nonlocal nonlinear model, which is no longer dispersive. 
We also prove that, in this asymptotic regime, inhomogeneous nonlinearities are confined along the $y$-axis. 
\end{abstract}

\date{\today}

\subjclass[2000]{35Q55, 35B25}
\keywords{Nonlinear Schr\"odinger equation, magentic  fields, confinement, high frequency averaging}

\thanks{This publication is based on work supported by the NSF through grant no. DMS-1348092}
\maketitle

%%%%%%%%%%%%%%%%%%%%%%%%%%%%%%%%%%%
%%%%%%%%%%%%%%%%%%%%%%%%%%%%%%%%%%%

\section{\textbf{Introduction}}\label{sec:intro}

The spectral and dynamical properties of magnetic Schr\"odinger equations have been of interest for many years, see, e.g., \cite{Erd} and the references therein. 
A particular application of such equations arises in the mathematical description of 
Quantum Hall systems. The typical Hall system is modeled by an electron moving in the plane $\R^2$ subject to a constant transverse magnetic field, cf. \cite{HiSo}. The corresponding 
Schr\"odinger operator is 
\begin{equation}\label{H}
\mathcal H =\frac{1}{2} \big(-i \nabla +A(x_1, x_2)\big)^2\quad \text{on $L^2(\R^2)$,}
\end{equation}
and the corresponding magnetic field is given by the third component of the cross product $B(x_1,x_2) =  (\nabla \times A(x_1, x_2))_3$. 
In \cite{Iwa1, Iwa2}, Iwatsuka considered the situation where the magnetic field is a function of $x_1\in \R$ only, i.e. $B(x_1, x_2) = B(x_1)$. Using a particular choice of gauge, 
this can be achieved via a vector potential of the form $$A(x_1,x_2)= (0, \beta(x_1)).$$

The aim of this paper is to describe how strong fields of this particular type asymptotically yield a purely magnetic confinement within the two-dimensional quantum dynamics. 
In the linear case such questions have recently been addressed in \cite{CHSV}. 
Here, we are mainly interested in the situation where additional nonlinear self-interactions are present. Such magnetic nonlinear Schr\"odinger equations describe, e.g., the dynamics of 
Fermion pairs in the low-density limit of BCS theory \cite{HS}, and we also refer to \cite{Mi, naka} for recent mathematical results concerning the Cauchy problem of magnetic 
NLS.  
In the following, we shall study the case where 
\begin{equation}\label{B}
\beta(x_1) = \frac{b}{\eps^2} x_1,\quad b \in \R,
\end{equation}
with $0<\eps\ll 1$ a small, dimensionless parameter. In the limit $\eps\to 0_+$, this yields a large (constant) magnetic field with strength $|B|=\mathcal O(\frac{1}{\eps^2})$. 
The nonlinear Iwatsuka-type model we shall consider is then given by
\begin{equation}\begin{split}\label{pde1}
		i\partial_t \psi =&\,\frac{1}{2}\big(-i\nabla + A^\varepsilon(x_1)\big)^2\psi + \varepsilon^{\sigma}\lambda |\psi|^{2\sigma}\psi\\
		=&\, \frac{1}{2}\left(-\partial_{{x_1}}^2 -\partial_{{x_2}}^2 - \frac{i2b}{\varepsilon^2}{x_1}\partial_{{x_2}} + \frac{1}{\varepsilon^4}b^2{x_1}^2\right)\psi + \varepsilon^{\sigma}\lambda|\psi|^{2\sigma}\psi,
	\end{split}\end{equation}
where $\sigma \in \mathbb{N}$ and $\lambda \in\R$, describing both focusing and defocusing nonlinearities. The sign of the nonlinearity will play no role for our analysis, since we shall prove below that 
the solution to \eqref{pde1} exists at least as long as the solution to a certain approximate equation does. Hence, we do not run into the problem of a possible finite time blow-up \cite{Mi}.

We want to analyze the behavior of the solution $\psi(t)$ in limit as $\eps\to 0_+$, assuming that the initial data for equation \eqref{pde1} is of the form
\[
\psi(0, x_1, x_2)=\varepsilon^{-1/2}\psi_0\left(\frac{x_1}{\varepsilon}, x_2\right), \ \text{where $\Vert \psi_0\Vert_{L^2(\mathbb{R}^2)} = 1$.}
\]
In other words, we assume that the initial wave function is already confined at the scale $\eps$ in the $x_1$-directions (an assumption which is consistent with the asymptotic regime considered). 
The fact that we have an effective coupling constant of the form $\lambda^\eps = \eps^\sigma \lambda$, with $\lambda \in \R$ fixed, then allows us to re-scale \eqref{pde1} 
via 
\begin{equation*}
x = \frac{x_1}{\eps}, \quad  y =x_2.
\end{equation*}
Keeping the $L^2$-norm of the rescaled solution $\psi^\eps( x, y) = \sqrt{\eps} \psi(x_1, x_2)$ fixed we thereby retain 
nonlinear effects of order $\mathcal O(1)$ in the rescaled nonlinear Schr\"odinger equation (NLS) for $\psi^\eps$. More precisely, $\psi^\eps$ solves 
\begin{equation}\begin{split}\label{pde2}
		i\partial_t \psi^\varepsilon = \frac{1}{\varepsilon^2}H\psi^\varepsilon - \frac{1}{2}\partial_{y}^2\psi^\varepsilon - \frac{ib}{\varepsilon}x\partial_{y}\psi^\varepsilon + \lambda|\psi^\varepsilon|^{2\sigma}\psi^\varepsilon,
		\quad \psi^\eps_{\mid t=0} = \psi_0(x,y),
	\end{split}\end{equation}
with $\eps$-independent initial data $\psi_0$. Here, and in the following, we denote by 
\[
H = -\frac{1}{2}\partial_{x}^2 + \frac{1}{2}b^2x^2
\]
the 1D quantum harmonic oscillator. The operator $H$ is essentially self-adjoint on $C_0^\infty(\R_x)\subset L^2(\mathbb{R}_x)$ with purely discrete spectrum 
\begin{equation}\label{spec}
{\rm spec}(H) = \left\{E_n=b(n+\tfrac{1}{2})\, : \, n\in\mathbb{N}_0\right\}.
\end{equation}
Due to the pre-factor $\tfrac{1}{\eps^2}$ in \eqref{pde2}, we expect a strong confining effect for $\psi^\eps$ in the $x$-direction. 
However, a non-trivial problem is introduced via the singular term $\propto \,\tfrac{1}{\eps}$, which 
does {\it not} commute with $H$ and which will have a profound effect on the effective dynamics (as we shall see).

Ignoring this issue for the moment, we shall proceed by filtering out the strong oscillations induced by $H$. We consequently expect that the filtered 
unknown admits a limit
	\begin{equation*}
		\phi^\varepsilon(t,x,y) := e^{itH/\varepsilon^2}\psi^\varepsilon(t,x,y)\stackrel{\eps\rightarrow 0_+ }{\longrightarrow}\phi(t,x,y),
	\end{equation*}
where $\phi$ solves an effective, $\eps$-independent equation. 
To formally derive this equation, we first note that $\phi^\eps$ solves 
\begin{equation}\label{filteredPDE}
		i\partial_t \phi^\varepsilon = -\frac{1}{2}\partial_{y}^2\phi^\varepsilon - \frac{ib}{\varepsilon}G\left(\frac{t}{\varepsilon^2}, \partial_{y}\phi^\varepsilon\right) + \lambda F\left(\frac{t}{\varepsilon^2}, \phi^\varepsilon\right), 
		\quad \phi^\eps_{\mid t=0} = \psi_0(x,y),
	\end{equation}
where 	
\begin{equation*}%\label{GF}
%\begin{aligned}
		G(\theta, u) :=\, e^{i\theta H}\left(xe^{-i\theta H}u\right),\quad
		F(\theta, u) :=\, e^{i\theta H}\left(\left|e^{-i\theta H}u\right|^{2\sigma}e^{-i\theta H}u\right).
%	\end{aligned}
	\end{equation*}
Next, we introduce the following Sobolev-type spaces for $m \in \mathbb N$:
	\begin{equation}\begin{split}\label{norm}
		\Sigma^m =&\, \Big\{u\in L^2(\mathbb{R}^2) \, : \, \Vert u\Vert_{\Sigma^m} < \infty\Big\},\\
		\Vert u\Vert_{\Sigma^m}^2: =&\, \Vert u\Vert_{L^2}^2 + \Vert H^{m/2}u\Vert_{L^2}^2 + \Vert \partial_{y}^mu\Vert_{L^2}^2.
	\end{split}\end{equation}
The choice $\Sigma\equiv \Sigma ^1$ thereby corresponds to the physical energy space associated with (\ref{pde2}).
For $m\ge2$, $\Sigma^m$ is a Banach algebra of the same type as commonly used for magnetic NLS systems, see \cite{dele, sparber, Mi}. (We note however, that our definition differs 
from the one in \cite{sparber} since it does not include any moments in the $y$-direction.)
It is then readily seen that $F\in C(\R\times\Sigma^2,\Sigma^2)$. Moreover, we can rewrite it in the form
\begin{equation*}\label{eq:F}
\begin{aligned}
F(\theta,u)=e^{i\theta (H-b/2)}\left(\left|e^{-i\theta (H-b/2)}u\right|^{2\sigma}e^{-i\theta (H-b/2)}u\right).
\end{aligned}
\end{equation*}
In view of \eqref{spec}, the operator $e^{i\theta (H-b/2)}$ is $\frac{2\pi}{b}$-periodic with respect to $\theta$, and hence so is $F$. We shall denote its average by 
\begin{equation}\label{eq:average}
\begin{split}
F_{\rm av}(u):= &\, \lim_{T \to \infty} \frac{1}{T} \int_0^T F\left(\theta, u\right) d\theta\\
= &\, \frac{b}{2\pi}\int_0^{\frac{2\pi}{b}}e^{i\theta H}\left(\left|e^{-i\theta H}u\right|^{2\sigma}e^{-i\theta H}u\right)d\theta.
\end{split}
\end{equation} 
We consequently expect that in the limit $\eps \to 0_+$: $F\left(\tfrac{t}{\eps^2}, \phi^\eps \right)$ converges (in some sense to be made precise) to $F_{\rm av}(\phi)$.

Coming back to the issue presented by the singular term $\propto \mathcal O(\tfrac{1}{\eps})$, a first key insight is that
\[
G_{\rm av}(u) := \frac{b}{2\pi}\int_0^{\frac{2\pi}{b}}G(\theta, u)d\theta \equiv 0.
\]
To see this, let $\chi_n(x)$ be an eigenfunction of $H$ associated to 
the $n$-th eigenvalue $E_n$, and denote the inner product on $L^2(\mathbb{R}_x)$ by ${\langle\cdot,\cdot\rangle}_{L^2_x}$. 
Then, we can compute
	\begin{equation}\begin{aligned}\label{Gav}
		G_{\rm av}(u) =&\, \frac{b}{2\pi}\int_0^{\frac{2\pi}{b}}\sum_{n = 0}^\infty e^{i\theta H}\left(xe^{-i\theta b(n+\frac{1}{2})}{\langle u, \chi_n\rangle}_{L^2_x}\chi_n\right)d\theta\\
		=&\, \frac{b}{2\pi}\int_0^{\frac{2\pi}{b}}\sum_{n = 0}^\infty\sum_{m = 0}^\infty e^{i\theta b(m+\frac{1}{2})}{\langle x\chi_n, \chi_m\rangle}_{L^2_x}
		e^{-i\theta b(n+\frac{1}{2})}{\langle u, \chi_n\rangle}_{L^2_x}\chi_m \, d\theta\\
		=&\, \frac{b}{2\pi}\int_0^{\frac{2\pi}{b}}\sum_{n = 0}^\infty\sum_{m \neq n}^\infty e^{i\theta b(m-n)}\langle x\chi_n, \chi_m\rangle_{L^2_x}{\langle u, \chi_n\rangle}_{L^2_x}\chi_m\, d\theta.
	\end{aligned}
\end{equation}
In the last line we have used the fact that $\langle x\chi_n, \chi_n\rangle_{L^2_x} = 0$, since $x\chi_n^2(x)$ is an odd function. 
Interchanging the order of summation and integration formally yields $G_{\rm av}(u) = 0$, by periodicity. One therefore expects the singular term to be at worst 
of order $\mathcal O(1)$, as $\eps \to 0_+$. 
Indeed, we shall prove in Section \ref{sec:filter} that for sufficiently regular $\phi^\eps$:
\[
		- \frac{ib}{\varepsilon}\int_0^tG\left(\frac{s}{\varepsilon^2}, \partial_{y}\phi^\varepsilon(s)\right)ds = \frac{1}{2}\int_0^t\partial_{y}^2\phi^\varepsilon(s)ds + \mathcal{O}(\eps),\quad \text{as $\eps \to 0_+$,}
\]
which combined with \eqref{filteredPDE} (in integral form) yields
\begin{equation*}\label{pdeSimp}
	\phi^\varepsilon = \psi_0 - i\lambda \int_0^tF\left(\frac{s}{\varepsilon^2}, \phi^\varepsilon(s)\right)ds + \mathcal{O}(\eps).
\end{equation*}
In the limit $\eps \to 0_+$, we thus (at least formally) obtain the following effective model:
\begin{equation}\label{limeq}
		i\partial_t \phi = \lambda F_{\rm av}(\phi),\quad \phi_{\mid t=0}= \psi_0(x,y).
\end{equation}
As a result of the strong magnetic confinement, this equation is seen to be no longer dispersive. It also involves a non-local, nonlinear self-interaction. 
The first main result of this paper can now be stated as follows:

\begin{theorem}\label{main}
Let $\sigma \in \mathbb{N}$ and $\lambda \in \R$. 

{\rm (i)} For all $\psi_0 \in \Sigma^{15}$ there exists $T_{\rm max} \in (0,\infty]$ such that the effective model \eqref{limeq} has a unique 
maximal solution $\phi\in C^1([0,T_{\rm max}),\Sigma^{15})$, depending continuously on the initial data.

{\rm (ii)} For all $T \in (0, T_{\rm max})$, there exist $\varepsilon_T > 0$ and $C_T > 0$ such that for all $\varepsilon \in (0,\varepsilon_T)$, the original equation \eqref{pde2} 
has a unique solution $$\psi^\varepsilon \in C([0,T], \Sigma^{15})\cap C^1([0,T],\Sigma^{13}),$$ which satisfies
	\begin{equation*}
		\sup_{t \in [0,T]}\big \Vert \psi^{\varepsilon} (t,  \cdot)- e^{-itH/\varepsilon^2}\phi(t,  \cdot) \big \Vert_{L^2} \leq C_T\varepsilon.
	\end{equation*}
Furthermore, $\Vert\psi^\varepsilon\Vert_{L^\infty([0,T], \Sigma^{15})}$ is uniformly bounded with respect to $\varepsilon$.
\end{theorem}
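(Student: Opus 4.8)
We would prove the two parts separately. \emph{Part (i)} is classical functional analysis. For $m\ge2$ the space $\Sigma^m$ of \eqref{norm} is a Banach algebra, and $e^{i\theta H}$ — acting only in the $x$-variable and commuting with both $H$ and $\partial_y$ — is a $\Sigma^m$-isometry for every $m$; hence $u\mapsto F(\theta,u)$ is locally Lipschitz on $\Sigma^m$ with a Lipschitz constant uniform in $\theta$, and so is the average $F_{\rm av}$. Since \eqref{limeq} is the ODE $\partial_t\phi=-i\lambda F_{\rm av}(\phi)$ in the Banach space $\Sigma^{15}$, the Cauchy--Lipschitz theorem yields a unique maximal solution $\phi\in C^1([0,T_{\rm max}),\Sigma^{15})$, together with the blow-up alternative and continuous dependence on $\psi_0$. (The value $15$ is irrelevant here; any $m\ge2$ works.)

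For \emph{part (ii)} I would first record, for fixed $\eps$, local existence: writing \eqref{pde2} in Duhamel form with the unitary group generated by the self-adjoint linear operator on its right-hand side, and using once more that $\Sigma^{15}$ is an algebra, a contraction argument gives a maximal solution $\psi^\eps$ on some $[0,T^\eps)$ with the usual $\Sigma^{15}$-blow-up criterion — but both $T^\eps$ and the crude a priori bounds so obtained degenerate as $\eps\to0$ because of the $\mathcal O(1/\eps)$ term. I would then carry out the averaging at the level of the filtered unknown $\phi^\eps=e^{itH/\eps^2}\psi^\eps$, which solves \eqref{filteredPDE}, by iterating integration by parts in time on the singular term. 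Since $G_{\rm av}\equiv0$, the map $G(\theta,\cdot)$ has a $\tfrac{2\pi}{b}$-periodic (hence bounded) antiderivative $\Gamma(\theta,\cdot)$; substituting for $\partial_t\partial_y\phi^\eps$ via \eqref{filteredPDE}, the term $-\tfrac{ib}{\eps}\int_0^t G(\tfrac s{\eps^2},\partial_y\phi^\eps)\,ds$ is found to equal $\tfrac12\int_0^t\partial_y^2\phi^\eps\,ds$ plus a mean-zero oscillatory term plus $\mathcal O(\eps)$, and a further integration by parts on that oscillatory term (again exploiting zero mean) turns it into another $\mathcal O(\eps)$ remainder. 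The explicit $\tfrac12\int_0^t\partial_y^2\phi^\eps$ cancels the dispersive contribution of \eqref{filteredPDE}, leaving the non-dispersive identity $\phi^\eps(t)=\psi_0-i\lambda\int_0^t F(\tfrac s{\eps^2},\phi^\eps)\,ds+\eps\,\mathcal R^\eps(t)$, where $\mathcal R^\eps$ is an explicit functional of $\phi^\eps$ built from $\Gamma$, $G$, $F$, finitely many $\partial_y$-derivatives and finitely many harmonic-oscillator weights; equivalently, a corrector $\eta^\eps=\phi^\eps+\mathcal O(\eps)$ solves $i\partial_t\eta^\eps=\lambda F(\tfrac t{\eps^2},\phi^\eps)+\eps\,\mathcal R^\eps$. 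Keeping track of how many $\partial_y$-derivatives and $x$-weights $\mathcal R^\eps$ and the correctors cost is precisely what pins down the regularity threshold $\Sigma^{15}$.

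Next I would bootstrap. Fix $T<T_{\rm max}$, set $M:=2\sup_{[0,T]}\|\phi(t)\|_{\Sigma^{15}}+1$, and let $T_\ast^\eps$ be the largest time in $[0,\min(T,T^\eps)]$ with $\sup_{[0,T_\ast^\eps]}\|\phi^\eps\|_{\Sigma^{15}}\le M$. On this interval, $\Sigma^{15}$-energy estimates for the corrected equation — in which the dangerous $\mathcal O(1/\eps)$ contributions have been removed by the normal-form correction and in which the bound $\|\phi^\eps\|_{\Sigma^{15}}\le M$ is used to control $F$, $\mathcal R^\eps$ and the correctors — would give $\|\phi^\eps(t)\|_{\Sigma^{15}}\le C(M,T)+\mathcal O(\eps)$; for $\eps$ small this is $<M$, so a continuity argument forces $T_\ast^\eps=T$, hence $T^\eps>T$ and $\|\psi^\eps\|_{L^\infty([0,T],\Sigma^{15})}$ is bounded uniformly in $\eps$. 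This is the step I expect to be the main obstacle: because the correctors and the remainder $\mathcal R^\eps$ each lose derivatives and $x$-weights, closing the a priori estimate at a \emph{fixed} level $\Sigma^{15}$ forces one to exploit the structure of the normal form — in particular the self-adjointness of $G(\theta,\cdot)$ and of the operators built into $\Gamma$ — rather than estimating the remainder crudely, and it is exactly this bookkeeping that makes $15$ the natural threshold.

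Finally, subtracting the integral identity for $\phi^\eps$ from the integrated \eqref{limeq} and splitting $F(\tfrac s{\eps^2},\phi^\eps)-F_{\rm av}(\phi)=\bigl[F(\tfrac s{\eps^2},\phi^\eps)-F_{\rm av}(\phi^\eps)\bigr]+\bigl[F_{\rm av}(\phi^\eps)-F_{\rm av}(\phi)\bigr]$, one more integration by parts in time on the mean-zero first bracket — using its $\tfrac{2\pi}{b}$-periodic antiderivative together with the uniform $\Sigma^{15}$-bound, which controls $\partial_s\phi^\eps$ up to the $1/\eps$ absorbed by that antiderivative — shows the first bracket contributes $\mathcal O(\eps)$ in $L^2$, while the second is estimated by $(\|\phi^\eps\|_{L^\infty}^{2\sigma}+\|\phi\|_{L^\infty}^{2\sigma})\|\phi^\eps-\phi\|_{L^2}$ via the embedding $\Sigma^{15}\hookrightarrow L^\infty$ and the uniform bounds. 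A Gronwall inequality in $L^2$ then gives $\sup_{[0,T]}\|\phi^\eps-\phi\|_{L^2}\le C_T\eps$, which, after applying the unitary $e^{-itH/\eps^2}$, is the claimed estimate; the $\Sigma^{15}$-bound just established gives the last assertion of the theorem, and the regularity $\psi^\eps\in C([0,T],\Sigma^{15})\cap C^1([0,T],\Sigma^{13})$ follows from \eqref{pde2} since the linear operator there maps $\Sigma^{15}\to\Sigma^{13}$ and $F$ preserves the algebra.
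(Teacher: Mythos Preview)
Your outline of part~(i) and the averaging mechanics for the singular term are correct and match the paper: the zero-mean of $G$, the integration by parts producing a bounded periodic antiderivative, and the exact cancellation of $\tfrac12\int_0^t\partial_y^2\phi^\eps$ against the dispersive term are precisely what Section~\ref{sec:filter} does. Your final $L^2$ comparison is also fine, though the paper uses the splitting $[F(\tfrac s{\eps^2},\phi^\eps)-F(\tfrac s{\eps^2},\phi)]+[F(\tfrac s{\eps^2},\phi)-F_{\rm av}(\phi)]$, which puts $\phi$ rather than $\phi^\eps$ in the oscillatory bracket so that $\partial_s\phi$ is $\mathcal O(1)$ instead of $\mathcal O(1/\eps)$; your variant works as well.

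The genuine gap is your bootstrap. You propose to close a $\Sigma^{15}$ a priori bound on $\phi^\eps$ by energy-estimating the \emph{corrected} equation, hoping to recoup the derivative and weight losses in $\mathcal R^\eps$ via self-adjointness. But the paper's explicit remainders $G_4^\eps,G_5^\eps$ (see \eqref{G4}, \eqref{G5} and Proposition~\ref{GepsLem2}) already consume the full $\Sigma^{15}$ regularity of $\phi^\eps$ just to be bounded in $L^2$; bounding them in $\Sigma^{15}$ would require far more, and no structural cancellation is available. Even granting such a bound, your closure ``$\|\phi^\eps(t)\|_{\Sigma^{15}}\le C(M,T)+\mathcal O(\eps)<M$'' fails logically: nothing forces $C(M,T)<M$.

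The idea you are missing is that the paper obtains the uniform $\Sigma^{15}$ bound \emph{directly from the original Duhamel formula} \eqref{duhamel} for $\psi^\eps$, using that the propagator $e^{-itH_\eps/\eps^2}$ is bounded on $\Sigma^{15}$ \emph{uniformly in $\eps$} (Lemma~\ref{Ubound}). This rests on the nontrivial norm equivalence $\|\cdot\|_{\Sigma^m}\sim\|\cdot\|_{\Sigma^m_\eps}$ of Proposition~\ref{NormEquiv}. Combined with the Moser estimate $\|\lambda|\psi^\eps|^{2\sigma}\psi^\eps\|_{\Sigma^{15}}\lesssim\|\psi^\eps\|_{L^\infty}^{2\sigma}\|\psi^\eps\|_{\Sigma^{15}}$ (Lemma~\ref{Moser}), Gronwall gives \eqref{Sigma4bound} under an $L^\infty$ (not $\Sigma^{15}$) bootstrap assumption. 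That $L^\infty$ bootstrap is then closed at the very end by interpolating the $L^2$ convergence against the $\Sigma^{15}$ bound via Gagliardo--Nirenberg, see \eqref{RevEq1}--\eqref{LinfBound}. This decoupling --- $\Sigma^{15}$ control from the original equation, $L^2$ convergence from the filtered one --- is what makes the argument close at a fixed regularity level.
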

Here, the rather strong regularity assumption $\psi_0 \in \Sigma^{15}$ is a consequence of our analysis, which requires sufficient smoothness to retain the sharp 
$\mathcal{O}(\eps)$ asymptotic convergence rate between the true and the approximate solution. We will show how to lessen this requirement in our second theorem below. 
It is crucial for our analysis, however, that the vector potential $\beta(x_1)$ is an odd function. Without this fact, $G_{\rm av}(u) \not = 0$ (in general)
and the term $\propto \,\mathcal O(\tfrac{1}{\eps})$ does not converge in any sense as $\varepsilon \to 0$.

Equation \eqref{limeq} shows that strong magnetic fields suppress all dispersive effects in $\mathbb R^2$. However, there is a 
qualitative difference in the type of confinement with respect to the $x$- versus the $y$-direction. 
This becomes more apparent if, instead of a fixed coupling constant $\lambda\in \R$, 
we allow for a more general, inhomogeneous coupling given by some function $\lambda = \lambda(x_1, x_2)$ such that 
\[
\|\lambda \|_{L^\infty}= \mathcal O(1), \quad \text{as $\eps \to 0_+$.}
\] 
Using the same rescaling as before, we obtain, instead of \eqref{pde2}, the slightly more general NLS
\begin{equation}\label{pde2alt}
		i\partial_t \psi^\varepsilon = \frac{1}{\varepsilon^2}H\psi^\varepsilon - \frac{1}{2}\partial_{y}^2\psi^\varepsilon 
		- \frac{ib}{\varepsilon}x\partial_{y}\psi^\varepsilon + \lambda(\eps x, y)|\psi^\varepsilon|^{2\sigma}\psi^\varepsilon,\quad \psi^\eps_{\mid t=0}= \psi_0(x,y).
\end{equation}
Note that for $\eps>0$ the coupling function $\lambda(\eps x, y)$ 
does not commute with the action of the confinement Hamiltonian $H$. However, the fact that $\lambda$ is slowly varying in $x$ will allow us to 
obtain a similar effective model equation as before. 

\begin{theorem}\label{mainCor}
Let $\sigma \in \mathbb N$ and $\lambda \in W^{15, \infty}(\R^2)$. 

{\rm (i)} For all $\psi_0 \in \Sigma^2$, there exists $T_{\rm max} \in (0,\infty]$ and a unique maximal solution $\phi \in C^1([0, T_{\rm max}), \Sigma^2)$ to the effective model
	\begin{equation}\label{limeqalt2}
		i\partial_t \phi = \lambda(0,y) F_{\rm av}(\phi),\quad \phi_{\mid t=0} = \psi_0(x,y).
	\end{equation}

{\rm (ii)} For all $T \in (0, T_{\rm max})$, equation \eqref{pde2alt} 
has a unique solution $$\psi^\varepsilon \in C([0,T], \Sigma^2)\cap C^1([0,T],L^2(\R^2)),$$ 
which satisfies
	\begin{equation}\label{mainCorCon}
		\lim_{\eps\to 0}\left(\sup_{t \in [0,T]}\big \Vert \psi^{\varepsilon} (t,  \cdot)- e^{-itH/\varepsilon^2}\phi(t,  \cdot) \big \Vert_{L^2}\right) = 0.
	\end{equation}
Furthermore, $\Vert\psi^\varepsilon\Vert_{L^\infty([0,T], \Sigma^2)}$ is uniformly bounded with respect to $\varepsilon$.
\end{theorem}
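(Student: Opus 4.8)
Part (i) is a standard application of the Cauchy--Lipschitz theorem for ordinary differential equations in a Banach space. Since $\Sigma^2$ is a Banach algebra, since $e^{\pm i\theta H}$ act isometrically on every $\Sigma^m$ (because $H$ acts only in $x$, hence commutes with $\partial_y$), and since multiplication by $\lambda(0,\cdot)$ is bounded on $\Sigma^2$ with norm controlled by $\Vert\lambda\Vert_{W^{2,\infty}(\R^2)}$ --- here one uses that $\lambda(0,y)$ commutes with $H$, together with $W^{15,\infty}\hookrightarrow W^{2,\infty}$ --- the map $u\mapsto\lambda(0,y)F_{\rm av}(u)$ is locally Lipschitz from $\Sigma^2$ to $\Sigma^2$. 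Picard iteration then produces a unique maximal solution $\phi\in C([0,T_{\rm max}),\Sigma^2)$, continuous in $\psi_0$ and obeying the blow-up alternative; that $\phi\in C^1$ follows by reading $\partial_t\phi=-i\lambda(0,y)F_{\rm av}(\phi)\in C([0,T_{\rm max}),\Sigma^2)$ off the equation.

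For part (ii) we argue by comparison. For each fixed $\eps>0$, local well-posedness of \eqref{pde2alt} in $\Sigma^2$ is classical (cf.\ \cite{Mi}). Passing to the filtered unknown $\phi^\eps:=e^{itH/\eps^2}\psi^\eps$ one obtains, exactly as for \eqref{filteredPDE},
\[
i\partial_t\phi^\eps=-\tfrac12\partial_y^2\phi^\eps-\tfrac{ib}{\eps}G\!\left(\tfrac t{\eps^2},\partial_y\phi^\eps\right)+F^\eps\!\left(\tfrac t{\eps^2},\phi^\eps\right),\qquad \phi^\eps_{\mid t=0}=\psi_0,
\]
where $F^\eps(\theta,u):=e^{i\theta H}\big(\lambda(\eps x,y)\,|e^{-i\theta H}u|^{2\sigma}e^{-i\theta H}u\big)$. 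The key structural remark is that $F^\eps=F^0+R^\eps$, where
\[
F^0(\theta,u):=\lambda(0,y)\,e^{i\theta H}\big(|e^{-i\theta H}u|^{2\sigma}e^{-i\theta H}u\big)=\lambda(0,y)\,F(\theta,u),
\]
using that $\lambda(0,y)$ commutes with $e^{i\theta H}$; in particular $F^0$ is $\tfrac{2\pi}{b}$-periodic in $\theta$ with average $\lambda(0,y)F_{\rm av}$, which is precisely the right-hand side of \eqref{limeqalt2}. The remainder $R^\eps(\theta,u)=e^{i\theta H}\big((\lambda(\eps x,y)-\lambda(0,y))|e^{-i\theta H}u|^{2\sigma}e^{-i\theta H}u\big)$ is $\mathcal O(\eps)$: from $|\lambda(\eps x,y)-\lambda(0,y)|\le\eps|x|\,\Vert\partial_{x_1}\lambda\Vert_{L^\infty}$, the embedding $\Sigma^2\hookrightarrow L^\infty(\R^2)$ and $\Vert xu\Vert_{L^2}\le C\Vert u\Vert_{\Sigma^1}$ one gets $\Vert R^\eps(\theta,u)\Vert_{L^2}\le C\eps\,(1+\Vert u\Vert_{\Sigma^2})^{2\sigma+1}$, uniformly in $\theta$. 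The singular term $\propto\mathcal O(\tfrac1\eps)$ does not involve $\lambda$, so the analysis of Section~\ref{sec:filter} applies verbatim: since $G_{\rm av}=0$, one integration by parts turns $-\tfrac{ib}{\eps}\int_0^tG(\tfrac s{\eps^2},\partial_y\phi^\eps)\,ds$ into $\tfrac12\int_0^t\partial_y^2\phi^\eps\,ds+\rho^\eps$; cancelling the $\tfrac12\partial_y^2$ against the dispersion reduces \eqref{pde2alt} to $\phi^\eps(t)=\psi_0-i\int_0^tF^\eps(\tfrac s{\eps^2},\phi^\eps(s))\,ds+\rho^\eps(t)$. At the low regularity available here we only need --- and only get --- $\Vert\rho^\eps\Vert_{L^\infty([0,T],L^2)}=o(1)$, rather than the $\mathcal O(\eps)$ of Theorem~\ref{main}.

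Two uniform bounds feed the comparison. First, conservation of mass and energy for \eqref{pde2alt}, together with a Gronwall bootstrap exploiting the diamagnetic inequality and the fact that, fiberwise in the $y$-frequency, the linear part of \eqref{pde2alt} equals $\tfrac1{\eps^2}$ times a shifted harmonic oscillator, gives $\Vert\psi^\eps\Vert_{L^\infty([0,T],\Sigma^1)}\le C_T$ on any $[0,T]\subset[0,T_{\rm max})$, for $\eps$ small; feeding this into the $\Sigma^2$ version of the singular-term estimates of Section~\ref{sec:filter} upgrades it to $\Vert\phi^\eps\Vert_{L^\infty([0,T],\Sigma^2)}=\Vert\psi^\eps\Vert_{L^\infty([0,T],\Sigma^2)}\le C_T$. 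With these in hand, subtract the integrated form of \eqref{limeqalt2} from the reduced equation: $w^\eps:=\phi^\eps-\phi$ satisfies
\[
w^\eps(t)=-i\int_0^t\!\Big(\lambda(0,y)\big[F(\tfrac s{\eps^2},\phi^\eps)-F(\tfrac s{\eps^2},\phi)\big]+\lambda(0,y)\big[F(\tfrac s{\eps^2},\phi)-F_{\rm av}(\phi)\big]+R^\eps(\tfrac s{\eps^2},\phi^\eps)\Big)ds+\rho^\eps(t).
\]
On $[0,T]$ the uniform $\Sigma^2$ bounds and $\Sigma^2\hookrightarrow L^\infty$ make $F(\theta,\cdot)$ Lipschitz from $\Sigma^2$ into $L^2$, so the first term is $\le C_T\Vert w^\eps(s)\Vert_{L^2}$; the $R^\eps$-term is $\le C_T\eps$; and $\int_0^t\lambda(0,y)[F(\tfrac s{\eps^2},\phi(s))-F_{\rm av}(\phi(s))]\,ds\to0$ in $L^2$, uniformly on $[0,T]$, by the averaging lemma applied to the $\eps$-independent, $\tfrac{2\pi}{b}$-periodic, $\Sigma^2$-valued map $s\mapsto F(\cdot,\phi(s))-F_{\rm av}(\phi(s))$. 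Gronwall's inequality then yields $\sup_{t\in[0,T]}\Vert w^\eps(t)\Vert_{L^2}=o(1)$, which, since $e^{itH/\eps^2}$ is an $L^2$-isometry, is exactly \eqref{mainCorCon}. The uniform $\Sigma^2$ bound on $\psi^\eps$ is the one on $\phi^\eps$; it rules out blow-up before $T$, so $\psi^\eps\in C([0,T],\Sigma^2)$, and $\psi^\eps\in C^1([0,T],L^2)$ by reading $\partial_t\psi^\eps$ off \eqref{pde2alt}.

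The main obstacle is, as in Theorem~\ref{main}, the singular term: a direct differential energy estimate for $\phi^\eps$ in $\Sigma^2$ loses a factor $\tfrac1\eps$, because $[H,x\partial_y]=[H,x]\partial_y=-\partial_x\partial_y$ is not controlled by the dispersion-free flow. One must exploit, in integrated form, the cancellation $G_{\rm av}=0$ --- equivalently, the first step of the normal-form transformation of Section~\ref{sec:filter} --- both to close the uniform $\Sigma^2$ bound and to uncover the hidden $\tfrac12\partial_y^2$ that kills the $y$-dispersion; this is also where the convergence rate degrades from $\mathcal O(\eps)$ to $o(1)$ once one only assumes $\psi_0\in\Sigma^2$. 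By contrast, the genuinely new feature of Theorem~\ref{mainCor} --- the slowly varying coupling $\lambda(\eps x,y)$ --- is harmless: it is an $\mathcal O(\eps)$ perturbation in $L^2$, on $\Sigma^2$-bounded sets, of the homogeneous nonlinearity $\lambda(0,y)|u|^{2\sigma}u$, and its limit $\lambda(0,y)$ commutes with $H$, so the averaging structure of Section~\ref{sec:intro} is left intact.
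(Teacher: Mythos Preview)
Your treatment of part (i) and of the inhomogeneous coupling $\lambda(\eps x,y)$ is fine and matches the paper. The genuine gap is in part (ii), at the step where you assert
\[
\Vert\rho^\eps\Vert_{L^\infty([0,T],L^2)}=o(1)
\]
for $\psi_0\in\Sigma^2$. The decomposition of Section~\ref{sec:filter} is not ``one integration by parts'': after the first integration by parts and the substitution of $\partial_s\phi^\eps$ you are left with the double term $\int_0^t\mathcal G(\tfrac{s}{\eps^2},G(\tfrac{s}{\eps^2},\partial_y^2\phi^\eps))\,ds$, and it is precisely the spectral analysis of this term that uncovers the cancellation $-\tfrac{i}{2}\int_0^t\partial_y^2\phi^\eps\,ds$ \emph{and} simultaneously produces the remainders $G_4^\eps,G_5^\eps$. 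These carry oscillatory factors $e^{is(E_m-E_n)/\eps^2}$ with no prefactor of $\eps$, so the only route to smallness is a further integration by parts in $s$; that costs $\partial_s\phi^\eps$, which by \eqref{filteredPDE} contains the singular $\tfrac{1}{\eps}G$ again, and Proposition~\ref{GepsLem2} shows the resulting bound needs $\|\phi^\eps\|_{\Sigma^{15}}$. There is no ``$\Sigma^2$ version'' of that estimate; at $\Sigma^2$ regularity you cannot even show $G_4^\eps,G_5^\eps$ are bounded, let alone $o(1)$.

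The paper's remedy --- which is the main idea you are missing --- is to regularize the data: approximate $\psi_0\in\Sigma^2$ by $\psi_0^\eta\in\Sigma^{15}$ via Lemma~\ref{regLem}, run the full $\Sigma^{15}$ machinery of Theorem~\ref{main} on the solution $\phi^{\eps,\eta}$ with data $\psi_0^\eta$, and use the triangle inequality $\|\phi^\eps-\phi\|_{L^2}\le\|\phi^\eps-\phi^{\eps,\eta}\|_{L^2}+\|\phi^{\eps,\eta}-\phi\|_{L^2}$ together with continuous dependence. The bound $\|G_j^\eps(t,\phi^{\eps,\eta})\|_{L^2}\le\eps\,C(T,\|\psi_0^\eta\|_{\Sigma^{15}})\le\eps\,C(T,\eta^{-13/2}\|\psi_0\|_{\Sigma^2})$ blows up as $\eta\to0$, so one must fix $\eta$ small first and then $\eps$ small; this is exactly why the rate degrades from $\mathcal O(\eps)$ to $o(1)$. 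Your proposal attributes the loss of rate to the wrong mechanism. Separately, your route to the uniform $\Sigma^2$ bound (energy conservation, diamagnetic inequality, fiberwise shifted oscillator) is vague and circular as written; the paper instead uses the Duhamel formula with the propagator $e^{-itH_\eps/\eps^2}$, which is uniformly bounded on $\Sigma^m$ by Proposition~\ref{NormEquiv} and Lemma~\ref{Ubound}, together with the $L^\infty$ bootstrap of Section~\ref{sec:stab}.
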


Equation \eqref{limeqalt2} shows that nonlinear effects are only relevant along the $y$-axis. 
In particular, if the function $\lambda$ vanishes along this axis, the effective model for $\phi$ becomes trivial. We also see that by lowering the regularity of our initial data $\psi_0$, we lose the 
$\mathcal O(\eps)$ convergence rate and are only able to prove a limiting statement. The optimal regularity requirements for which one can retain an asymptotic rate remains an open problem at this point.
We also note that the condition $\lambda \in W^{15, \infty}(\R^2)$ is most likely far from optimal, but this requirement appears as an artifact of our analysis. Furthermore, this choice of $\lambda$ has no effect on the convergence rate of \eqref{mainCorCon}; one can achieve $\mathcal O(\eps)$-convergence if we assume $\psi_0 \in \Sigma^{15}$. We chose to provide this result in Theorem \ref{mainCor} rather than Theorem \ref{main} for simplicity of presentation.

Finally, we shall turn to the question of initial data which are initially concentrated in an eigenspace of the confinement Hamiltonian $H$. It turns out that in this case, the 
solution $\phi$ remains concentrated in this eigenspace. As a result, the effective 
model becomes a rather simple ordinary differential equation which can be solved explicitly.

\begin{corollary}\label{eigenThm}
Let $\phi$ be the solution to \eqref{limeqalt2} and suppose the initial data $\psi_0$ is given by $\psi_0(x,y) = \alpha_0(y) \chi_n(x)$, where $\alpha_0(y)\in \C$ is some given amplitude 
and $\chi_n$ is an eigenfunction to the $n$-th eigenvalue $E_n$ of $H$. Then, for all $t \in [0, T_{\rm max})$,
\begin{equation*}%\label{PnPhi}
\phi(t,x,y)= \alpha_0(y)\chi_n(x)e^{-it\omega_n(y)} ,
\end{equation*}
i.e. a time-periodic state with frequency $$\omega_n(y) := \lambda(0,y)|\alpha_0(y)|^{2\sigma}\|\chi_n\|_{L_x^{2\sigma+2}}^{2\sigma+2}.$$
\end{corollary}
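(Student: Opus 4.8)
The plan is to exhibit the asserted formula as a solution of the effective equation \eqref{limeqalt2} and then appeal to the uniqueness statement of Theorem~\ref{mainCor}~(i); the only genuinely new ingredient is the behaviour of $F_{\rm av}$ on functions concentrated in a single eigenspace of $H$. I would first show that, for every $g\in L^2(\R_y)$,
\begin{equation*}
F_{\rm av}\bigl(g(y)\chi_n(x)\bigr)=\|\chi_n\|_{L_x^{2\sigma+2}}^{2\sigma+2}\,|g(y)|^{2\sigma}g(y)\,\chi_n(x).
\end{equation*}
Indeed, since $H\chi_n=E_n\chi_n$, we have $e^{-i\theta H}(g\chi_n)=e^{-i\theta E_n}g\,\chi_n$, so $\bigl|e^{-i\theta H}(g\chi_n)\bigr|^{2\sigma}=|g|^{2\sigma}|\chi_n|^{2\sigma}$ is independent of $\theta$. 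Expanding $|\chi_n|^{2\sigma}\chi_n=\sum_{m\ge 0}c_m\chi_m$ in the Hermite basis (legitimate since $\chi_n$ is Schwartz), with $c_m=\langle|\chi_n|^{2\sigma}\chi_n,\chi_m\rangle_{L^2_x}$, and then applying $e^{i\theta H}$, the integrand in \eqref{eq:average} picks up the factor $e^{i\theta(E_m-E_n)}=e^{i\theta b(m-n)}$; averaging over one period kills every term with $m\neq n$ and leaves only $c_n=\langle|\chi_n|^{2\sigma}\chi_n,\chi_n\rangle_{L^2_x}=\int_\R|\chi_n|^{2\sigma+2}\,dx$. Interchanging the sum with the integral over the bounded interval $[0,2\pi/b]$ is justified by $L^2_x$-convergence of the expansion.

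Next I would look for a solution of \eqref{limeqalt2} with data $\psi_0=\alpha_0\chi_n$ in the form $\phi(t,x,y)=g(t,y)\chi_n(x)$. By the previous step this reduces the equation to the $y$-pointwise scalar ODE
\begin{equation*}
i\partial_t g(t,y)=c_n(y)\,|g(t,y)|^{2\sigma}g(t,y),\qquad c_n(y):=\lambda(0,y)\,\|\chi_n\|_{L_x^{2\sigma+2}}^{2\sigma+2},
\end{equation*}
with $g(0,y)=\alpha_0(y)$. Multiplying by $\overline{g}$ and taking real parts shows $\partial_t|g|^2\equiv0$, hence $|g(t,y)|=|\alpha_0(y)|$; the ODE then becomes linear in $g$ with the real, $t$-independent coefficient $c_n(y)|\alpha_0(y)|^{2\sigma}=\omega_n(y)$, and is solved by $g(t,y)=\alpha_0(y)e^{-it\omega_n(y)}$. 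This produces the candidate $\widetilde\phi(t,x,y):=\alpha_0(y)\chi_n(x)e^{-it\omega_n(y)}$.

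It then remains to verify that $\widetilde\phi$ is a bona fide solution in the class of Theorem~\ref{mainCor}~(i), so that uniqueness identifies it with $\phi$. The assumption $\psi_0=\alpha_0\chi_n\in\Sigma^2$ amounts to $\alpha_0\in H^2(\R_y)$ (recall that $\chi_n$ is Schwartz, hence in the domain of every power of $H$), and $\lambda\in W^{15,\infty}(\R^2)$ gives ample control of $\omega_n$ and of the few $y$-derivatives of it that appear. A direct computation of $H\widetilde\phi=E_n\widetilde\phi$, of $\partial_y^2\widetilde\phi$, and of $\partial_t\widetilde\phi=-i\omega_n(\cdot)\widetilde\phi$ then shows that $t\mapsto\widetilde\phi(t)$ and $t\mapsto\partial_t\widetilde\phi(t)$ are continuous maps from $[0,\infty)$ into $\Sigma^2$, with $\|\widetilde\phi(t)\|_{\Sigma^2}$ growing at most polynomially in $t$ (the only secular growth comes from the $y$-dependent phase, so no finite-time blow-up occurs). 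Together with the first step, $\partial_t\widetilde\phi=-i\omega_n\widetilde\phi=-i\lambda(0,y)F_{\rm av}(\widetilde\phi)$, so $\widetilde\phi\in C^1([0,\infty),\Sigma^2)$ is a classical solution of \eqref{limeqalt2}. By the uniqueness part of Theorem~\ref{mainCor}~(i) we conclude $\phi=\widetilde\phi$ on $[0,T_{\rm max})$, and maximality moreover forces $T_{\rm max}=\infty$. I expect Step~1 to be the main obstacle: everything afterwards is an elementary scalar ODE together with routine (if somewhat tedious) regularity bookkeeping, whereas the heart of the matter is the averaging-out of all off-diagonal contributions, which is precisely what makes $F_{\rm av}$ leave each eigenspace of $H$ invariant and act there merely as multiplication by $\|\chi_n\|_{L_x^{2\sigma+2}}^{2\sigma+2}|g|^{2\sigma}$.
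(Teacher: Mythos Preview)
Your proof is correct and complete in spirit, but it follows a genuinely different route from the paper's own argument in Section~\ref{sec:eigen}. The paper does \emph{not} construct the explicit candidate and invoke uniqueness; instead it works directly with the actual solution $\phi$, sets $w:=P_n^\perp\phi$, and shows via a Gr\"onwall argument that $\|w(t)\|_{L^2}\equiv 0$ on every $[0,T]\subset[0,T_{\rm max})$. The key step there is the identity $P_n^\perp F_{\rm av}(P_n\phi)=0$, which is exactly your averaging computation in disguise; combined with the Lipschitz estimate \eqref{LipEst} this yields $\|w(t)\|_{L^2}\le C(T)\int_0^t\|w(s)\|_{L^2}\,ds$, and hence $w\equiv 0$. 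Only after establishing $\phi(t)=\alpha(t,y)\chi_n(x)$ does the paper derive and solve the scalar ODE for $\alpha$, just as you do.

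Your approach is arguably more elementary: once $F_{\rm av}(g\chi_n)=\|\chi_n\|_{L_x^{2\sigma+2}}^{2\sigma+2}|g|^{2\sigma}g\,\chi_n$ is established, everything reduces to verifying a candidate and quoting uniqueness, with no need for the Gr\"onwall machinery on $P_n^\perp\phi$. The price is the regularity bookkeeping you flag --- showing $\widetilde\phi\in C^1([0,\infty),\Sigma^2)$ requires controlling two $y$-derivatives of $\alpha_0(y)e^{-it\omega_n(y)}$, and since $\omega_n$ itself depends on $\alpha_0$, this forces you to chase products like $(\partial_y\omega_n)^2\alpha_0$ and $(\partial_y^2\omega_n)\alpha_0$ through Sobolev in one dimension. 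The paper's route avoids this entirely, because it never leaves the solution $\phi$ whose $\Sigma^2$-regularity is already guaranteed by Proposition~\ref{wp2}. On the other hand, your argument yields the bonus conclusion $T_{\rm max}=\infty$, which the paper does not state.
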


The results above are derived using high-frequency averaging techniques similar to those in \cite{mehat, MeSp, dele, sparber}, the last two being 
the works most closely related to ours. 
Note however, that \cite{dele} deals with a 3D situation with combined electric and magnetic confinement, and 
considers self-consistent interactions via the Poisson equation (instead of power-law nonlinearities). In this case, the effective model 
is found to be an infinite system of nonlinearly coupled PDEs. An analogous system is obtained in our case if we 
decomposed the solution $\phi$ to \eqref{limeq} w.r.t. $x\in \R$ using the orthogonal basis of eigenfunctions $(\chi_n)_{n\in \mathbb N}$ of $H$.
  
When comparing  the present work with \cite{sparber}, we first note that we use a different gauge and henceforth a different (non-isotropic) scaling w.r.t. to $(x_1, x_2)\in \R^2$. 
One can transfer our Hamiltonian $\mathcal H$, given by \eqref{H} and \eqref{B}, into the one appearing in \cite{sparber} by conjugating 
\[
\mathcal H\mapsto e^{-i S_\eps} \, \mathcal H \mathcal \, e^{iS_\eps}, \quad \text{where $S_\eps (x_1, x_2)= \tfrac{b x_1 x_2}{2\eps^2} $}.
\]
In turn, this gauge transform maps $\psi^\eps \mapsto e^{-i S_\eps}\psi^\eps$. Doing so, however, results in 
initial data $\psi_0\mapsto e^{-i S_\eps}\psi_0$, which are highly oscillatory and no longer bounded in $\Sigma$ uniformly w.r.t. $\eps$. 
Such initial data are excluded by assumption in \cite{sparber} and hence our results cannot be inferred from those in \cite{sparber}, and vice versa. 
One should also note that the singular phase $S_\eps$ does not commute with the action of $H$. It is therefore not surprising that the resulting 
limiting model \eqref{limeq} is different from the one obtained in \cite{sparber} in that it projects 
onto a different spectral subspace of $L^2(\R^2)$. From a technical point of view, \cite{sparber} also does not need to deal with the appearance of the additional 
singular term $\propto \,\frac{1}{\eps}$. It may be of further interest to study the relationship between the choice of gauge 
and the resulting effective dynamics, but such an analysis is beyond the scope of this paper.

Finally, we recognize that the linear part of our model is given by a quadratic differential operator (in position and momentum), and thus 
some parts of our analysis may be replaced using classical-quantum correspondence theorems (Egorov-type results). 
However, our averaging techniques are in principle also applicable to non-quadratic differential operators and thus may apply to a wider class of 
magnetic Schr\"odinger equations than studied here.

\smallskip

The rest of this paper is now organized as follows: In Section \ref{sec:frame}, we collect several a-priori estimates and other technical results related to the functional framework. 
After that, we will derive appropriate (local) well-posedness results for both the original NLS and the limiting equations in Section \ref{sec:LWP}, proving item (i) of 
both Theorem \ref{main} and Theorem \ref{mainCor}. 
In Section \ref{sec:filter} we shall then rigorously derive the
asymptotic expansion for the solution of the filtered equation \eqref{filteredPDE}.
The nonlinear stability of the asymptotic approximation 
is then proved in Section \ref{sec:stab}, completing the proof of Theorem \ref{main}. In Section \ref{sec:stabCor}, we will 
demonstrate the additional steps needed to prove Theorem \ref{mainCor}. Finally, Corollary \ref{eigenThm} will be proved in Section \ref{sec:eigen} and we shall also 
present details of a somewhat technical result (concerning the equivalence of certain norms) in the appendix.

%%%%%%%%%%%%%%%%%%%%%%%%%%%%%%%%%%%%%%%%%%%

\section{\textbf{Functional Framework}}\label{sec:frame}

This section presents several technical results and estimates to be used later on. We begin by recalling the following lemma, which is proved under more generality in \cite{mehat}. 
Note that here and in the rest of the paper, we write
\[
f(u) \lesssim g(u) \text{  to mean } f(u) \leq Cg(u),
\]
where $C$ is some positive constant which does not depend on $u$ or $\eps$.

\begin{lemma}\label{1}
For all $u \in \Sigma^{m+1}$,
	\begin{equation*}
		\Vert H^{1/2}u\Vert^2_{\Sigma^m} + \Vert xu\Vert^2_{\Sigma^m}  + \Vert \partial_xu\Vert^2_{\Sigma^m} + \Vert \partial_yu\Vert^2_{\Sigma^m} \lesssim \Vert u\Vert^2_{\Sigma^{m+1}}.
	\end{equation*}
\end{lemma}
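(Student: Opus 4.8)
My plan would be to reduce the whole statement to a single one-dimensional spectral computation by simultaneously diagonalising $H$ and $\partial_y$. Since $H$ acts only in the $x$-variable and $\partial_y$ only in $y$, we have $[H,\partial_y]=0$, and for $u\in L^2(\R^2)$ I would expand in the orthonormal Hermite basis $(\chi_n)_{n\ge0}$ of $L^2(\R_x)$ and Fourier transform in $y$: writing $u(x,y)=\sum_{n\ge0}u_n(y)\chi_n(x)$ with $u_n(y)=\langle u(\cdot,y),\chi_n\rangle_{L^2_x}$, the spectral theorem for $H$ gives $H^{s}u=\sum_n E_n^{s}u_n\chi_n$ and $\partial_y^m u=\sum_n(\partial_y^m u_n)\chi_n$, whence by Plancherel in both variables
\begin{equation*}
\|u\|_{\Sigma^m}^2=\sum_{n\ge0}\int_\R\bigl(1+E_n^{m}+\eta^{2m}\bigr)\,|\hat u_n(\eta)|^2\,d\eta .
\end{equation*}
The task then becomes: write each of $H^{1/2}u$, $xu$, $\partial_x u$, $\partial_y u$ in this representation and bound the resulting Fourier--Hermite weight by a constant multiple of $1+E_n^{m+1}+\eta^{2m+2}$.

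Two elementary observations carry most of the load. First, from $H=-\tfrac12\partial_x^2+\tfrac12 b^2x^2$ one reads off $\|xv\|_{L^2}^2\le\tfrac2{b^2}\langle Hv,v\rangle=\tfrac2{b^2}\|H^{1/2}v\|_{L^2}^2$ and $\|\partial_x v\|_{L^2}^2\le2\|H^{1/2}v\|_{L^2}^2$; equivalently, in the Hermite basis $x\chi_n$ and $\partial_x\chi_n$ are linear combinations of $\chi_{n-1}$ and $\chi_{n+1}$ with coefficients of modulus $\lesssim\sqrt{n+1}\lesssim E_n^{1/2}$. Second, $[H,\partial_y]=0$ yields $\partial_y^m(xu)=x\,\partial_y^m u$, $\partial_y^m(\partial_x u)=\partial_x\,\partial_y^m u$ and $\partial_y^m(H^{1/2}u)=H^{1/2}\partial_y^m u$. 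With these, the ``easy'' contributions are immediate: the bare $L^2$-norms of $xu$, $\partial_x u$, $\partial_y u$, $H^{1/2}u$ are all $\lesssim\|H^{1/2}u\|_{L^2}^2\le\|u\|_{\Sigma^1}^2\le\|u\|_{\Sigma^{m+1}}^2$, the last inequality because $E_n\le1+E_n^{m+1}$ and $\eta^2\le1+\eta^{2m+2}$; the identities $\|H^{m/2}(H^{1/2}u)\|_{L^2}=\|H^{(m+1)/2}u\|_{L^2}$ and $\|\partial_y^m(\partial_y u)\|_{L^2}=\|\partial_y^{m+1}u\|_{L^2}$ are exact; and $\|H^{m/2}(xu)\|_{L^2}^2$, $\|H^{m/2}(\partial_x u)\|_{L^2}^2$, after expanding $xu$ (resp.\ $\partial_x u$) in the Hermite basis, applying $H^{m/2}$ coefficientwise, shifting the summation index by $\pm1$ and using $E_{n\pm1}\asymp E_n$, are bounded by $C\sum_n E_n^{m+1}\|u_n\|_{L^2_y}^2=C\|H^{(m+1)/2}u\|_{L^2}^2\le C\|u\|_{\Sigma^{m+1}}^2$.

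The one genuinely delicate point --- and the step I expect to be the main obstacle --- is the ``mixed'' contribution $\partial_y^m(xu)$, $\partial_y^m(\partial_x u)$, $\partial_y^m(H^{1/2}u)$, $H^{m/2}(\partial_y u)$: the $\Sigma^{m+1}$-norm controls only the pure quantities $H^{(m+1)/2}u$, $\partial_y^{m+1}u$ and $u$, so these terms of total order $m+1$ that genuinely mix the two variables must be interpolated between the pure ones. Here $[H,\partial_y]=0$ is essential, since it is exactly what makes the joint diagonalisation above legitimate. In the Fourier--Hermite picture, each of these terms has squared $L^2$-norm bounded (via the two observations above) by $\sum_{n\ge0}\int_\R E_n^{a}\eta^{2b}\,|\hat u_n(\eta)|^2\,d\eta$, with $(a,b)=(1,m)$ in the first three cases and $(a,b)=(m,1)$ in the last, and in both cases $a+b=m+1$. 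Young's inequality with exponents $p=(m+1)/a$ and $q=(m+1)/b$ then gives
\begin{equation*}
E_n^{a}\eta^{2b}\le\frac{a}{m+1}E_n^{m+1}+\frac{b}{m+1}\eta^{2m+2}\le E_n^{m+1}+\eta^{2m+2},
\end{equation*}
so each mixed term is $\lesssim\|H^{(m+1)/2}u\|_{L^2}^2+\|\partial_y^{m+1}u\|_{L^2}^2\le\|u\|_{\Sigma^{m+1}}^2$. Adding the contributions of the four operators finishes the proof. (Alternatively one could first establish the norm equivalence $\|u\|_{\Sigma^{m}}^2\asymp\sum_n\int_\R(1+E_n+\eta^2)^m|\hat u_n(\eta)|^2\,d\eta$ --- a statement of the kind deferred to the appendix --- after which the lemma is immediate by multiplying the symbol by a lower-order factor; but the one-sided estimates above already suffice.)
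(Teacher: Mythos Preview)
Your argument is correct and complete. The paper itself gives no self-contained proof here but merely cites \cite[Lemma~2.3]{mehat}, noting that in the present setting one may ignore the additional $y$-weights; your joint Fourier--Hermite diagonalisation together with Young's inequality on the mixed symbols $E_n^{a}\eta^{2b}$ (with $a+b=m+1$) is exactly the standard way to carry this out and is in the spirit of the cited reference.
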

The proof of this follows along the same lines as the one for \cite[Lemma 2.3]{mehat}. Note however, that in \cite{mehat}, the authors consider energy spaces and operators $H$ which include additional 
weights in the $y$-direction. In the present setting we can simply ignore these additional terms. 

For our purposes, it will be also be convenient to consider an $\eps$-dependent version of the 
energy-norm. Namely, we define
	\begin{equation*}\begin{split}%\label{equivNorm}
		\Vert u\Vert_{\Sigma_\varepsilon^m}^2 :=&\, \Vert u\Vert_{L^2}^2 + \Vert H_\varepsilon^{m/2}u\Vert_{L^2}^2 + \Vert \partial_{y}^mu\Vert_{L^2}^2, 
	\end{split}\end{equation*}
where
	\begin{equation}\label{Heps}
		H_\varepsilon := H - i\varepsilon bx\partial_y - \frac{\varepsilon^2}{2}\partial^2_y, \quad \text{for $\eps>0$}.
	\end{equation}
In the next lemma (analogous to \cite[Lemma 2.4]{dele}), we will show that $\Vert\cdot\Vert_{\Sigma_\varepsilon^m}$ is  indeed equivalent to $\Vert\cdot\Vert_{\Sigma^m}$, given  in \eqref{norm}:
\begin{proposition}\label{NormEquiv}
For all $m \in \N$ there exist $\varepsilon_m \in (0, 1]$ such that for all $\varepsilon \in (0, \varepsilon_m]$,
	\begin{equation*}
		\frac{1}{2}\Vert u\Vert_{\Sigma^m}^2 \leq \Vert u\Vert_{L^2}^2 + \Vert H_\varepsilon^{m/2}u\Vert_{L^2}^2 + \Vert \partial_y^{m}u\Vert_{L^2}^2 \leq 2\Vert u\Vert_{\Sigma^m}^2.
	\end{equation*}
\end{proposition}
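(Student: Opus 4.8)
The plan is to isolate the one quantity in which the two norms genuinely differ and show it is a relative $\mathcal O(\eps)$ perturbation. Writing $D_y = -i\partial_y$ and completing the square in the definition \eqref{Heps},
\[
H_\eps \;=\; -\tfrac12\partial_x^2 + \tfrac12\bigl(bx+\eps D_y\bigr)^2 \;=\; H + R_\eps,\qquad R_\eps := \eps b\, x D_y + \tfrac{\eps^2}{2}D_y^2 ,
\]
so that $H_\eps$ is self-adjoint and nonnegative, essentially self-adjoint on $\mathcal S(\R^2)$, and $R_\eps$ is a symmetric perturbation. Since the terms $\Vert u\Vert_{L^2}^2$ and $\Vert \partial_y^m u\Vert_{L^2}^2$ are common to $\Vert\cdot\Vert_{\Sigma_\eps^m}^2$ and $\Vert\cdot\Vert_{\Sigma^m}^2$, it suffices to prove
\[
\bigl|\, \Vert H_\eps^{m/2}u\Vert_{L^2}^2 - \Vert H^{m/2}u\Vert_{L^2}^2 \,\bigr| \;\le\; C_m\,\eps\,\Vert u\Vert_{\Sigma^m}^2 \qquad(\star)
\]
for all $u\in\mathcal S(\R^2)$ and all $\eps\in(0,1]$, with $C_m$ independent of $u$ and $\eps$; the proposition then follows by taking $\eps_m := \min\{1,(2C_m)^{-1}\}$ and extending to $\Sigma^m$ by density.

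The basic tool is an iterated form of Lemma \ref{1}: for any $l\in\N$ and any $Q_1,\dots,Q_l$ each chosen from $\{H^{1/2},x,\partial_x,\partial_y\}$ one has $\Vert Q_1\cdots Q_l u\Vert_{L^2} \lesssim \Vert u\Vert_{\Sigma^l}$, uniformly in $u$ (at the last step one also uses the elementary bound $\Vert xu\Vert_{L^2}+\Vert\partial_x u\Vert_{L^2}\lesssim\Vert H^{1/2}u\Vert_{L^2}$). Since $H = H^{1/2}H^{1/2}$ and $R_\eps = \eps b\, xD_y+\tfrac{\eps^2}{2}D_y^2$, any ordered product of $j$ copies of $H$ and $q$ copies of $R_\eps$ is a finite sum of products of $2(j+q)$ of the operators $Q_i$, each summand carrying a factor $\eps^q$ for $\eps\le 1$; hence such a product maps $\Sigma^{2(j+q)}\to L^2$ with norm $\lesssim \eps^q$, and $H^{1/2}$ times such a product maps $\Sigma^{2(j+q)+1}\to L^2$ with norm $\lesssim\eps^q$, all uniformly in $\eps\in(0,1]$.

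For even $m=2k$ I expand $H_\eps^{k} = (H+R_\eps)^{k}$ noncommutatively: the unique term with no factor $R_\eps$ equals $H^{k}=H^{m/2}$, and every other term carries at least one $R_\eps$, whence $\Vert H_\eps^{k}u - H^{k}u\Vert_{L^2}\lesssim\eps\Vert u\Vert_{\Sigma^m}$ and $\Vert H_\eps^{k}u\Vert_{L^2}\lesssim\Vert u\Vert_{\Sigma^m}$ by the previous paragraph. Polarizing, $\bigl|\Vert H_\eps^{k}u\Vert^2-\Vert H^{k}u\Vert^2\bigr| = \bigl|\langle H_\eps^{k}u-H^{k}u,\,H_\eps^{k}u+H^{k}u\rangle\bigr|\lesssim\eps\Vert u\Vert_{\Sigma^m}^2$, which is $(\star)$. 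For odd $m=2k+1$ a polynomial expansion of $H_\eps^{m/2}$ is not available, so I peel off one half-power and pass to a quadratic form: with $w := H_\eps^{k}u$,
\[
\Vert H_\eps^{m/2}u\Vert_{L^2}^2 = \langle H_\eps w, w\rangle = \Vert H^{1/2}w\Vert_{L^2}^2 + \langle R_\eps w, w\rangle .
\]
Expanding $H^{1/2}w = H^{1/2}(H+R_\eps)^{k}u$ as above, its leading term is $H^{(2k+1)/2}u = H^{m/2}u$ and the remainder is $\mathcal O(\eps)$ in $\Sigma^m\to L^2$, so $\bigl|\Vert H^{1/2}w\Vert^2-\Vert H^{m/2}u\Vert^2\bigr|\lesssim\eps\Vert u\Vert_{\Sigma^m}^2$ by polarization again; while, since $\Vert w\Vert_{\Sigma^1}\lesssim\Vert u\Vert_{\Sigma^{2k+1}}=\Vert u\Vert_{\Sigma^m}$ (estimate the three terms $\Vert w\Vert_{L^2},\Vert H^{1/2}w\Vert_{L^2},\Vert\partial_y w\Vert_{L^2}$ directly with the tool above), one has $|\langle R_\eps w, w\rangle| \le \eps b\,\Vert\partial_y w\Vert_{L^2}\Vert xw\Vert_{L^2} + \tfrac{\eps^2}{2}\Vert\partial_y w\Vert_{L^2}^2 \lesssim \eps\,\Vert w\Vert_{\Sigma^1}^2 \lesssim \eps\,\Vert u\Vert_{\Sigma^m}^2$. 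Adding the two contributions gives $(\star)$ for odd $m$ as well.

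The main obstacle is the odd-$m$ case. Because $H_\eps$ and $H$ do not commute, $H_\eps^{m/2}$ for odd $m$ is genuinely non-polynomial, and distributing the $m$ available orders of regularity symmetrically over the two slots of $\langle H_\eps^{m}u,u\rangle$ would demand $\Sigma^{(m\pm1)/2}$ on each side — half a unit too much on one of them; isolating $H_\eps^{1/2}$, converting it into $\langle H_\eps\,\cdot\,,\cdot\rangle$, and only then splitting is precisely what makes the regularity budget close at $\Sigma^m$. The second point requiring attention — and the whole reason the statement is nontrivial — is uniformity in $\eps$: one must verify that every constant coming from Lemma \ref{1} and from the finitely many terms of the above expansions is $\eps$-independent, which is exactly why the admissible range $(0,\eps_m]$ can be taken to depend only on $m$.
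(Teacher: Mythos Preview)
Your argument is correct and arrives at the same key estimate $(\star)$ as the paper, but the way you verify $(\star)$ is organized differently. The paper never splits on parity: it writes $\Vert H_\eps^{m/2}u\Vert^2-\Vert H^{m/2}u\Vert^2=\eps\langle \mathcal T_m u,u\rangle$ with $\mathcal T_m:=\eps^{-1}(H_\eps^m-H^m)$ for \emph{every} $m$, and then bounds the quadratic form $\langle \mathcal T_m u,u\rangle$ by induction, using the algebraic recursion $\mathcal T_m = H\mathcal T_{m-2}H+\mathcal T_1 H_\eps^{m-2}H+H_\eps^{m-1}\mathcal T_1$ together with the inductive hypothesis $\Vert H_\eps^{(m-1)/2}v\Vert\lesssim\Vert v\Vert_{\Sigma^{m-1}}$ and $\Vert H_\eps^{(m-2)/2}v\Vert\lesssim\Vert v\Vert_{\Sigma^{m-2}}$. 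You instead expand $(H+R_\eps)^k$ noncommutatively and control each term in norm, handling even $m$ directly and odd $m$ by peeling off one half-power via $\langle H_\eps w,w\rangle=\Vert H^{1/2}w\Vert^2+\langle R_\eps w,w\rangle$. Your route is a bit more hands-on and avoids carrying an induction hypothesis or discovering the recursion for $\mathcal T_m$; the paper's route is parity-blind and structurally cleaner, since the form identity absorbs the half-power issue from the outset rather than only in the odd case. Both ultimately rest on iterating Lemma~\ref{1} and on the self-adjointness of $H$ and $H_\eps$, and both spend exactly $\Sigma^m$ worth of regularity.
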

\begin{proof}
The proof of this result is somewhat lengthy. For the sake of presentation it will be given in Appendix A.
\end{proof}
With this equivalence in hand, we can now prove the following estimates, which will be used several times later on:
\begin{lemma}\label{Ubound}
Let $u \in \Sigma^m$. For all $t \in \mathbb{R}$ and $\varepsilon \in (0, \varepsilon_m]$, it holds
\[
\big \Vert e^{-itH}u \big \Vert_{\Sigma^m} = \Vert u \Vert_{\Sigma^m},\quad \big\Vert e^{i\frac{t}{2} \partial_{y}^2}u \big \Vert_{\Sigma^m} = \Vert u \Vert_{\Sigma^m},
\]
as well as
\[\big \Vert e^{-itH_\varepsilon}u \big \Vert_{\Sigma^m} \lesssim \Vert u \Vert_{\Sigma^m}.\]
\end{lemma}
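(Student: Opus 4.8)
The plan is to prove the three bounds separately, in each case exploiting the fact that the propagator in question commutes with (enough of) the operators $H^{m/2}$, $\partial_y^m$, $H_\varepsilon^{m/2}$ that enter the definitions of $\Vert\cdot\Vert_{\Sigma^m}$ and $\Vert\cdot\Vert_{\Sigma_\varepsilon^m}$. For the first identity I would observe that $e^{-itH}$ acts only in the $x$-variable and is a one-parameter unitary group on $L^2(\R^2)$, so $\Vert e^{-itH}u\Vert_{L^2}=\Vert u\Vert_{L^2}$; by the spectral theorem $H^{m/2}$ and $e^{-itH}$ are both functions of $H$ and therefore commute, giving $\Vert H^{m/2}e^{-itH}u\Vert_{L^2}=\Vert H^{m/2}u\Vert_{L^2}$; and $\partial_y$ commutes with $e^{-itH}$ (the latter being trivial in $y$, as is visible on the Fourier side in $y$), so $\Vert\partial_y^m e^{-itH}u\Vert_{L^2}=\Vert\partial_y^m u\Vert_{L^2}$. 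Summing the three contributions yields $\Vert e^{-itH}u\Vert_{\Sigma^m}=\Vert u\Vert_{\Sigma^m}$. The second identity is proved identically with the roles of $x$ and $y$ exchanged: $e^{it\partial_y^2/2}$ is unitary on $L^2$, commutes with $\partial_y^m$ (both functions of $\partial_y$), and commutes with $H^{m/2}$ because $H$ acts only in $x$.

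The third bound is the only one requiring real input, since $H_\varepsilon$ does not commute with $H$, so $H^{m/2}$ is no longer preserved by $e^{-itH_\varepsilon}$; this is where Proposition \ref{NormEquiv} is used. First, $H_\varepsilon$ is self-adjoint (it is $\varepsilon^2$ times the linear part of \eqref{pde2}, i.e. a rescaled magnetic Schr\"odinger operator), so $e^{-itH_\varepsilon}$ is unitary on $L^2$ and $\Vert e^{-itH_\varepsilon}u\Vert_{L^2}=\Vert u\Vert_{L^2}$. Next, $H_\varepsilon^{m/2}$ commutes with $e^{-itH_\varepsilon}$ by functional calculus, and $\partial_y$ commutes with $H_\varepsilon$ — it commutes separately with $H$, with $x\partial_y$, and with $\partial_y^2$ — hence with $e^{-itH_\varepsilon}$; therefore $\Vert H_\varepsilon^{m/2}e^{-itH_\varepsilon}u\Vert_{L^2}$ and $\Vert\partial_y^m e^{-itH_\varepsilon}u\Vert_{L^2}$ coincide with their values at $t=0$. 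Applying the lower bound of Proposition \ref{NormEquiv} to $e^{-itH_\varepsilon}u$ and then its upper bound to $u$ gives, for $\varepsilon\in(0,\varepsilon_m]$,
\[
\tfrac12\Vert e^{-itH_\varepsilon}u\Vert_{\Sigma^m}^2 \le \Vert u\Vert_{L^2}^2 + \Vert H_\varepsilon^{m/2}u\Vert_{L^2}^2 + \Vert\partial_y^m u\Vert_{L^2}^2 \le 2\Vert u\Vert_{\Sigma^m}^2,
\]
whence $\Vert e^{-itH_\varepsilon}u\Vert_{\Sigma^m}\le 2\Vert u\Vert_{\Sigma^m}$, which is the claimed estimate.

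The main point to be careful about is purely a matter of domains: the commutation identities and the finiteness of the norms should first be established for $u$ in a convenient core — e.g. finite linear combinations of Hermite functions in $x$ tensored with Schwartz functions in $y$ — on which all the operators involved are well defined and which is invariant (or at least mapped into $\Sigma^m$) under the relevant propagators, and then extended to all $u\in\Sigma^m$ by density using the bounds just derived. For the third bound this density argument also shows, a posteriori, that $e^{-itH_\varepsilon}u\in\Sigma^m$ whenever $u\in\Sigma^m$, which is implicitly required for the statement to make sense. None of this is delicate, but it is the only step that needs more than a line.
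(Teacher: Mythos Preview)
Your proof is correct and follows exactly the same approach as the paper: the first two identities come from the fact that $H$ and $\partial_y^2$ commute with every term in the $\Sigma^m$-norm, and the third from the fact that $H_\varepsilon$ commutes with every term in the $\Sigma_\varepsilon^m$-norm together with Proposition~\ref{NormEquiv}. Your version simply spells out the commutation arguments and the domain/density issue in more detail than the paper does.
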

\begin{proof}
Here the first line follows trivially from the fact that both $H$ and $\partial_{y}^2$ commute with each term in the $\Sigma^m$-norm, defined in \eqref{norm}. 
The second line follows from the fact that $H_\varepsilon$ commutes with each term in the $\Sigma^m_\varepsilon$-norm and Proposition \ref{NormEquiv} above.
\end{proof}

\begin{remark}
If we would have allowed for additional weights in the $y$-direction in definition \eqref{norm}, the operators $\partial_{y}^2$ and $H_\eps$ would no longer commute with each term in the $\Sigma^m$- and $\Sigma^m_\varepsilon$-norms, respectively, 
spoiling the uniform (in $\eps$) bounds obtained above. 
\end{remark}

As a final  preparatory step, we derive the following Moser-type estimate for our nonlinearity (with possibly $(x,y)$-dependent coupling function $\lambda$):
\begin{lemma}\label{Moser}
Let $m \in \N$. For all $\lambda \in W^{m,\infty}(\mathbb{R}^2)$ and $u \in \Sigma^m$,
	\begin{equation*}
		\Vert\lambda|u|^{2\sigma}u\Vert_{\Sigma^m} \lesssim \Vert \lambda\Vert_{W^{m,\infty}}\Vert u\Vert_{L^\infty}^{2\sigma}\Vert u\Vert_{\Sigma^m}.
	\end{equation*}
\end{lemma}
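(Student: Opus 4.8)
\textbf{Proof plan for Lemma \ref{Moser}.}

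The plan is to prove the estimate by an induction on $m$, using the fact that $\Sigma^m$ is built from three separate pieces: the $L^2$-norm, the $H^{m/2}$-piece, and the $\partial_y^m$-piece. Since the $\partial_y^m$-piece only involves the standard (commutative) derivative $\partial_y$, and $H^{m/2}$ is a differential operator of order $m$ in $x$ (with polynomial coefficients), the strategy is to reduce the control of $\|\lambda|u|^{2\sigma}u\|_{\Sigma^m}$ to the control of mixed derivatives $\partial_x^j \partial_y^k(\lambda|u|^{2\sigma}u)$ with $j+k\le m$, together with weight factors coming from the $b^2x^2$ part of $H$. First I would record that, by Lemma \ref{1} applied iteratively, $\|v\|_{\Sigma^m}^2$ is comparable to a finite sum of terms of the form $\|x^a\partial_x^b\partial_y^c v\|_{L^2}^2$ with $a+b+c\le m$ (this is the kind of equivalence referred to in the appendix, but at the level of the unweighted norm it is elementary and follows from commuting $x$ and $\partial_x$ through $H$); alternatively one can argue directly with $H^{m/2}$ by noting $H^{m/2}$ is a sum of operators $x^a\partial_x^b$ with $a+b\le m$ composed from $H=-\tfrac12\partial_x^2+\tfrac12 b^2x^2$.

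The core of the argument is then the classical Moser (Leibniz/Gagliardo--Nirenberg) inequality. For a fixed multi-index I would expand $\partial_x^{b}\partial_y^{c}(\lambda |u|^{2\sigma}u)$ by the Leibniz rule: each term is a product of some derivatives of $\lambda$ (of total order $\le m$, hence bounded in $L^\infty$ by $\|\lambda\|_{W^{m,\infty}}$) times a product of $2\sigma+1$ factors, each of which is a derivative of $u$ or $\bar u$ of some order, with the orders summing to at most $m$. Multiplying by the weight $x^a$ and distributing it onto one of these factors (the weight commutes with $\partial_y$ and can be absorbed into the $x$-derivative structure at the cost of lower-order terms already controlled inductively), I place the factor carrying the highest derivative count in $L^2$ and all remaining factors in $L^\infty$. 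The standard interpolation inequality
\[
\|x^{a_1}\partial_x^{b_1}\partial_y^{c_1}u\|_{L^2}\,\prod_{i\ge 2}\|x^{a_i}\partial_x^{b_i}\partial_y^{c_i}u\|_{L^\infty} \lesssim \|u\|_{L^\infty}^{2\sigma}\,\|u\|_{\Sigma^m},
\]
valid whenever $\sum_i(a_i+b_i+c_i)\le m$, then yields the claim for that term. Summing over the finitely many terms produced by Leibniz and over the finitely many $(a,b,c)$ with $a+b+c\le m$ gives the full estimate. The interpolation inequality itself I would justify by the Gagliardo--Nirenberg inequality in $\R^2$ together with the equivalence of $\|\cdot\|_{\Sigma^m}$ with the weighted-derivative norm above: each intermediate-order weighted derivative norm of $u$ interpolates between $\|u\|_{L^\infty}$ and $\|u\|_{\Sigma^m}$ with the correct exponents, and since exactly $2\sigma$ of the $2\sigma+1$ factors appear, the $\|u\|_{L^\infty}^{2\sigma}$ bookkeeping works out; this is exactly the content of \cite[Lemma 2.3 and the surrounding estimates]{mehat} adapted to drop the $y$-moments, so I would cite that.

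The main obstacle is purely notational/combinatorial: keeping track of the weights $x^a$ while commuting through $H^{m/2}$ and through the Leibniz expansion, and verifying that every term that arises has total order $\le m$ so that the Gagliardo--Nirenberg interpolation applies uniformly. There is no analytic subtlety beyond standard Moser estimates — the one point that needs a line of care is that distributing the weight $x^a$ onto a single factor of the product is legitimate up to commutator terms $[x^a,\partial_x^{b}]$, which are of strictly lower order and are therefore absorbed either by the induction hypothesis or by a term already counted with fewer $x$-derivatives. I would also note explicitly that $|u|^{2\sigma}u$ is a smooth ($C^\infty$ away from $0$, but polynomial-type in $u,\bar u$ since $\sigma\in\N$) function of $(u,\bar u)$, so the Leibniz rule applies with no regularity issue, and that the constant depends only on $m$, $\sigma$, and the dimension, not on $u$ or $\eps$.
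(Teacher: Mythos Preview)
Your overall strategy---reduce $\|\cdot\|_{\Sigma^m}$ to weighted-derivative pieces, expand the nonlinearity by Leibniz, then interpolate---is the right shape, but the interpolation step as you state it has a real gap. The displayed product inequality
\[
\|x^{a_1}\partial_x^{b_1}\partial_y^{c_1}u\|_{L^2}\,\prod_{i\ge 2}\|x^{a_i}\partial_x^{b_i}\partial_y^{c_i}u\|_{L^\infty}\ \lesssim\ \|u\|_{L^\infty}^{2\sigma}\,\|u\|_{\Sigma^m}
\]
is false in general. Already for $m=2$, $\sigma=1$, the Leibniz expansion of $\partial_y^2(|u|^2u)$ produces the term $u\,\partial_y u\,\partial_y\bar u$, and your $L^2\times L^\infty\times L^\infty$ splitting gives $\|\partial_y u\|_{L^2}\|\partial_y u\|_{L^\infty}\|u\|_{L^\infty}$ on the left. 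But for $u\in\Sigma^2$ one only has $\partial_y u\in\Sigma^1\subset H^1(\R^2)$, which does \emph{not} embed into $L^\infty$ in two dimensions; the left-hand side can be infinite while the right-hand side is finite. Putting all the lower-order factors in $L^\infty$ is too crude---no amount of Gagliardo--Nirenberg after the fact recovers from an infinite intermediate bound.

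The fix, which is exactly what the paper does, is to distribute the factors via H\"older into $L^{p_j}$ with $p_j=2k/k_j$ (where $k_j$ is the order landing on the $j$-th factor and $k=\sum_j k_j$), and \emph{then} apply the scale-invariant Gagliardo--Nirenberg inequality
\[
\|\partial^{k_j}u\|_{L^{2k/k_j}}\ \lesssim\ \|\partial^{k}u\|_{L^2}^{k_j/k}\,\|u\|_{L^\infty}^{1-k_j/k}
\]
to each factor; the exponents then combine to give precisely $\|u\|_{L^\infty}^{2\sigma}\|u\|_{\Sigma^m}$. The paper also bypasses your commutator/induction bookkeeping entirely: since $\|H^{m/2}v\|_{L^2}$ is equivalent to the ``extremal'' pieces $\|v\|_{L^2}+\|x^m v\|_{L^2}+\|\partial_x^m v\|_{L^2}$, it treats these separately. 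The pure weight $x^m$ passes straight through the product onto a single factor of $u$ with no Leibniz or interpolation at all, and the pure derivatives $\partial_x^m$, $\partial_y^m$ are handled by the H\"older--GN argument above. No mixed operators $x^a\partial_x^b$ ever need to act on the nonlinearity, so the commutator issue you flag simply does not arise.
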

\begin{proof}
This estimate can be proven by bounding each term in the $\Sigma^m$-norm separately. Clearly,
	\begin{equation*}
		\Vert\lambda|u|^{2\sigma}u\Vert_{L^2} \lesssim \Vert \lambda\Vert_{W^{m,\infty}}\Vert u\Vert_{L^\infty}^{2\sigma}\Vert u\Vert_{\Sigma^m}.
	\end{equation*}
For the terms involving $x^{m}$, one similarly has
	\begin{equation*}
		\Vert x^{m}\lambda|u|^{2\sigma}u\Vert_{L^2} \leq \Vert \lambda\Vert_{W^{m,\infty}}\Vert u\Vert_{L^\infty}^{2\sigma}\Vert x^{m}u\Vert_{L^2} \lesssim \Vert \lambda\Vert_{W^{m,\infty}}\Vert u\Vert_{L^\infty}^{2\sigma}\Vert u\Vert_{\Sigma^m},
\end{equation*}
which follows from Lemma \ref{1}.
The remaining derivative terms can be bounded with the following computation. First, by the Leibniz rule we have
	\begin{equation}\begin{aligned}\label{MosEq3}
		\Vert \partial_y^{m}(\lambda|u|^{2\sigma}u)\Vert_{L^2} &= \Big\Vert \sum_{k=0}^m {m \choose k}\partial_y^{m-k}\lambda\partial_y^{k}|u|^{2\sigma+1}\Big\Vert_{L^2}\\
		&\leq \sum_{k=0}^m {m \choose k}\Vert \lambda\Vert_{W^{m-k,\infty}}\Vert\partial_y^{k}|u|^{2\sigma+1}\Vert_{L^2}.
	\end{aligned}\end{equation}
Applying the Leibniz rule again to the part involving $u$ gives
	\begin{equation}\begin{split}\label{MosEq1}
		\Vert\partial_y^{k}|u|^{2\sigma+1}\Vert_{L^2} &= \Big\Vert\sum_{k_1+...+k_{2\sigma+1}=k}{k \choose {k_1,...,k_{2\sigma+1}}}\prod_{j=1}^{2\sigma+1}\partial_y^{k_j}|u|\Big\Vert_{L^2}\\
		&\leq \sum_{k_1+...+k_{2\sigma+1}=k}{k \choose {k_1,...,k_{2\sigma+1}}}\Big\Vert\prod_{j=1}^{2\sigma+1}\partial_y^{k_j}|u|\Big\Vert_{L^2},
	\end{split}\end{equation}
where the sum is over all combinations of non-negative integers $k_j$ such that $k_1+...+k_{2\sigma+1}=k$. We then apply H\"older's inequality:
	\begin{equation}
		\Big\Vert\prod_{j=1}^{2\sigma+1}\partial_y^{k_j}|u|\Big\Vert_{L^2} \leq \prod_{j=1}^{2\sigma+1}\Vert \partial_y^{k_j}|u|\Vert_{L^{2k/k_j}}.
	\end{equation}
Finally, we can apply a Gagliardo-Nirenberg inequality to obtain
	\begin{equation}\begin{split}\label{MosEq2}
		\prod_{j=1}^{2\sigma+1}\Vert \partial_y^{k_j}|u|\Vert_{L^{2k/k_j}} &\lesssim \prod_{j=1}^{2\sigma+1}\Vert \partial_y^{k}|u|\Vert_{L^2}^{k_j/k}\Vert u\Vert_{L^\infty}^{1-k_j/k}\\
		&\lesssim \prod_{j=1}^{2\sigma+1}\Vert u\Vert_{\Sigma^m}^{k_j/k}\Vert u\Vert_{L^\infty}^{1-k_j/k} = \Vert u\Vert_{\Sigma^m}\Vert u\Vert_{L^\infty}^{2\sigma}.
	\end{split}\end{equation}
In summary, (\ref{MosEq1})--(\ref{MosEq2}) yield $\Vert\partial_y^{k}|u|^{2\sigma+1}\Vert_{L^2} \lesssim \Vert u\Vert_{\Sigma^m}\Vert u\Vert_{L^\infty}^{2\sigma}$. Plugging this into (\ref{MosEq3}) and noting that $\Vert \lambda\Vert_{W^{m-k,\infty}} \lesssim \Vert \lambda\Vert_{W^{m,\infty}}$ gives
	\begin{equation*}
		\Vert \partial_y^{m}(\lambda|u|^{2\sigma}u)\Vert_{L^2} \lesssim \Vert \lambda\Vert_{W^{m,\infty}}\Vert u\Vert_{\Sigma^m}\Vert u\Vert_{L^\infty}^{2\sigma},
	\end{equation*}
completing the proof.
\end{proof}

\begin{corollary}\label{MoserCor}
For all $m \geq 2$,
	\begin{equation*}
		\Vert \lambda|u|^{2\sigma}u\Vert_{\Sigma^m} \lesssim \Vert \lambda \Vert_{W^{m,\infty}}\Vert u\Vert_{\Sigma^m}^{2\sigma+1}.
	\end{equation*}
\end{corollary}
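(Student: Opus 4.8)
The plan is to derive Corollary \ref{MoserCor} directly from Lemma \ref{Moser} by absorbing the $L^\infty$-norm into the $\Sigma^m$-norm via a Sobolev embedding. The key observation is that for $m \geq 2$, the space $\Sigma^m$ embeds continuously into $L^\infty(\R^2)$. Indeed, $u \in \Sigma^2$ controls $\|u\|_{L^2}$, $\|H u\|_{L^2}$ (hence $\|\partial_x^2 u\|_{L^2}$ and $\|x^2 u\|_{L^2}$, using Lemma \ref{1} or the definition of $H$), and $\|\partial_y^2 u\|_{L^2}$; in particular one gets control of $\|u\|_{H^2(\R^2)}$, and since $H^2(\R^2) \hookrightarrow L^\infty(\R^2)$ in two dimensions, we obtain $\|u\|_{L^\infty} \lesssim \|u\|_{\Sigma^2} \lesssim \|u\|_{\Sigma^m}$ for any $m \geq 2$.

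First I would record this embedding: for $m \geq 2$,
\[
\|u\|_{L^\infty} \lesssim \|u\|_{\Sigma^m}.
\]
This follows because $\|\partial_x^2 u\|_{L^2} + \|\partial_y^2 u\|_{L^2} + \|u\|_{L^2} \lesssim \|u\|_{\Sigma^2}$ (the $\partial_x^2$ bound coming from $\partial_x^2 = -2H + b^2 x^2$ together with $\|x^2 u\|_{L^2}\lesssim \|u\|_{\Sigma^2}$, e.g.\ via Lemma \ref{1} applied iteratively, or directly from the definition of $H$), so $u \in H^2(\R^2)$ with norm controlled by $\|u\|_{\Sigma^2}$, and then the standard Sobolev embedding $H^2(\R^2) \hookrightarrow L^\infty(\R^2)$ applies. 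Monotonicity of the $\Sigma^m$-norms in $m$ gives the claim for all $m \geq 2$.

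Then I would simply combine this with Lemma \ref{Moser}: for $u \in \Sigma^m$ with $m \geq 2$ and $\lambda \in W^{m,\infty}(\R^2)$,
\[
\|\lambda |u|^{2\sigma} u\|_{\Sigma^m} \lesssim \|\lambda\|_{W^{m,\infty}} \|u\|_{L^\infty}^{2\sigma} \|u\|_{\Sigma^m} \lesssim \|\lambda\|_{W^{m,\infty}} \|u\|_{\Sigma^m}^{2\sigma} \|u\|_{\Sigma^m} = \|\lambda\|_{W^{m,\infty}} \|u\|_{\Sigma^m}^{2\sigma+1},
\]
which is exactly the asserted estimate. Here the implied constant depends only on $m$, $\sigma$, and the Sobolev embedding constant, not on $u$, $\lambda$, or $\eps$.

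There is no serious obstacle here; the only point requiring minor care is justifying the two-dimensional Sobolev embedding $\Sigma^2 \hookrightarrow L^\infty$, i.e.\ checking that $\Sigma^2$ genuinely controls the full $H^2(\R^2)$-norm (both $x$- and $y$-derivatives as well as mixed ones). The mixed derivative $\partial_x\partial_y u$ is controlled since $\|\partial_x u\|_{\Sigma^1} \lesssim \|u\|_{\Sigma^2}$ by Lemma \ref{1}, and $\|\partial_y(\partial_x u)\|_{L^2} \leq \|\partial_x u\|_{\Sigma^1}$; this closes the estimate. Everything else is immediate from Lemma \ref{Moser}.
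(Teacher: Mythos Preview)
Your proposal is correct and follows essentially the same approach as the paper: apply Lemma \ref{Moser} and then absorb $\|u\|_{L^\infty}^{2\sigma}$ into $\|u\|_{\Sigma^m}^{2\sigma}$ via the continuous embedding $\Sigma^m \hookrightarrow L^\infty$ for $m \geq 2$. The paper's proof is a one-line appeal to this embedding, whereas you additionally spell out why $\Sigma^2$ controls the full $H^2(\R^2)$-norm; this extra justification is fine and not a departure from the paper's argument.
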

\begin{proof}
This follows immediately from Lemma \ref{Moser} and the fact that $\Sigma^m$ embeds continuously into $L^\infty$ for $m \geq 2$.
\end{proof}

%%%%%%%%%%%%%%%%%%%%%%%%%%%%%%%%%%%%%%%%%%%

\section{\textbf{Well-Posedness Results}}\label{sec:LWP}

In this section, we shall provide local well-posedness results for (\ref{pde2alt}) and (\ref{limeqalt2}) under sufficient generality for both Theorem \ref{main} and Theorem \ref{mainCor}.
For the remainder of this work, we shall always assume $\varepsilon \in (0, \varepsilon_{15}]$, where $\varepsilon_{15}$ is defined as in Proposition \ref{NormEquiv}.

\begin{proposition}\label{wp1}
Fix $\varepsilon > 0$, and assume $\psi_0 \in \Sigma^m$, $\lambda \in W^{m,\infty}$ with $m \geq 2$. Then, there exists $T^\varepsilon_{\rm max} \in (0,\infty]$ and a unique maximal solution 
\[\psi^\varepsilon \in C([0,T^\varepsilon_{\rm max}), \Sigma^{m})\cap C^1([0,T^\varepsilon_{\rm max}), \Sigma^{m-2})\] to \eqref{pde2alt}, 
depending continuously on the initial data $\psi_0$.
\end{proposition}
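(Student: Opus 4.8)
The plan is a standard contraction-mapping argument built on Duhamel's formula, carried out directly in the $\eps$-dependent energy space $\Sigma^m$. The starting point is the observation that the linear part of \eqref{pde2alt} is exactly $\tfrac{1}{\eps^2}H_\eps$, with $H_\eps$ as in \eqref{Heps}, since $\tfrac{1}{\eps^2}H_\eps=\tfrac{1}{\eps^2}H-\tfrac{ib}{\eps}x\partial_y-\tfrac12\partial_y^2$. The operator $\tfrac{1}{\eps^2}H_\eps$ is unitarily equivalent (via the rescaling used to pass from \eqref{pde1} to \eqref{pde2}) to the self-adjoint magnetic Hamiltonian $\mathcal H^\eps$, hence self-adjoint, and therefore generates a unitary group $\big(e^{-itH_\eps/\eps^2}\big)_{t\in\R}$ on $L^2(\R^2)$. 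Moreover $H_\eps$ commutes with both $H_\eps$ and $\partial_y$, so this group is an isometry for $\|\cdot\|_{\Sigma^m_\eps}$ and thus, by Proposition \ref{NormEquiv} (applicable since we assumed $\eps\le\eps_{15}\le\eps_m$), bounded on $\Sigma^m$, uniformly in $t$, with constant at most $2$ — this is the last line of Lemma \ref{Ubound}. We then look for a mild solution
\[
\psi^\eps(t)=e^{-itH_\eps/\eps^2}\psi_0-i\int_0^t e^{-i(t-s)H_\eps/\eps^2}\,N\!\big(\psi^\eps(s)\big)\,ds,\qquad N(u):=\lambda(\eps x,y)\,|u|^{2\sigma}u,
\]
as a fixed point of the associated map $\Phi$ on the closed ball $X_{T,R}:=\{u\in C([0,T],\Sigma^m):\|u\|_{L^\infty([0,T],\Sigma^m)}\le R\}$, equipped with the $C([0,T],\Sigma^m)$-metric.

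The key inputs are two nonlinear estimates showing that $N:\Sigma^m\to\Sigma^m$ is well defined and locally Lipschitz. For the mapping property, note that $\|\lambda(\eps\cdot,\cdot)\|_{W^{m,\infty}}\le\|\lambda\|_{W^{m,\infty}}$ because $\eps\le 1$, so Corollary \ref{MoserCor} (using $m\ge2$ and $\Sigma^m\hookrightarrow L^\infty$) gives $\|N(u)\|_{\Sigma^m}\lesssim\|\lambda\|_{W^{m,\infty}}\|u\|_{\Sigma^m}^{2\sigma+1}$. For the Lipschitz bound, since $\sigma\in\N$ the map $z\mapsto|z|^{2\sigma}z$ is a polynomial in $(z,\bar z)$, so $|u|^{2\sigma}u-|v|^{2\sigma}v$ is a finite sum of products of $2\sigma+1$ factors drawn from $\{u,\bar u,v,\bar v\}$, exactly one of which is $u-v$ or $\bar u-\bar v$; using that $\Sigma^m$ is a Banach algebra for $m\ge2$, that $\|\bar w\|_{\Sigma^m}=\|w\|_{\Sigma^m}$, and that $\lambda(\eps\cdot,\cdot)\in W^{m,\infty}$ acts as a bounded multiplier on $\Sigma^m$ (again by the Leibniz/Moser argument of Lemma \ref{Moser}), one obtains
\[
\|N(u)-N(v)\|_{\Sigma^m}\lesssim\|\lambda\|_{W^{m,\infty}}\big(\|u\|_{\Sigma^m}^{2\sigma}+\|v\|_{\Sigma^m}^{2\sigma}\big)\,\|u-v\|_{\Sigma^m}.
\]

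With these at hand, choose $R:=4\|\psi_0\|_{\Sigma^m}$. Combining the bound $\|e^{-i\cdot H_\eps/\eps^2}\|_{\Sigma^m\to\Sigma^m}\le 2$ with the two estimates above shows that for $T=T^\eps>0$ small enough (depending on $R$, $\|\lambda\|_{W^{m,\infty}}$ and $\eps$), $\Phi$ maps $X_{T,R}$ into itself and is a contraction there; Banach's fixed point theorem yields a unique $\psi^\eps\in C([0,T^\eps],\Sigma^m)$ solving the Duhamel equation. A standard continuation procedure then produces a maximal existence time $T^\eps_{\rm max}\in(0,\infty]$ together with the blow-up alternative $\limsup_{t\uparrow T^\eps_{\rm max}}\|\psi^\eps(t)\|_{\Sigma^m}=\infty$ whenever $T^\eps_{\rm max}<\infty$, and continuous dependence on $\psi_0$ follows from the same estimates via Gronwall's inequality.

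It remains to upgrade the mild solution to a classical one and to read off the $C^1$-regularity. Since $\psi_0\in\Sigma^m\subset\Sigma^2\subset D(H_\eps)$ and $t\mapsto N(\psi^\eps(t))$ is continuous with values in $\Sigma^m$, standard semigroup theory (a mild solution with $D(H_\eps)$-valued data and a continuous $\Sigma^m$-valued inhomogeneity is a strong solution) shows that $\psi^\eps$ solves \eqref{pde2alt} in $\Sigma^{m-2}$ and that $\partial_t\psi^\eps=-i\big(\tfrac1{\eps^2}H_\eps\psi^\eps+N(\psi^\eps)\big)$. Iterating Lemma \ref{1} twice yields $\|H_\eps u\|_{\Sigma^{m-2}}\le C_\eps\|u\|_{\Sigma^m}$, by bounding $\partial_x^2 u$, $x^2u$, $x\partial_y u$ and $\partial_y^2 u$ in $\Sigma^{m-2}$ by $\|u\|_{\Sigma^m}$; hence the right-hand side above lies in $C([0,T^\eps_{\rm max}),\Sigma^{m-2})$ and $\psi^\eps\in C^1([0,T^\eps_{\rm max}),\Sigma^{m-2})$. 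The only genuinely technical points are the two Moser-type estimates for $N$ in the anisotropic norm $\Sigma^m$ (which reduce to Corollary \ref{MoserCor} and the algebra property) and the uniform-in-$t$ boundedness of the linear flow on $\Sigma^m$ (Lemma \ref{Ubound}); I expect no serious obstacle beyond careful bookkeeping with the $\Sigma^m$-norms.
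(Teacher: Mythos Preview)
Your proposal is correct and follows essentially the same route as the paper's proof: Duhamel formulation with propagator $e^{-itH_\eps/\eps^2}$, contraction on a ball in $C([0,T],\Sigma^m)$ using Lemma \ref{Ubound} and Corollary \ref{MoserCor}, standard continuation to a maximal time with blow-up alternative, and $C^1$-regularity read off from the equation via Lemma \ref{1}. You are in fact slightly more explicit than the paper in two places (the Lipschitz estimate for $N$ via the algebra property, and the semigroup justification that the mild solution is strong), but these are elaborations of the same argument rather than a different approach.
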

\begin{proof}
The Duhamel formulation of (\ref{pde2alt}) is
	\begin{equation}\label{duhamel}\begin{aligned}
		\psi^\varepsilon(t)&\, = e^{-itH_\varepsilon/\varepsilon^2}\psi_0 - i\int_0^te^{-i(t-s)H_\varepsilon/\varepsilon^2}\lambda(\varepsilon x, y)|\psi^\varepsilon(s)|^{2\sigma}\psi^\varepsilon(s)ds\\
		& \, := \Psi(\psi^\varepsilon)(t),
	\end{aligned}\end{equation}
where, in view of \eqref{Heps}, we have
\[
e^{-itH_\varepsilon/\varepsilon^2}= e^{-it(H/\varepsilon^2 - \frac{1}{2}\partial^2_{y} - ibx\partial_{y}/\varepsilon)}.
\] 
We shall prove that for $T^\varepsilon > 0$ small enough, $\Psi$ is a contraction on a suitable ball $B_R(0) \subset C([0,T^\varepsilon], \Sigma^m)$. A fixed point argument then proves existence of 
a unique solution in $C([0,T^\varepsilon], \Sigma^m)$, and continuous dependence on initial data follows easily via Gr\"onwall's inequality. 
To obtain the contraction property we use the fact that $e^{-itH_\varepsilon/\varepsilon^2}$ is bounded on $\Sigma^m$ (see Lemma \ref{Ubound}) to infer
\begin{equation*}\begin{aligned}
		\Vert\Psi(\psi^\varepsilon)(t)\Vert_{\Sigma^m} &\leq \Vert e^{-itH_\varepsilon/\varepsilon^2} \psi_0\Vert_{\Sigma^m} + \int_0^t\Vert e^{-i(t-s)H_\varepsilon/\varepsilon^2} \lambda(\varepsilon x, y)|\psi^\varepsilon(s)|^{2\sigma}\psi^\varepsilon(s)\Vert_{\Sigma^m}ds\\
		&\lesssim \Vert \psi_0\Vert_{\Sigma^m} + \int_0^t\Vert \lambda(\varepsilon x, y)|\psi^\varepsilon(s)|^{2\sigma}\psi^\varepsilon(s)\Vert_{\Sigma^m}ds\\
		&\lesssim \Vert \psi_0\Vert_{\Sigma^m} + \int_0^t\Vert \lambda\Vert_{W^{m,\infty}}\Vert \psi^\varepsilon(s)\Vert_{\Sigma^m}^{2\sigma+1}ds,
	\end{aligned}\end{equation*}
where the last line follows from Corollary \ref{MoserCor}. In view of this estimate, if we have existence at $T^\varepsilon$ then we can iterate this method to 
obtain existence up to some $\widetilde{T^\varepsilon} > T^\varepsilon$. Thus, there must exist a maximal $T^\varepsilon_{\rm max} \in (0,\infty]$ for which we have local well-posedness in 
$C([0,T^\varepsilon_{max}), \Sigma^m)$. Furthermore if $T^\varepsilon_{\rm max} < \infty$ we must have
	\begin{equation}\label{blowup}
		\lim_{t\to T^\varepsilon_{\rm max}} \Vert\psi^\varepsilon(t)\Vert_{\Sigma^m} = \infty.
	\end{equation}
Finally, given a solution $\psi^\varepsilon \in C([0,T^\varepsilon_{\rm max}), \Sigma^m)$ to (\ref{pde2alt}), Lemma \ref{1} also yields
	\begin{equation}\begin{aligned}\label{wpEst1}
		\sup_{t\in[0,T^\varepsilon]}\Vert \partial_t\psi^\varepsilon\Vert_{\Sigma^{m-2}} &\leq \sup_{t\in[0,T^\varepsilon]} \Big(\frac{1}{\varepsilon^2}\Vert H\psi^\varepsilon\Vert_{\Sigma^{m-2}} + \frac{1}{2}\Vert\partial_{y}^2\psi^\varepsilon\Vert_{\Sigma^{m-2}}\\
		&\quad\quad\quad\quad+ \frac{b}{\varepsilon}\Vert x\partial_{y}\psi^\varepsilon\Vert_{\Sigma^{m-2}} + \Vert\lambda(\varepsilon x, y)|\psi^\varepsilon|^{2\sigma}\psi^\varepsilon\Vert_{\Sigma^{m-2}}\Big)\\
		&\lesssim (1+\frac{1}{\varepsilon^2})\sup_{t\in[0,T^\varepsilon]} \Big(\Vert \psi^\varepsilon\Vert_{\Sigma^m}+\Vert \psi^\varepsilon\Vert_{\Sigma^m}^{2\sigma+1}\Big) < \infty
	\end{aligned}\end{equation}
for all $T^\varepsilon < T^\varepsilon_{\rm max}$ and for each fixed $\varepsilon$. Thus, $\psi^\varepsilon  \in C^1([0,T^\varepsilon_{\rm max}), \Sigma^{m-2})$.
\end{proof}

\begin{remark}
Since $\psi_0$ is independent of $\varepsilon$, we could in principle bound $\| \psi^\eps(t, \cdot)\|_{\Sigma^m}$ up to some $\varepsilon$-independent $T>0$. 
This would give well-posedness in $C([0,T], \Sigma^m)$ uniformly in $\varepsilon$, and henceforth eliminate the possibility that $T^\varepsilon_{\rm max} \to 0$ as $\varepsilon \to 0_+$. 
However, we will later need well-posedness in $C([0,T^\varepsilon_{\rm max}), \Sigma^{m})\cap C^1([0,T^\varepsilon_{\rm max}), \Sigma^{m-2})$, which cannot be obtained uniformly in $\eps$ with the above techniques.
Eventually, all of this will be overcome by the fact that we shall prove $T^\varepsilon_{\rm max} \geq T_{\rm max}$, i.e. the $\varepsilon$-independent maximal existence time of the 
limiting model \eqref{limeqalt2}.
\end{remark}

Next, we shall turn to the limiting model \eqref{limeqalt2} and establish its local well-posedness, and thus prove item (i) of Theorem \ref{main} and Theorem \ref{mainCor}.

\begin{proposition}\label{wp2}
Let $\psi_0 \in \Sigma^m$ and $\lambda \in W^{m,\infty}$ with $m \geq 2$. Then, there exists $T_{\rm max} > 0$ and a unique maximal solution $\phi \in C^1([0,T_{\rm max}), \Sigma^{m})$ 
to \eqref{limeqalt2}, 
depending continuously on the initial data $\psi_0$.
\end{proposition}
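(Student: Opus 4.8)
The plan is to run a standard fixed-point argument on the Duhamel formulation of \eqref{limeqalt2}, exactly mirroring the proof of Proposition \ref{wp1} but without the free-propagator factors (since the linear part of \eqref{limeqalt2} is trivial). Writing the mild formulation
\[
\phi(t) = \psi_0 - i\int_0^t \lambda(0,y)\, F_{\rm av}(\phi(s))\, ds =: \Phi(\phi)(t),
\]
I would work in the Banach space $X_{T} = C([0,T],\Sigma^m)$ and show that $\Phi$ maps a ball $B_R(0)\subset X_T$ into itself and is a contraction there for $T$ small enough. The two ingredients needed are: (a) a bound $\|F_{\rm av}(u)\|_{\Sigma^m}\lesssim \|\lambda\|_{W^{m,\infty}}\|u\|_{\Sigma^m}^{2\sigma+1}$, and (b) a local Lipschitz bound $\|F_{\rm av}(u)-F_{\rm av}(v)\|_{\Sigma^m}\lesssim C(\|u\|_{\Sigma^m},\|v\|_{\Sigma^m})\,\|u-v\|_{\Sigma^m}$. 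For (a), I would use the representation \eqref{eq:average}: since $F_{\rm av}(u)$ is an average over $\theta\in[0,\tfrac{2\pi}{b}]$ of $e^{i\theta H}(|e^{-i\theta H}u|^{2\sigma}e^{-i\theta H}u)$, and $e^{\pm i\theta H}$ is an isometry on every $\Sigma^m$ uniformly in $\theta$ by Lemma \ref{Ubound}, Minkowski's integral inequality reduces the estimate to a pointwise-in-$\theta$ bound on $\|\,|w|^{2\sigma}w\,\|_{\Sigma^m}$ with $w = e^{-i\theta H}u$, which is exactly Corollary \ref{MoserCor} (the extra constant factor $\lambda(0,y)$ is harmless as $\lambda\in W^{m,\infty}$, and multiplication by $\lambda(0,y)$ is bounded on $\Sigma^m$ since it only involves $y$-derivatives up to order $m$). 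The Sobolev embedding $\Sigma^2\hookrightarrow L^\infty$ for $m\ge 2$ is what makes the nonlinearity locally Lipschitz on $\Sigma^m$.

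For (b) I would write the difference $|u|^{2\sigma}u - |v|^{2\sigma}v$ as a telescoping sum and, after conjugating by $e^{\pm i\theta H}$ and averaging as above, apply the same Moser/Gagliardo–Nirenberg machinery underlying Lemma \ref{Moser} (or simply the Banach-algebra property of $\Sigma^m$ for $m\ge 2$, together with $\Sigma^m\hookrightarrow L^\infty$) to obtain the desired polynomial-in-norms Lipschitz constant. With (a) and (b) in hand, choosing $R = 2\|\psi_0\|_{\Sigma^m}$ and $T$ small depending on $R$ and $\|\lambda\|_{W^{m,\infty}}$ gives a unique fixed point in $B_R(0)\subset X_T$; a standard continuation argument then produces a maximal existence time $T_{\rm max}\in(0,\infty]$, with the blow-up alternative $\lim_{t\to T_{\rm max}}\|\phi(t)\|_{\Sigma^m}=\infty$ if $T_{\rm max}<\infty$, and continuous dependence on $\psi_0$ follows from Grönwall applied to the integral inequality satisfied by $\|\phi_1(t)-\phi_2(t)\|_{\Sigma^m}$. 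Finally, since the right-hand side $-i\lambda(0,y)F_{\rm av}(\phi)$ of \eqref{limeqalt2} lies in $C([0,T_{\rm max}),\Sigma^m)$ along the solution (again by (a) and the continuity of $t\mapsto\phi(t)$ in $\Sigma^m$), the equation itself shows $\phi\in C^1([0,T_{\rm max}),\Sigma^m)$ — note that here, unlike in Proposition \ref{wp1}, there is no loss of two derivatives because the linear part is absent.

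I do not expect a genuine obstacle: the only mildly delicate point is verifying that the averaging operation $F\mapsto F_{\rm av}$ does not destroy the $\Sigma^m$ bounds, i.e. that one may interchange the $\theta$-integral with the $\Sigma^m$-norm. This is handled cleanly by Minkowski's inequality for integrals once one observes — as is implicit in the paragraph following \eqref{eq:F} — that $\theta\mapsto F(\theta,u)$ is continuous (indeed periodic) with values in $\Sigma^m$, so the Bochner integral in \eqref{eq:average} is well-defined in $\Sigma^m$ and $\|F_{\rm av}(u)\|_{\Sigma^m}\le \tfrac{b}{2\pi}\int_0^{2\pi/b}\|F(\theta,u)\|_{\Sigma^m}\,d\theta$. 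Everything else is a routine contraction-mapping and continuation argument of exactly the type already carried out in the proof of Proposition \ref{wp1}.
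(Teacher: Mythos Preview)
Your proposal is correct and follows essentially the same route as the paper's proof: both write the Duhamel map $\Phi(\phi)(t)=\psi_0-i\int_0^t\lambda(0,y)F_{\rm av}(\phi(s))\,ds$, reduce the $\Sigma^m$-bound on the nonlinearity to Corollary~\ref{MoserCor} via Lemma~\ref{Ubound} and Minkowski's inequality over $\theta$, run the contraction/continuation argument, and deduce $C^1$-regularity in $\Sigma^m$ directly from the equation (no loss of derivatives since the linear part is absent). Your write-up is simply more explicit than the paper's about the Lipschitz estimate and the Bochner-integral justification, but there is no substantive difference in strategy.
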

\begin{proof}
Integrating \eqref{limeqalt2} w.r.t. $t$ yields
	\begin{equation}\label{limeqDu}
		\phi(t) = \psi_0 - i\int_0^t\lambda(0,y)F_{\rm av}(\phi(s))ds := \Phi(\phi)(t).
	\end{equation}
As in the proof of the previous result, we can use a fixed point argument on the solution map to obtain existence in $C([0,T_{\rm max}), \Sigma^m)$. This follows from the estimate
	\begin{equation*}\begin{aligned}
		\Vert\Phi(\phi)(t)\Vert_{\Sigma^m} &\leq \Vert \psi_0\Vert_{\Sigma^m} +  \int_0^t\Vert \lambda(0,y)F_{\rm av}(\phi(s))\Vert_{\Sigma^m}ds\\
		&\leq \Vert \psi_0\Vert_{\Sigma^m} +  \int_0^t\frac{b}{2\pi}\int_0^{\frac{2\pi}{b}}\left\Vert \lambda(0,y)e^{i\theta H}\left(|e^{-i\theta H}\phi(s)|^{2\sigma}e^{-i\theta H}\phi(s)\right)\right\Vert_{\Sigma^m}d\theta ds\\
		&\lesssim \Vert \psi_0\Vert_{\Sigma^m} +  \int_0^t\Vert \lambda\Vert_{W^{m,\infty}}\Vert \phi(s)\Vert_{\Sigma^m}^{2\sigma+1}ds,
	\end{aligned}\end{equation*}
where we have again used Corollary \ref{MoserCor}.The fact that $\partial_t\phi \in \Sigma^{m}$ then follows from the estimate
\[
	\Vert \partial_t\phi(t)\Vert_{\Sigma^m} \lesssim \Vert \lambda\Vert_{W^{m,\infty}}\Vert \phi(t)\Vert_{\Sigma^m}^{2\sigma+1}.
\]
\end{proof}

Propositions \ref{wp1} and \ref{wp2} cover well-posedness for both $\psi_0 \in \Sigma^{15}$ and $\psi_0 \in \Sigma^2$. However, to prove item (ii) in Theorem \ref{mainCor} we will need to approximate $\Sigma^2$-solutions with $\Sigma^{15}$-solutions. In light of the continuous dependence on initial data, this can be achieved by approximating $\psi_0 \in \Sigma^2$ with a sequence of regularized initial 
$(\psi_0^\eta)_{\eta\ge 0} \subset \Sigma^{15}$. We show how this can be done in the following lemma:
%%%%%
\begin{lemma}\label{regLem}
Suppose $\psi_0 \in \Sigma^2$. Let $\eta > 0$, $m \in \N$, and denote 
	\[
	\psi^{\eta}_0 = (1 + \eta H)^{-m/2}(1 - \eta \partial^2_y)^{-m/2}\psi_0.
	\] 
	Then, $\psi^{\eta}_0 \in \Sigma^{2+m}$ and we have the following estimates:
	\begin{equation}\label{lemreg1}
		\Vert \psi^{\eta}_0\Vert_{\Sigma^2} \leq \Vert \psi_0\Vert_{\Sigma^2},\quad \Vert \psi^{\eta}_0\Vert_{\Sigma^{2+m}} \lesssim (1 + \eta^{-m/2})\Vert \psi_0\Vert_{\Sigma^2},
	\end{equation}
as well  as
	\begin{equation}\label{lemreg3}
		\lim_{\eta \to 0}\Vert \psi^{\eta}_0 - \psi_0\Vert_{\Sigma^{2}} = 0.
	\end{equation}
\end{lemma}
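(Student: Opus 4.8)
The strategy is to exploit that the regularizing operators $R_\eta := (1+\eta H)^{-m/2}(1-\eta\partial_y^2)^{-m/2}$ are built from functions of the commuting self-adjoint operators $H$ and $-\partial_y^2$, so everything can be analyzed via the (joint) spectral calculus. First I would record the two elementary scalar facts that drive the whole argument: for $r\ge 0$ and $\eta>0$,
\[
0 < (1+\eta r)^{-m/2} \le 1, \qquad r^{k/2}(1+\eta r)^{-m/2} \le \eta^{-k/2} \quad (0\le k\le m),
\]
the second coming from $r^{k/2}(1+\eta r)^{-m/2}\le r^{k/2}(\eta r)^{-k/2}=\eta^{-k/2}$. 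The same bounds hold with $r=-\partial_y^2$ replaced by its spectral variable. Since $H$, $-\partial_y^2$ and the identity are the three operators appearing in the definition \eqref{norm} of $\|\cdot\|_{\Sigma^k}$, and each commutes with $R_\eta$, functional calculus immediately gives $\|R_\eta u\|_{\Sigma^k}\le \|u\|_{\Sigma^k}$ for every $k$ and, more importantly, the smoothing estimate $\|R_\eta u\|_{\Sigma^{k+m}}\lesssim (1+\eta^{-m/2})\|u\|_{\Sigma^k}$: one distributes the extra factors $H^{(k+m)/2}$ and $\partial_y^{k+m}$ across the two resolvent powers, using the first scalar bound on one factor and the second (with the exponent $\le m$) on the other. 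Applying this with $k=2$ yields $\psi_0^\eta\in\Sigma^{2+m}$ together with both estimates in \eqref{lemreg1}.

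For the convergence \eqref{lemreg3}, the cleanest route is again spectral: write $\psi_0^\eta-\psi_0 = (R_\eta - I)\psi_0$ and note that on the joint spectrum the multiplier $(1+\eta r_1)^{-m/2}(1+\eta r_2)^{-m/2}-1$ is bounded by $2$ in modulus and tends to $0$ pointwise as $\eta\to 0$. To transfer this to the $\Sigma^2$-norm, apply it not to $\psi_0$ directly but to each of $\psi_0$, $H\psi_0$ (via $H^{1/2}\cdot H^{1/2}$, more precisely to $H^{2/2}\psi_0$ in the $L^2$ sense needed for the $\|H^{m/2}u\|$ term), and $\partial_y^2\psi_0$, all of which lie in $L^2$ because $\psi_0\in\Sigma^2$; since $R_\eta$ commutes with $H$ and $\partial_y^2$, we get
\[
\|\psi_0^\eta-\psi_0\|_{\Sigma^2}^2 = \|(R_\eta-I)\psi_0\|_{L^2}^2 + \|(R_\eta-I)H\psi_0\|_{L^2}^2 + \|(R_\eta-I)\partial_y^2\psi_0\|_{L^2}^2,
\]
and each term goes to $0$ by the dominated convergence theorem applied to the corresponding spectral measure (dominating function the constant $4$ times the finite spectral mass). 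Summing gives \eqref{lemreg3}.

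The only genuinely delicate point is making the joint functional calculus of $H$ and $-\partial_y^2$ rigorous, i.e.\ justifying that these two unbounded self-adjoint operators on $L^2(\R^2)$ strongly commute and therefore admit a joint spectral resolution against which all the above manipulations are legitimate. This is standard once one notes that $H$ acts only in $x$ and $-\partial_y^2$ only in $y$, so on the tensor product $L^2(\R_x)\otimes L^2(\R_y)$ their resolvents commute; alternatively one can simply diagonalize $-\partial_y^2$ by the Fourier transform in $y$ and $H$ by the Hermite basis $(\chi_n)$ in $x$, reducing every estimate above to the scalar inequalities on the sequence/integral of eigenvalues $\{E_n\}\times\R_{\xi\ge 0}$. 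Apart from that bookkeeping, the proof is entirely routine, and the slightly awkward shift by $2$ in the indices (from $\Sigma^2$ up to $\Sigma^{2+m}$) is handled automatically by carrying the extra factors $H, \partial_y^2$ along, never needing more than two derivatives' worth of regularity from $\psi_0$ itself.
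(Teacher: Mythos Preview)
Your proposal is correct and follows essentially the same approach as the paper: both use the scalar bounds $(1+\eta r)^{-m/2}\le 1$ and $r^{m/2}(1+\eta r)^{-m/2}\le \eta^{-m/2}$ together with commutativity of $R_\eta$ with $H$ and $\partial_y^2$ to get \eqref{lemreg1}, and both prove \eqref{lemreg3} by applying dominated convergence to the three pieces $\psi_0$, $H\psi_0$, $\partial_y^2\psi_0$. The only cosmetic difference is that the paper carries out the diagonalization explicitly via the Hermite basis in $x$ and the Fourier transform in $y$, which is precisely the concrete realization of the joint spectral calculus you invoke (and mention yourself as the alternative).
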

\begin{proof}
To prove the first  inequality of \eqref{lemreg1}, we merely note that the operators $(1 + \eta H)^{-m/2}$ and
$(1 - \eta \partial^2_y)^{-m/2}$ are bounded by $1$ on $L^2(\R^2)$ and they commute with each term in the $\Sigma^{2}$-norm. To get the second inequality in \eqref{lemreg1}, we estimate
	\begin{equation*}\begin{split}
		\Vert H^{(2+m)/2}\psi^{\eta}_0\Vert_{L^2} &= \Vert H^{(2+m)/2}(1 + \eta H)^{-m/2}(1 - \eta \partial^2_y)^{-m/2}\psi_0\Vert_{L^2}\\
		&\leq \Vert H^{m/2}(1 + \eta H)^{-m/2}(H\psi_0)\Vert_{L^2}.
	\end{split}\end{equation*}
One can easily see that for any $z > 0$: $z^{m/2}(1 + \eta z)^{-m/2} \leq \eta^{-m/2}$.
Therefore, we have
	\begin{equation}\label{reg1}
		\Vert H^{(2+m)/2}\psi^{\eta}_0\Vert_{L^2} \leq \eta^{-m/2}\Vert H\psi_0\Vert_{L^2} \lesssim \eta^{-m/2}\Vert \psi_0\Vert_{\Sigma^2},
	\end{equation}
and similarly
	\begin{equation}\label{reg2}
		\Vert (-\partial^2_y)^{(2+m)/2}\psi^{\eta}_0\Vert_{L^2} \lesssim \eta^{-m/2}\Vert \psi_0\Vert_{\Sigma^2}.
	\end{equation}
Combining (\ref{reg1}) and (\ref{reg2}), and keeping in mind the definition of the $\Sigma^2$-norm, establishes the second inequality of \eqref{lemreg1}.

Finally, we shall prove (\ref{lemreg3}): let $u \in L^2(\mathbb{R}^2)$ and $\chi_n=\chi_n(x)$ be the $n$-{th} eigenfunction of the harmonic oscillator $H$. In the following, we 
denote by ${\langle \cdot, \cdot\rangle}_{L_x^2}$  the inner product in $L^2(\mathbb{R}_x)$. Moreover, 
$$\mathcal F: u(x,y) \mapsto \hat{u}(x,\xi)$$ denotes the 1D-Fourier transform with respect to $y\in \R$. Then, by Plancherel and the fact that $[\mathcal F, H]=0$:
	\begin{equation*}\begin{aligned}
		&\Vert u - (1 + \eta H)^{-m/2}(1 - \eta \partial^2_y)^{-m/2}u\Vert_{L^2}^2 \\
		& \ = \Vert (1 - (1 + \eta H)^{-m/2}(1 + \eta \xi^2)^{-m/2})\hat{u}\Vert_{L^2}^2\\
		&\ = \sum_{n=0}^\infty\int_{\mathbb{R}} \left|(1 - (1 + \eta E_n)^{-m/2}(1 + \eta \xi^2)^{-m/2}) {\langle \hat{u}, \chi_n \rangle}_{L_x^2} \right|^2d\xi.
	\end{aligned}\end{equation*}
Note that $|1 - (1 + \eta E_n)^{-m/2}(1 + \eta \xi^2)^{-m/2}| \leq 1$ for all $n \geq 0$, $\xi \in \mathbb{R}$, and $\eta > 0$. Moreover,
	\begin{equation*}
		\sum_{n=0}^\infty\int_{\mathbb{R}}| {\langle \hat{u}, \chi_n \rangle}_{L_x^2}|^2d\xi = \Vert u\Vert_{L^2}^2 < \infty.
	\end{equation*}
Thus, by the dominated convergence theorem we have
	\begin{equation}\begin{aligned}\label{reg3}
		&\lim_{\eta \to 0}\Vert u - (1 + \eta H)^{-m/2}(1 - \eta \partial^2_y)^{-m/2}u\Vert_{L^2}^2 \\
		&\, = \sum_{n=0}^\infty\int_{\mathbb{R}} \lim_{\eta \to 0}\left |(1 - (1 + \eta E_n)^{-m/2}(1 + \eta \xi^2)^{-m/2}) {\langle \hat{u}, \chi_n \rangle}_{L_x^2} \right |^2d\xi = 0.
	\end{aligned}\end{equation}
Applying (\ref{reg3}) to, respectively, $u_1 = \psi_0$, $u_2=H\psi_0,$ and $u_3=\partial^2_y\psi_0$ completes the proof of the lemma.
\end{proof}

%%%%%%%%%%%%%%%%%%%%%%%%%%%%%%%%%%%%%%%%%%%%%%
%%%%%%%%%%%%%%%%%%%%%%%%%%%%%%%%%%%%%%%%%%

\section{\textbf{Analysis of the filtered solution}}\label{sec:filter}

In this section, we shall rigorously analyze the asymptotic behavior of the filtered unknown $\phi^\eps=e^{itH/\varepsilon^2}\psi^\varepsilon$, as $\eps\to 0_+$. 
Recall that $\phi^\eps$ satisfies equation \eqref{filteredPDE} and thus, for all $t\in[0,T^{\eps}_{\rm max})$:
	\begin{equation}\begin{aligned}\label{filteredPDE2}
		\phi^\eps(t) &= \psi_0 + \frac{i}{2}\int_0^t\partial_{y}^2\phi^\eps(s)ds - \frac{b}{\varepsilon}\int_0^tG\left(\frac{s}{\varepsilon^2}, \partial_{y}\phi^\eps(s)\right)ds\\
		&\quad\quad\quad\quad- i\int_0^t\lambda F\left(\frac{s}{\varepsilon^2}, \phi^\eps(s)\right)ds.
	\end{aligned}\end{equation}
Our main task will be to show that
	\begin{equation}\label{task}
		-\frac{b}{\varepsilon}\int_0^tG\left(\frac{s}{\varepsilon^2}, \partial_{y}\phi^\eps(s)\right)ds = -\frac{i}{2}\int_0^t\partial^2_{y}\phi^\eps(s)ds + \sum_{j=1}^5G^\varepsilon_j(t),
	\end{equation}
where the $G^\varepsilon_j(t)$, $j=1, \dots, 5,$ will be defined below. In a second step, we shall  
prove that $G^\varepsilon_j(t) = \mathcal{O}(\eps)$ in the $L^2(\R^2)$-norm, uniformly on compact time-intervals.
\begin{remark}
Equation \eqref{task} may seem miraculous, and indeed we did not expect such a result at first. We already demonstrated that $G_{av}(u) = 0$, so clearly one must compute the higher order terms in $\eps$ in order to understand the convergence of the l.h.s. of \eqref{task}. It is then natural to use integration by parts to expand the integral, and after several substitutions we arrive at \eqref{task}.
\end{remark}

\subsection{Asymptotic expansion of the singular term}
We first note the differential identity
	\begin{equation}\label{ident}
		G\left(\frac{s}{\varepsilon^2}, \partial_{y}\phi^\eps(s)\right) = \varepsilon^2\partial_s\Big(\mathcal G\left(\frac{s}{\varepsilon^2}, \partial_{y}\phi^\eps(s)\right)\Big) - \varepsilon^2\mathcal G\left(\frac{s}{\varepsilon^2}, \partial_{y}\partial_s\phi^\eps(s)\right),
	\end{equation}
where here, and in the following, we denote
	\begin{equation}\label{Gtilde}
		\mathcal G\left(\theta, u\right) := \int_0^\theta G(\tau, u)\, d\tau.
	\end{equation}
Using \eqref{ident} to rewrite the third term on the r.h.s. of \eqref{filteredPDE2}, we find
	\begin{equation*}\begin{aligned}
		-\frac{b}{\varepsilon}\int_0^tG\left(\frac{s}{\varepsilon^2}, \partial_{y}\phi^\eps(s)\right)ds  = G^\varepsilon_1(t) + \varepsilon b\int_0^t\mathcal G\left(\frac{s}{\varepsilon^2}, \partial_{y}\partial_s\phi^\eps(s)\right)ds,
	\end{aligned}\end{equation*}
with 
	\begin{equation}\begin{split}\label{G1}
		G^\varepsilon_1(t) := -\varepsilon b\mathcal G\left(\frac{t}{\varepsilon^2}, \partial_{y}\phi^\eps(t)\right).
	\end{split}\end{equation}	
By substituting equation \eqref{filteredPDE} for $\partial_s\phi^\eps(s)$, we further see that
	\begin{equation}\begin{aligned}\label{GstepA}
		\varepsilon b\int_0^t\mathcal G\left(\frac{s}{\varepsilon^2}, \partial_{y}\partial_s\phi^\eps(s)\right)ds &= \varepsilon b\int_0^t\mathcal G\left(\frac{s}{\varepsilon^2}, \frac{1}{2}i\partial_{y}^3\phi^\eps(s)\right)ds\\
		&\quad\quad+ \varepsilon b\int_0^t\mathcal G\left(\frac{s}{\varepsilon^2}, -\frac{b}{\varepsilon}G\left(\frac{s}{\varepsilon^2}, \partial^2_{y}\phi^\eps(s)\right)\right)ds\\
		&\quad\quad+ \varepsilon b\int_0^t\mathcal G\left(\frac{s}{\varepsilon^2}, -i\partial_{y}\lambda F\left(\frac{s}{\varepsilon^2}, \phi^\eps(s)\right)\right)ds\\
		&= G_2^\varepsilon(t) + G_3^\varepsilon(t) - b^2\int_0^t\mathcal G\left(\frac{s}{\varepsilon^2}, G\left(\frac{s}{\varepsilon^2}, \partial^2_{y}\phi^\eps(s)\right)\right)ds,
	\end{aligned}\end{equation}
where we define
	\begin{equation}\begin{split}\label{G2}
		G^\varepsilon_2(t) := \varepsilon b\int_0^t\mathcal G\left(\frac{s}{\varepsilon^2}, \frac{1}{2}i\partial_{y}^3\phi^\eps(s)\right)ds
	\end{split}\end{equation}	
and
	\begin{equation}\begin{split}\label{G3}
		G^\varepsilon_3(t) := \varepsilon b\int_0^t\mathcal G\left(\frac{s}{\varepsilon^2}, -i\partial_{y}\lambda F\left(\frac{s}{\varepsilon^2}, \phi^\eps(s)\right)\right)ds.
	\end{split}\end{equation}	
Next, we shall compute the remaining term in \eqref{GstepA} by decomposing it over the normalized eigenfunctions of $H$,
\[
\chi_n(x) = \frac{1}{\sqrt{2^nn!}}\left(\frac{b}{\pi}\right)^{1/4}e^{-\frac{1}{2}bx^2}P_n(\sqrt{b}x), \quad n\in \N_0,
\]
where $P_n(z) = (-1)^ne^{z^2}\partial^n_z(e^{-z^2})$ are the Hermite polynomials.
Let
	\begin{equation*}
		u_n^\eps (t,y)= \langle \phi^\eps(t, \cdot, y), \chi_n\rangle_{L^2_x}, \quad v_{n,m} = \langle x\chi_n, \chi_m\rangle_{L^2_x}\in \C.
		\end{equation*}
Returning to the last term in \eqref{GstepA}, we can expand
	\begin{equation*}\begin{aligned}
		&\int_0^t\mathcal G\Big(\frac{s}{\varepsilon^2}, G\left(\frac{s}{\varepsilon^2}, \partial^2_{y}\phi^\eps(s)\Big)\right)ds\\
		&= \sum_n\sum_{m\neq n}\sum_{k\neq m}\int_0^t\int_0^{s/\varepsilon^2}e^{i\tau (E_k-E_m)}e^{is (E_m-E_n)/\varepsilon^2}v_{m,k}v_{n,m}\partial^2_{y}u_n^\eps(s)\chi_k \, d\tau ds,
	\end{aligned}\end{equation*}
and computing the integral in $\tau$ yields
	\begin{equation*}\begin{aligned}
		&-b^2\int_0^t\mathcal G\Big(\frac{s}{\varepsilon^2}, G\left(\frac{s}{\varepsilon^2}, \partial^2_{y}\phi^\eps(s)\Big)\right)ds\\
		&= G^\varepsilon_4(t) + ib^2\sum_n\sum_{m\neq n}\sum_{k\neq m}\frac{1}{E_k-E_m}\int_0^te^{is (E_k-E_n)/\varepsilon^2}v_{m,k}v_{n,m}\partial^2_{y}u_n^\eps(s)\chi_k\, ds,
	\end{aligned}\end{equation*}
where
	\begin{equation}\label{G4}
		G^\varepsilon_4(t) :=  -ib^2\sum_n\sum_{m\neq n}\sum_{k\neq m}\frac{1}{E_k-E_m}\int_0^te^{is (E_m-E_n)/\varepsilon^2}v_{m,k}v_{n,m}\partial^2_{y}u_n^\eps(s)\chi_k\, ds.
	\end{equation}	
The remaining term is then split over sums with $k= n$ and $k \neq n$, yielding
	\begin{equation}\begin{aligned}\label{GstepB}
		& ib^2\sum_n\sum_{m\neq n}\sum_{k\neq m}\frac{1}{E_k-E_m}\int_0^te^{is (E_k-E_n)/\varepsilon^2}v_{m,k}v_{n,m}\partial^2_{y}u_n^\eps(s)\chi_k\, ds \\
		&= G^\varepsilon_5(t) + ib^2\sum_n\sum_{m\neq n}\frac{1}{E_n-E_m}\int_0^t|v_{n,m}|^2\partial^2_{y}u_n^\eps(s)\chi_nds,
	\end{aligned}\end{equation}
where
	\begin{equation}\label{G5}
		G^\varepsilon_5(t) :=  ib^2\sum_n \sum_{m\neq n}\sum_{\substack{k\neq m\\ k\neq n}}\frac{1}{E_k-E_m}\int_0^te^{is (E_k-E_n)/\varepsilon^2}v_{m,k}v_{n,m}\partial^2_{y}u_n^\eps(s)\chi_k \, ds.
	\end{equation}	
We further simplify the second term on the r.h.s. of \eqref{GstepB} by using the orthogonality properties of the polynomials $P_n$ in $\chi_n$ (see e.g. \cite{messiah}). This yields
	\begin{equation*}\begin{aligned}
		v_{n,m} &= (2b)^{-1/2}\langle \sqrt{n+1}\chi_{n+1}, \chi_m \rangle_{L^2_x} + (2b)^{-1/2}\langle \sqrt{n}\chi_{n-1}, \chi_m\rangle_{L^2_x}\\
		&= (2b)^{-1/2}\left(\sqrt{n+1}\delta_{n+1,m} + \sqrt{n}\delta_{n-1,m}\right),
	\end{aligned}\end{equation*}
	where $\delta_{a, b}$ is the Kronecker delta.
Using this fact together with $E_n = b(n + \frac{1}{2})$, we obtain
	\begin{equation*}\begin{aligned}
		& ib^2\sum_n\sum_{m\neq n}\frac{1}{E_n-E_m}\int_0^t|v_{n,m}|^2\partial^2_{y}u_n^\eps(s)\chi_nds\\
		&= \frac{ib}{2}\sum_n\sum_{m\neq n}\frac{1}{E_n-E_m}\int_0^t(\sqrt{n+1}\delta_{n+1,m} + \sqrt{n}\delta_{n-1,m})^2\partial^2_{y}u_n^\eps(s)\chi_n \, ds\\
		&= \frac{ib}{2}\sum_n\frac{n+1}{-b}\int_0^t\partial^2_{y}u_n^\eps(s)\chi_nds + \frac{ib}{2}\sum_n\frac{n}{b}\int_0^t\partial^2_{y}u_n^\eps(s)\chi_n\, ds\\
		&= -\frac{i}{2}\sum_n\int_0^t\partial^2_{y}u_n^\eps(s)\chi_n\, ds \\
		&\equiv -\frac{1}{2}i\int_0^t\partial^2_{y}\phi^\eps(s)\, ds.
	\end{aligned}\end{equation*}
	
In summary, we obtain the announced identity \eqref{task}, in which the terms $G^\varepsilon_j(t)$, $j=1, \dots, 5,$ are given by, respectively, 
\eqref{G1}, \eqref{G2}, \eqref{G3}, \eqref{G4}, and \eqref{G5}. We consequently can rewrite \eqref{filteredPDE2} as
	\begin{equation}\begin{split}\label{filteredDuSimp}
		\phi^\eps(t) = \psi_0 + \sum_{j=1}^5G^\varepsilon_j(t) - i\int_0^t\lambda F\left(\frac{s}{\varepsilon^2}, \phi^\eps(s)\right)ds.
	\end{split}\end{equation}
Our next task is to derive appropriate bounds on the $G_j^\eps(t)$, $j=1, \dots, 5$. This will be done in several steps below.

%%%%%%% G^eps_j bounds

\subsection{Bounds on the expansion coefficients} First, we have a general bound on the function $\mathcal G(\theta, u)$ defined in \eqref{Gtilde}:

\begin{lemma}\label{tildeGbound}
For any $u\in \Sigma^{1}$,
\[
\sup_{\theta > 0}\Vert\mathcal G(\theta, u)\Vert_{L^2} \lesssim \Vert u\Vert_{\Sigma^{1}}.
\]
\end{lemma}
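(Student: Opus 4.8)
The plan is to compute $\mathcal G(\theta,u)$ completely explicitly in the eigenbasis of the harmonic oscillator and then read off the bound. The point is that the naive estimate $\Vert\mathcal G(\theta,u)\Vert_{L^2}\le\int_0^\theta\Vert G(\tau,u)\Vert_{L^2}\,d\tau$ only yields a bound of order $\theta\Vert u\Vert_{\Sigma^1}$ (using Lemma~\ref{1} with $m=0$ to get $\Vert x e^{-i\tau H}u\Vert_{L^2}\lesssim\Vert u\Vert_{\Sigma^1}$ uniformly in $\tau$, since $e^{-i\tau H}$ is an isometry on $\Sigma^1$ by Lemma~\ref{Ubound}), so we must exploit the cancellation encoded in $G_{\rm av}(u)=0$.

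Decompose $u=\sum_{n\ge0}u_n(y)\chi_n(x)$ with $u_n(y):=\langle u(\cdot,y),\chi_n\rangle_{L^2_x}$, so that $\Vert u\Vert_{L^2}^2=\sum_n\Vert u_n\Vert_{L^2_y}^2$ and, since $Hu=\sum_n E_n u_n\chi_n$, also $\Vert H^{1/2}u\Vert_{L^2}^2=\sum_n E_n\Vert u_n\Vert_{L^2_y}^2$. Using the three–term recurrence $x\chi_n=(2b)^{-1/2}(\sqrt{n+1}\,\chi_{n+1}+\sqrt n\,\chi_{n-1})$ together with $E_{n\pm1}-E_n=\pm b$, a direct computation gives
\[
G(\tau,u)=e^{i\tau H}\big(x e^{-i\tau H}u\big)=\frac{1}{\sqrt{2b}}\sum_{n\ge0}u_n\Big(\sqrt{n+1}\,e^{ib\tau}\chi_{n+1}+\sqrt n\,e^{-ib\tau}\chi_{n-1}\Big),
\]
where the series converges in $L^2(\R^2)$ uniformly in $\tau\in\R$, because $\sum_n(n+1)\Vert u_n\Vert_{L^2_y}^2=\tfrac1b\Vert H^{1/2}u\Vert_{L^2}^2+\tfrac12\Vert u\Vert_{L^2}^2<\infty$ for $u\in\Sigma^1$. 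This uniform convergence lets us integrate in $\tau$ term by term, so that
\[
\mathcal G(\theta,u)=\frac{1}{\sqrt{2b}}\sum_{n\ge0}u_n\Big(\sqrt{n+1}\,\frac{e^{ib\theta}-1}{ib}\,\chi_{n+1}+\sqrt n\,\frac{1-e^{-ib\theta}}{ib}\,\chi_{n-1}\Big).
\]

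Now the estimate is immediate: $\big|\tfrac{e^{\pm ib\theta}-1}{ib}\big|\le\tfrac2b$ for every $\theta$, and the families $\{\chi_{n+1}\}_{n\ge0}$, $\{\chi_{n-1}\}_{n\ge1}$ are orthonormal in $L^2_x$, so splitting $\mathcal G(\theta,u)$ into its two sums and applying Bessel's inequality in $x$ gives
\[
\Vert\mathcal G(\theta,u)\Vert_{L^2}\lesssim\Big(\sum_{n\ge0}(n+1)\Vert u_n\Vert_{L^2_y}^2\Big)^{1/2}\lesssim\big(\Vert u\Vert_{L^2}^2+\Vert H^{1/2}u\Vert_{L^2}^2\big)^{1/2}\le\Vert u\Vert_{\Sigma^1},
\]
with a constant independent of $\theta$; taking the supremum over $\theta>0$ proves the lemma. (Equivalently, the closed form above makes transparent that $\mathcal G(\cdot,u)$ is $\tfrac{2\pi}{b}$–periodic — which is exactly the statement $G_{\rm av}(u)=0$ — so one could instead reduce $\sup_{\theta>0}$ to $\sup_{\theta\in[0,2\pi/b]}$ and use the crude bound there.)

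The only point requiring a little care is the justification of the term–by–term integration, i.e.\ that the partial sums of $G(\tau,u)$ converge to $G(\tau,u)$ in $L^2$ uniformly in $\tau$; this follows because $x$ maps $\Sigma^1$ boundedly into $L^2$ and $e^{\mp i\tau H}$ acts isometrically on $\Sigma^1$ and $L^2$ respectively, so the $L^2$–error of truncating at level $N$ is controlled by $\Vert u-\sum_{n\le N}u_n\chi_n\Vert_{\Sigma^1}$, independently of $\tau$. Everything else is bookkeeping with the Hermite recurrence.
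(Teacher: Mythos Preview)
Your proof is correct but proceeds differently from the paper's. The paper's argument is softer: it observes that $G_{\rm av}(u)=0$ forces $\mathcal G(\cdot,u)$ to be $\tfrac{2\pi}{b}$--periodic, reduces the supremum to $\theta\in(0,2\pi/b]$, and then simply bounds $\int_0^{2\pi/b}\Vert e^{i\tau H}xe^{-i\tau H}u\Vert_{L^2}\,d\tau\lesssim\Vert u\Vert_{\Sigma^1}$ via Lemmas~\ref{1} and~\ref{Ubound}. This is exactly the alternative you flag in your closing parenthetical. Your route instead writes $\mathcal G(\theta,u)$ in closed form using the Hermite recurrence and reads the uniform bound off the factor $|e^{\pm ib\theta}-1|/b\le 2/b$. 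Your approach gives more information (the explicit formula, which in fact reappears in the paper's analysis of $G_4^\eps$ and $G_5^\eps$), while the paper's is shorter and would generalise to confinement operators where no three--term recurrence is available, needing only that $G$ is periodic with zero mean.
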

\begin{proof}
 As we have shown in (\ref{Gav}), $\mathcal G(\frac{2\pi}{b}, u) = 0$. Since $G(\tau, u)$ is $\frac{2\pi}{b}$-periodic in $\tau$, we then have that $\mathcal G(\theta, u)$ is $\frac{2\pi}{b}$-periodic in $\theta$. Thus,
	\begin{equation}\begin{aligned}
		\sup_{\theta > 0}\Vert\mathcal G(\theta, u)\Vert_{L^2} &= \sup_{\theta \in (0,2\pi/b]}\Vert\mathcal G(\theta, u)\Vert_{L^2}\\
		&\leq \sup_{\theta \in (0,2\pi/b]}\int_0^\theta\Vert e^{i\tau H}xe^{-i\tau H}u\Vert_{L^2}d\tau\\
		&= \int_0^{2\pi/b}\Vert e^{i\tau H}xe^{-i\tau H}u\Vert_{L^2}d\tau.
	\end{aligned}\end{equation}
Using Lemmas \ref{1} and \ref{Ubound}, we find
	\begin{equation}
		\sup_{\theta > 0}\Vert\mathcal G(\theta, u)\Vert_{L^2} \lesssim \int_0^{2\pi/b}\Vert u\Vert_{\Sigma^1}d\tau
		\lesssim \Vert u\Vert_{\Sigma^1}.
	\end{equation}
\end{proof}
This general bound on $\mathcal G$ then directly yields the following bounds on $G_1^\eps(t)$, $G_2^\eps(t)$, and $G_3^\eps(t)$:
\begin{lemma}\label{GepsLem1} It holds
	\begin{equation}\label{Geps12}
		\|G_1^\eps(t)\|_{L^2} \lesssim \eps\Big(\sup_{s\in[0,t]}\|\phi^\eps(s)\|_{\Sigma^2}\Big), \quad \|G_2^\eps(t)\|_{L^2} \lesssim \eps t\Big(\sup_{s\in[0,t]}\|\phi^\eps(s)\|_{\Sigma^4}\Big)
	\end{equation}
as well as
	\begin{equation}\label{Geps3}
		\|G_3^\eps(t)\|_{L^2} \lesssim \eps t|\lambda|\Big(\sup_{s\in[0,t]}\|\phi^\eps(s)\|_{\Sigma^2}^{2\sigma+1}\Big).
	\end{equation}
\end{lemma}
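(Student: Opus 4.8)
The plan is to reduce all three estimates to the uniform-in-$\theta$ bound on $\mathcal G$ from Lemma \ref{tildeGbound}, namely $\sup_{\theta>0}\Vert\mathcal G(\theta,u)\Vert_{L^2}\lesssim\Vert u\Vert_{\Sigma^1}$, and then to absorb the remaining $y$-derivatives (and, for $G_3^\eps$, the nonlinearity) using Lemma \ref{1} and Corollary \ref{MoserCor}. None of the three bounds requires a genuinely new idea once Lemma \ref{tildeGbound} is available; the only care needed is the bookkeeping of how many $\Sigma$-derivatives in $y$ are consumed at each stage.

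First, for $G_1^\eps$: from the definition \eqref{G1} we have $\Vert G_1^\eps(t)\Vert_{L^2}=\eps b\,\Vert\mathcal G(t/\eps^2,\partial_y\phi^\eps(t))\Vert_{L^2}$, and applying Lemma \ref{tildeGbound} with $u=\partial_y\phi^\eps(t)$ gives $\Vert G_1^\eps(t)\Vert_{L^2}\lesssim\eps\Vert\partial_y\phi^\eps(t)\Vert_{\Sigma^1}$. By Lemma \ref{1}, $\Vert\partial_y\phi^\eps(t)\Vert_{\Sigma^1}\lesssim\Vert\phi^\eps(t)\Vert_{\Sigma^2}\leq\sup_{s\in[0,t]}\Vert\phi^\eps(s)\Vert_{\Sigma^2}$, which is the first estimate in \eqref{Geps12}. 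For $G_2^\eps$, one moves the $L^2$-norm inside the time integral in \eqref{G2}, so that $\Vert G_2^\eps(t)\Vert_{L^2}\le\tfrac{\eps b}{2}\int_0^t\Vert\mathcal G(s/\eps^2,\partial_y^3\phi^\eps(s))\Vert_{L^2}\,ds$; Lemma \ref{tildeGbound} bounds the integrand by $\Vert\partial_y^3\phi^\eps(s)\Vert_{\Sigma^1}$, three successive applications of Lemma \ref{1} give $\Vert\partial_y^3\phi^\eps(s)\Vert_{\Sigma^1}\lesssim\Vert\phi^\eps(s)\Vert_{\Sigma^4}$, and estimating the remaining integral by $t\sup_{s\in[0,t]}\Vert\phi^\eps(s)\Vert_{\Sigma^4}$ yields the second bound in \eqref{Geps12}.

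For $G_3^\eps$, the same steps give $\Vert G_3^\eps(t)\Vert_{L^2}\le\eps b\int_0^t\Vert\mathcal G(s/\eps^2,\partial_y(\lambda F(s/\eps^2,\phi^\eps(s))))\Vert_{L^2}\,ds\lesssim\eps\int_0^t\Vert\partial_y(\lambda F(s/\eps^2,\phi^\eps(s)))\Vert_{\Sigma^1}\,ds$ by Lemma \ref{tildeGbound}. Since $\lambda$ is a constant here, Lemma \ref{1} gives $\Vert\partial_y(\lambda F(\theta,u))\Vert_{\Sigma^1}\lesssim|\lambda|\,\Vert F(\theta,u)\Vert_{\Sigma^2}$. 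Using the representation $F(\theta,u)=e^{i\theta H}\big(|e^{-i\theta H}u|^{2\sigma}e^{-i\theta H}u\big)$, the $\Sigma^2$-isometry property of $e^{\pm i\theta H}$ (Lemma \ref{Ubound}), and the Moser/Banach-algebra estimate of Corollary \ref{MoserCor}, one obtains $\Vert F(\theta,u)\Vert_{\Sigma^2}\lesssim\Vert u\Vert_{\Sigma^2}^{2\sigma+1}$ uniformly in $\theta$. Inserting $u=\phi^\eps(s)$ and bounding the time integral by $t\sup_{s\in[0,t]}\Vert\phi^\eps(s)\Vert_{\Sigma^2}^{2\sigma+1}$ gives \eqref{Geps3}.

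I do not expect a real obstacle: the whole content of the lemma is packaged in Lemma \ref{tildeGbound}, and the rest is derivative counting together with the well-posedness toolbox of Section \ref{sec:frame}. The one slightly less mechanical point is the uniform-in-$\theta$ bound on $\Vert F(\theta,\cdot)\Vert_{\Sigma^2}$, which is where the $\Sigma^2$-invariance of the harmonic-oscillator propagator is essential — without it the oscillatory conjugation in $F$ would not be controllable uniformly in $\eps$.
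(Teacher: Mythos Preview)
Your proof is correct and follows essentially the same approach as the paper: the paper's proof is extremely terse (it just says the estimates follow from the definitions and Lemma \ref{tildeGbound}, together with the bound $\Vert\lambda F(s/\eps^2,\phi^\eps(s))\Vert_{\Sigma^2}\lesssim|\lambda|\,\Vert\phi^\eps(s)\Vert_{\Sigma^2}^{2\sigma+1}$ via Lemma \ref{Ubound} and Corollary \ref{MoserCor}), and you have simply spelled out the derivative-counting via Lemma \ref{1} that the paper leaves implicit.
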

\begin{proof}
The estimates \eqref{Geps12} follow by the definition of $G_1^\eps(t)$ and $G_2^\eps(t)$ and the previous lemma. Moreover, \eqref{Geps3} can be 
bounded in the same way, using the additional fact that
\[
\left \|\lambda F\left(\frac{s}{\varepsilon^2}, \phi^\eps(s)\right)\right \|_{\Sigma^2} \lesssim |\lambda|\cdot\|\phi^\eps(s)\|_{\Sigma^2}^{2\sigma+1},
\]
which follows from Lemma \ref{Ubound} and Corollary \ref{MoserCor}.
\end{proof}
The bounds on $G^\eps_4(t)$ and $G^\eps_5(t)$ are more complicated, and they are the main reason for our strong regularity requirements on $\phi$:
\begin{proposition}\label{GepsLem2} We have that
	\begin{equation}\label{Geps4}
		\|G_4^\eps(t)\|_{L^2} \lesssim \eps(1+t)\sup_{s\in[0,t]}\left(\|\phi^{\eps}(s)\|_{\Sigma^{15}}+\|\phi^{\eps}(s)\|_{\Sigma^{15}}^{2\sigma+1}\right)
	\end{equation}
and
	\begin{equation}\label{Geps5}
		\|G_5^\eps(t)\|_{L^2} \lesssim \eps(1+t)\sup_{s\in[0,t]}\left(\|\phi^{\eps}(s)\|_{\Sigma^{15}}+\|\phi^{\eps}(s)\|_{\Sigma^{15}}^{2\sigma+1}\right).
	\end{equation}
\end{proposition}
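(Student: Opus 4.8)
The plan is to handle $G_4^\eps(t)$ and $G_5^\eps(t)$ by the same mechanism, so I describe it for $G_4^\eps(t)$. The key observation is that each of these quantities is an \emph{oscillatory} time integral: in \eqref{G4} the phase is $e^{is(E_m-E_n)/\eps^2}$ with $m\neq n$, hence $E_m-E_n=b(m-n)$ is a nonzero integer multiple of $b$ and in particular $|E_m-E_n|\ge b$; likewise in \eqref{G5} the phase $e^{is(E_k-E_n)/\eps^2}$ has $k\neq n$. I would therefore integrate by parts in $s$: writing $e^{is(E_m-E_n)/\eps^2}=\tfrac{\eps^2}{i(E_m-E_n)}\partial_s e^{is(E_m-E_n)/\eps^2}$, the integral $\int_0^t e^{is(E_m-E_n)/\eps^2}\partial_y^2 u_n^\eps(s)\,ds$ splits into boundary contributions of size $\mathcal O(\eps^2)$ and the term $-\tfrac{\eps^2}{i(E_m-E_n)}\int_0^t e^{is(E_m-E_n)/\eps^2}\,\partial_y^2\partial_s u_n^\eps(s)\,ds$, into which I substitute the filtered equation \eqref{filteredPDE} for $\partial_s\phi^\eps$.

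This substitution produces three kinds of terms: a dispersive one $\propto\partial_y^4 u_n^\eps$, a nonlinear one $\propto\langle\partial_y^2 F(\tfrac s{\eps^2},\phi^\eps),\chi_n\rangle$, and — crucially — the singular one $\propto\tfrac1\eps\langle G(\tfrac s{\eps^2},\partial_y^3\phi^\eps),\chi_n\rangle$. The first two are already of order $\eps^2$ and can be estimated directly, using Lemma~\ref{1} to trade the weights $x$ and $\partial_x$ for powers of $H$, Lemma~\ref{Ubound} to discard the unitaries $e^{\pm i\theta H}$, and Corollary~\ref{MoserCor} for the nonlinearity (this last term is the only source of the $\|\phi^\eps\|_{\Sigma^{15}}^{2\sigma+1}$ in \eqref{Geps4}--\eqref{Geps5}). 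In the singular term the $\eps^2$ gained from the integration by parts cancels against the $\tfrac1\eps$ in \eqref{filteredPDE}, leaving exactly the claimed $\mathcal O(\eps)$. Expanding $\langle G(\tfrac s{\eps^2},\partial_y^3\phi^\eps),\chi_n\rangle=\sum_{m'}e^{is(E_n-E_{m'})/\eps^2}v_{m',n}\,\partial_y^3 u_{m'}^\eps(s)$, the two phases recombine into $e^{is(E_m-E_{m'})/\eps^2}$, and the remaining integral — oscillatory unless $E_m=E_{m'}$, in which case it is simply a $t$-bounded resonant integral — is bounded by $t\sup_{s\in[0,t]}\|\partial_y^3 u_{m'}^\eps(s)\|_{L^2_y}$. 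Here the boundary terms account for the $1$ and the $s$-integrals for the $t$ in the factor $(1+t)$.

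What makes the infinitely many eigenmode contributions sum up is the structure of $v_{n,m}=\langle x\chi_n,\chi_m\rangle$ already used in \eqref{Gav}: it vanishes unless $m=n\pm1$, so that for fixed $k$ each of the triple sums in \eqref{G4} and \eqref{G5} has only boundedly many nonzero terms, and $|v_{n,m}|\lesssim\sqrt{n+1}$. After the substitution the coefficient of $\chi_k$ carries a polynomial weight in $k$, namely the product of the $v$'s, which is exactly of the type absorbed by the $\|H^{m/2}\cdot\|_{L^2}$ piece of the $\Sigma^m$-norm \eqref{norm}; since the substitution also produces extra $y$-derivatives falling on $\phi^\eps$, closing the estimate requires $\phi^\eps$ in a high-order space. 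A rather tedious (and by no means optimized) accounting of how many powers of $H$ and of $\partial_y$ are generated then yields the regularity $\Sigma^{15}$ stated in \eqref{Geps4}--\eqref{Geps5}. For $G_5^\eps(t)$ the argument is literally the same, the only change being that here $|k-n|=2$, so that after recombination the phase $E_k-E_{m'}=b(k-m')$ with $|k-m'|\in\{1,3\}$ never vanishes and no resonant contribution occurs at all.

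I expect the main obstacle to be precisely this bookkeeping: one must verify that at every stage — the $\mathcal O(\eps^2)$ boundary terms, the dispersive and nonlinear substitutions, and the singular ``carrier'' term — the expression is controlled in $L^2(\R^2)$ by $\eps(1+t)$ times a fixed, $\eps$-independent power of $\|\phi^\eps\|_{\Sigma^{15}}$, and that all eigenmode sums over $n$ converge. This is where the weighted $\ell^2$-structure built into $\|\cdot\|_{\Sigma^m}$, together with the tridiagonality and the $\sqrt{n+1}$-growth of $v_{n,m}$, is indispensable; it is also the reason the proof of this proposition is considerably more delicate than the bounds in Lemma~\ref{GepsLem1}.
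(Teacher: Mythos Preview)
Your approach is correct and shares the same backbone as the paper's: integrate by parts in $s$ against the oscillatory factor $e^{is(E_m-E_n)/\eps^2}$ (respectively $e^{is(E_k-E_n)/\eps^2}$) to gain a factor $\eps^2$, and then control $\partial_s\phi^\eps$ through the filtered equation \eqref{filteredPDE}, where the singular term costs exactly one power of $\eps$.

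The genuine difference lies in how the eigenmode sums are handled. You exploit the \emph{tridiagonality} of $v_{n,m}$ (vanishing unless $m=n\pm1$), so that for each fixed $k$ only finitely many $(n,m,m')$ survive; the polynomial weight $\sim k^{3/2}$ coming from the product of $v$'s is then absorbed by a few powers of $H$ in the $\Sigma^m$-norm. You also substitute the equation \emph{inside} the sum, splitting into dispersive, nonlinear, and singular pieces and re-expanding the last one in eigenmodes. The paper instead proves two flexible weighted bounds, $|v_{n,m}|\lesssim E_n^{(p+1)/2}E_m^{-p/2}$ and $\|f_n\chi_k\|_{L^2}\lesssim E_n^{-q/2}\|f\|_{\Sigma^q}$, and then chooses the exponents ($p=3$, $l=7$, $q=11$) so that the three sums converge \emph{independently} like $\sum_j E_j^{-3/2}$, yielding $\sum_{n,m,k}|v_{m,k}||v_{n,m}|\|f_n\chi_k\|_{L^2}\lesssim\|f\|_{\Sigma^{11}}$. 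Only afterwards does it invoke the equation, bounding $\|\partial_s\phi^\eps\|_{\Sigma^{13}}\lesssim\eps^{-1}(\|\phi^\eps\|_{\Sigma^{15}}+\|\phi^\eps\|_{\Sigma^{15}}^{2\sigma+1})$ in one stroke rather than term by term. Your route is more elementary and, if the bookkeeping is done carefully, should actually give a lower regularity threshold than $\Sigma^{15}$; the paper's route is more mechanical and does not rely on the specific nearest-neighbour structure of $x\chi_n$, so it would generalise more readily to other confining operators.
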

\begin{proof}
We begin with \eqref{Geps4}. Note that the integral term in $G_4^\eps(t)$ can be rewritten as
	\begin{equation*}\begin{aligned}
		& \int_0^t e^{is (E_m-E_n)/\varepsilon^2}v_{m,k}v_{n,m}\partial^2_{y}u_n^\eps(s)\chi_k\, ds\\
		&= \int_0^t\Big(\frac{-i\varepsilon^2}{E_m-E_n}\, \partial_se^{is (E_m-E_n)/\varepsilon^2}\Big)v_{m,k}v_{n,m}\partial^2_{y}u_n^\eps(s)\chi_k\, ds,
	\end{aligned}\end{equation*}
and integrating by parts yields
	\begin{equation*}\begin{aligned}
		& \int_0^te^{is (E_m-E_n)/\varepsilon^2}v_{m,k}v_{n,m}\partial^2_{y}u_n^\eps(s)\chi_kds\\
		&= \int_0^t\frac{i\varepsilon^2}{E_m-E_n}e^{is (E_m-E_n)/\varepsilon^2}v_{m,k}v_{n,m}\partial^2_{y}\partial_su_n^\eps(s)\chi_k\, ds\\
		&\quad\quad\quad+ \frac{\varepsilon^2}{i(E_m-E_n)}e^{it (E_m-E_n)/\varepsilon^2}v_{m,k}v_{n,m}\partial^2_{y}u_n^\eps(t)\chi_k\\
		&\quad\quad\quad- \frac{\varepsilon^2}{i(E_m-E_n)}v_{m,k}v_{n,m}\partial^2_{y}u_n^\eps(0)\chi_k.
	\end{aligned}\end{equation*}
Plugging this into \eqref{G4}, we get
	\begin{equation}\begin{aligned}\label{G4step}
		\|G_4^\eps(t)\|_{L^2} &\lesssim \eps^2\sum_n\sum_{m\neq n}\sum_{k\neq m}\Big(\int_0^t|v_{m,k}|\, |v_{n,m}|\, \|\partial^2_{y}\partial_su_n^\eps(s)\chi_k\|_{L^2}ds\\
		&\qquad\qquad\qquad\qquad + |v_{m,k}|\, |v_{n,m}|\, \|\partial^2_{y}\big(u_n^\eps(t)+u_n^\eps(0)\big)\chi_k\|_{L^2}\Big).
	\end{aligned}\end{equation}
To proceed, we prove two useful estimates. First, for any $p \in \N_0$ we have
	\begin{equation*}\begin{aligned}
		E_m^{p/2}|v_{n,m}| &= \left |\langle  H^{p/2}(x\chi_n), \chi_m \rangle_{L^2_x} \right|\\
		&\lesssim \| H^{p/2}(\sqrt{n+1}\chi_{n+1} + \sqrt{n}\chi_{n-1})\|_{L^2_x} \lesssim E_{n}^{(p+1)/2},
	\end{aligned}\end{equation*}
and so
	\begin{equation}\begin{split}\label{vEst}
		|v_{n,m}| \lesssim \frac{E_{n}^{(p+1)/2}}{E_m^{p/2}}.
	\end{split}\end{equation}
Second, let $f_n \equiv \langle f, \chi_n\rangle_{L^2_x}$, where $f=f(x,y)$ is some sufficiently regular function. Then, for any $q\in \N_0$,
	\begin{equation*}\begin{aligned}
		E_n^q\| f_n\chi_k\|_{L^2}^2 &= E_n^q\| f_n\|_{L^2_y}^2 = \int \left|\langle f, H^{q/2}\chi_n\rangle_{L^2_x}\right|^2 dy\\
		&\leq \int\|H^{q/2}f\|_{L^2_x}^2 \, dy \lesssim \|f\|_{\Sigma^{q}}^2,
	\end{aligned}\end{equation*}
where the last line follows from Lemma \ref{1}. Thus,
	\begin{equation}\label{fEst}
		\| f_n\chi_k\|_{L^2} \lesssim E_n^{-q/2}\|f\|_{\Sigma^{q}}.
	\end{equation}
Using \eqref{vEst} and \eqref{fEst}, we can estimate	
\begin{equation*}\begin{aligned}
		\sum_n\sum_{m\neq n}&\sum_{k\neq m}|v_{m,k}|\, |v_{n,m}|\, \|f_n\chi_k\|_{L^2}\\
		&\lesssim \sum_n\sum_{m}\sum_{k}\frac{E_m^{(p+1)/2}}{E_k^{p/2}}\cdot\frac{E_n^{(l+1)/2}}{E_m^{l/2}}\cdot E_n^{-q/2}\|f\|_{\Sigma^{q}}\\
		&= \sum_n\sum_{m}\sum_{k}E_k^{-3/2}E_m^{-3/2}E_n^{-3/2}\|f\|_{\Sigma^{11}} \lesssim \|f\|_{\Sigma^{11}},
	\end{aligned}\end{equation*}
where we chose $p = 3$, $l = 7$, $q = 11$, and recalling that $E_n = b(n + \frac{1}{2})$. Applying this to \eqref{G4step}, we have
	\begin{equation*}\begin{aligned}
		\|G_4^\eps(t)\|_{L^2} &\lesssim \eps^2\Big(\int_0^t\|\partial^2_{y}\partial_s\phi^{\eps}(s)\|_{\Sigma^{11}}ds + \|\partial^2_{y}\big(\phi^{\eps}(t)+\phi^{\eps}(0)\big)\|_{\Sigma^{11}}\Big)\\
		&\lesssim \eps^2\Big(\int_0^t\|\partial_s\phi^{\eps}(s)\|_{\Sigma^{13}}ds + \|\phi^{\eps}(t)+\phi^{\eps}(0)\|_{\Sigma^{13}}\Big).
	\end{aligned}\end{equation*}
As previously done in the estimate \eqref{wpEst1}, one can see from the filtered equation \eqref{filteredPDE}, that
	\begin{equation}\begin{split}
		\|\partial_s\phi^{\eps}(s)\|_{\Sigma^{13}} \lesssim \frac{1}{\eps}\left(\|\phi^{\eps}(s)\|_{\Sigma^{15}}+\|\phi^{\eps}(s)\|_{\Sigma^{15}}^{2\sigma+1}\right),
	\end{split}\end{equation}
and thus, we obtain the desired estimate \eqref{Geps4}. The proof for \eqref{Geps5} is nearly identical, and hence we omit the details.
\end{proof}

%%%%%%%%%%%%%%%%%%%%%%%%%%%%%%%%%%%%%%%%%
%%%%%%%%%%%%%%%%%%%%%%%%%%%%%%%%%%%%%%%%%

\section{\textbf{Proof of the asymptotic approximation result}}\label{sec:stab}

This section is devoted to the proof of item (ii) in Theorem \ref{main}. We begin by establishing a time interval on which certain estimates can be controlled.
Let $\phi$ and $\psi^{\varepsilon}$ be the solutions to (\ref{limeq}) and (\ref{pde2}), respectively, subject to the same initial data $\psi_0\in \Sigma^{15}$. 
Let $T_{\rm max}$ and $T^{\varepsilon}_{\rm max}$ be the maximal existence times for $\phi$ and $\psi^{\varepsilon}$, respectively, 
as given in Propositions \ref{wp2} and \ref{wp1}. Fix $T \in (0,T_{\rm max})$ and set
\begin{equation*}
		M = \sup_{\varepsilon > 0}\Vert e^{-itH/\varepsilon^2}\phi\Vert_{L^\infty((0,T)\times\mathbb{R}^2)}.
\end{equation*}
Since $\Sigma^{15} \hookrightarrow L^\infty$, Lemma \ref{Ubound} implies that
	\begin{equation*}
		\Vert e^{-itH/\varepsilon^2}\phi\Vert_{L^\infty((0,T)\times\mathbb{R}^2)} \lesssim \Vert e^{-itH/\varepsilon^2}\phi\Vert_{L^\infty((0,T), \Sigma^{15})} = \Vert \phi\Vert_{L^\infty((0,T), \Sigma^{15})}.
	\end{equation*}
Therefore,
	\begin{equation}\label{Mcon}
		\Vert \psi_0\Vert_{L^\infty} = \Vert \phi(0)\Vert_{L^\infty} \leq M \lesssim \Vert \phi\Vert_{L^\infty((0,T), \Sigma^{15})} < \infty.
	\end{equation}
Let us define
	\begin{equation*}
		T^{\eps} = \sup \Big\{ t \in [0,T^{\eps}_{\rm max})\, : \, \Vert \psi^{\varepsilon}(s)\Vert_{L^\infty} \leq 2M \,\,\forall s\in[0,t]\Big\}.
	\end{equation*}
By \eqref{Mcon} and the continuity of $\psi^{\varepsilon}$, we have $T^{\eps} > 0$. 
Recalling Duhamel's formula \eqref{duhamel}, we can now use Lemma \ref{Ubound} and Lemma \ref{Moser} to infer that, for all $t \in [0, T^{\eps})$: 
	\begin{equation*}\begin{split}
		\Vert\psi^{\varepsilon}(t)\Vert_{\Sigma^{15}} &\leq \Vert e^{-itH_\varepsilon/\varepsilon^2}\psi_0\Vert_{\Sigma^{15}} + \int_0^t\Vert e^{-i(t-s)H_\varepsilon/\varepsilon^2}\lambda|\psi^{\varepsilon}(s)|^{2\sigma}\psi^{\varepsilon}(s)\Vert_{\Sigma^{15}}ds\\
		&\lesssim \Vert\psi_0\Vert_{\Sigma^{15}} + \int_0^t|\lambda|\Vert \psi^{\varepsilon}(s)\Vert_{L^\infty}^{2\sigma}\Vert\psi^{\varepsilon}(s)\Vert_{\Sigma^{15}}ds\\
		&\leq \Vert\psi_0\Vert_{\Sigma^{15}} + \int_0^t|\lambda|(2M)^{2\sigma}\Vert\psi^{\varepsilon}(s)\Vert_{\Sigma^{15}}ds.
	\end{split}\end{equation*}
Using Gr\"onwall's inequality, we consequently obtain
	\begin{equation}\label{Sigma4bound}
		\Vert\psi^{\varepsilon}(t)\Vert_{\Sigma^{15}} \lesssim \Vert\psi_0\Vert_{\Sigma^{15}}e^{t(2M)^{2\sigma}\, |\lambda|}, \quad \text{for all $t \in [0, T^{\eps})$.}
	\end{equation}
A consequence of this bound is that if $T^{\eps} < \infty$, then
	\begin{equation}\label{Tcon}
		T^{\eps} < T^{\eps}_{\rm max} \,\,\,\text{and}\,\,\, \Vert \psi^{\varepsilon}(T^{\eps})\Vert_{L^\infty} = 2M.
	\end{equation}
To see this, we note that if $T^{\eps} = T^{\eps}_{\rm max} < \infty$, then 
\[
\lim_{t \to T^{\eps}_{\rm max}}\Vert\psi^{\varepsilon}(t)\Vert_{\Sigma^{15}} < \infty,
\] 
which contradicts \eqref{blowup} in Proposition \ref{wp1}. Moreover, if $\Vert \psi^{\varepsilon}(T^{\eps})\Vert_{L^\infty} < 2M$, then, by continuity, 
$\Vert \psi^{\varepsilon}(T^{\eps} + \delta)\Vert_{L^\infty} < 2M$ for some $\delta > 0$, which contradicts the definition of $T^{\eps}$ above.

%%%%%%%%%%%%%%%%%%%%%%%%%%%%%%%%%%%%%%%
%%%%%%%%%%%%%%%%%%%%%%%%%%%%%%%%%%%%%%%%%%
% Estimate of approximation

\subsection{Nonlinear stability of the approximation}

 We are now ready to estimate the difference $\Vert\phi^\eps - \phi\Vert_{L^2}$, in the limit $\eps \to 0_+$. 
 Recalling the Duhamel formulations \eqref{limeqDu} and \eqref{filteredDuSimp}, we have, for all $t \in [0,\min\{T, T^{\eps}\}]$:
	\begin{equation}\begin{split}\label{uForm}
		\phi^\eps (t)- \phi (t)&= \sum_{j=1}^5G^\varepsilon_j(t) -i\lambda\int_0^t\left(F(\frac{s}{\varepsilon^2}, \phi^\eps(s)) - F_{\rm av}(\phi(s))\right)ds\\
		&= \sum_{j=1}^5G^\varepsilon_j(t) -i\lambda\int_0^t\left(F(\frac{s}{\varepsilon^2}, \phi^\eps(s)) - F(\frac{s}{\varepsilon^2}, \phi(s))\right)ds\\
		&\quad\quad\quad\quad - i\lambda\int_0^t\left(F(\frac{s}{\varepsilon^2}, \phi(s)) - F_{\rm av}(\phi(s))\right)ds\\
		&:= \sum_{j=1}^5G^\varepsilon_j (t)- i\lambda A_1 (t) - i\lambda A_2 (t).
	\end{split}\end{equation}
This allows us to estimate
	\begin{equation}\begin{aligned}\label{uFormBound}
		\|\phi^\eps (t)- \phi (t)\|_{L^2} &\leq \sum_{j=1}^5\|G^\varepsilon_j(t)\|_{L^2} + |\lambda|\big( \|A_1(t)\|_{L^2} +  \|A_2(t)\|_{L^2}\big),
	\end{aligned}\end{equation}
and we can bound the first term on the r.h.s. using the following lemma:
\begin{lemma}\label{GepsCor}
For all $t \in [0,\min\{T, T^{\eps}\}]$, it holds
	\begin{equation}\begin{split}
		\sum_{j=1}^5\Vert G^\varepsilon_j(t)\Vert_{L^2} \leq \varepsilon C(T).
	\end{split}\end{equation}
\end{lemma}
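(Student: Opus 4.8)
The plan is to assemble the bounds on the individual terms $G_j^\eps(t)$ obtained in Lemma \ref{GepsLem1} and Proposition \ref{GepsLem2}, and then reduce everything to a single uniform-in-$\eps$ bound on $\sup_{s\in[0,t]}\|\phi^\eps(s)\|_{\Sigma^{15}}$. First I would note that, since $\mathrm{spec}(H)\subset[b/2,\infty)$, the $\Sigma^{15}$-norm dominates the $\Sigma^m$-norm for every $m\le 15$, so each right-hand side in \eqref{Geps12}, \eqref{Geps3}, \eqref{Geps4}, \eqref{Geps5} is bounded by $\eps(1+t)\sup_{s\in[0,t]}\big(\|\phi^\eps(s)\|_{\Sigma^{15}}+\|\phi^\eps(s)\|_{\Sigma^{15}}^{2\sigma+1}\big)$, with a constant depending only on $b$, $\sigma$, and $|\lambda|$. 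Summing over $j=1,\dots,5$ gives
\[
\sum_{j=1}^5\|G_j^\eps(t)\|_{L^2}\;\lesssim\;\eps(1+t)\sup_{s\in[0,t]}\Big(\|\phi^\eps(s)\|_{\Sigma^{15}}+\|\phi^\eps(s)\|_{\Sigma^{15}}^{2\sigma+1}\Big).
\]

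The remaining task is to control $\sup_{s\in[0,\min\{T,T^\eps\}]}\|\phi^\eps(s)\|_{\Sigma^{15}}$ uniformly in $\eps$. Here I would use the isometry property of $e^{isH/\varepsilon^2}$ on $\Sigma^{15}$ from Lemma \ref{Ubound} to write $\|\phi^\eps(s)\|_{\Sigma^{15}}=\|e^{isH/\varepsilon^2}\psi^\eps(s)\|_{\Sigma^{15}}=\|\psi^\eps(s)\|_{\Sigma^{15}}$, and then invoke the a priori Gr\"onwall bound \eqref{Sigma4bound}, which is exactly of this type. For $s$ in the open interval $[0,T^\eps)$ this applies directly; at the endpoint $s=\min\{T,T^\eps\}$ it extends by continuity of $s\mapsto\|\psi^\eps(s)\|_{\Sigma^{15}}$, using that $T^\eps<T^\eps_{\rm max}$ whenever $T^\eps<\infty$, as recorded in \eqref{Tcon}. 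Thus
\[
\sup_{s\in[0,\min\{T,T^\eps\}]}\|\phi^\eps(s)\|_{\Sigma^{15}}\;\lesssim\;\|\psi_0\|_{\Sigma^{15}}\,e^{T(2M)^{2\sigma}|\lambda|},
\]
a bound independent of $\eps$. Substituting this back and using $1+t\le 1+T$ yields $\sum_{j=1}^5\|G_j^\eps(t)\|_{L^2}\le\eps\,C(T)$, with $C(T)$ depending only on $T$, $M$, $|\lambda|$, $\sigma$, $b$, and $\|\psi_0\|_{\Sigma^{15}}$, which is the claim.

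I do not expect a genuine obstacle in this step: the hard work — the expansion \eqref{task}, the intricate triple-sum estimates behind Proposition \ref{GepsLem2}, and the construction of the time $T^\eps$ together with the $\eps$-uniform bound \eqref{Sigma4bound} — has already been carried out. The only point requiring a little care is justifying the $\eps$-uniform $\Sigma^{15}$-bound on the \emph{closed} interval up to $\min\{T,T^\eps\}$, which is why I would explicitly appeal to \eqref{Tcon} and to the continuity of $\psi^\eps$ in $\Sigma^{15}$ rather than to the open-interval statement \eqref{Sigma4bound} alone.
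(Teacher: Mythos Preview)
Your proposal is correct and follows essentially the same route as the paper: bound $\sup_{s\in[0,\min\{T,T^\eps\}]}\|\phi^\eps(s)\|_{\Sigma^{15}}$ via Lemma~\ref{Ubound} and the Gr\"onwall estimate \eqref{Sigma4bound}, then feed this into the bounds of Lemma~\ref{GepsLem1} and Proposition~\ref{GepsLem2} and absorb the factor $(1+t)$ into $C(T)$. Your additional care about the closed endpoint $t=\min\{T,T^\eps\}$ via \eqref{Tcon} and continuity is a detail the paper leaves implicit, but the argument is otherwise identical.
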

\begin{proof}
From \eqref{Sigma4bound} and Lemma \ref{Ubound}, we have
	\begin{equation}\begin{split}\label{phiBound}
		\Vert \phi^{\eps}(t)\Vert_{\Sigma^{15}} \lesssim \Vert \psi^{\eps}(t)\Vert_{\Sigma^{15}} \lesssim \|\psi_0\|_{\Sigma^{15}}e^{T(2M)^{2\sigma}|\lambda|}
	\end{split}\end{equation}
for all $t \in [0,\min\{T, T^{\eps}\}]$. Applying this estimate to Lemmas \ref{GepsLem1} and \ref{GepsLem2}, and bounding $t$ above by $T$, completes the proof.
\end{proof}
It therefore remains to bound $\|A_1(t)\|_{L^2}$ and $\|A_2(t)\|_{L^2}$. To this end, we first note that
\[
 \Vert A_1 (t)\Vert_{L^2} \leq \int_0^t \Big\Vert |e^{-is H/\varepsilon^2}\phi^\eps(s)|^{2\sigma}e^{-is H/\varepsilon^2}\phi^\eps(s) - |e^{-is H/\varepsilon^2}\phi(s)|^{2\sigma}e^{-is H/\varepsilon^2}\phi(s)\Big\Vert_{L^2}ds.
\]
This can be estimated further by using well-known (local) Lipschitz estimate: for $u, v \in L^2\cap L^\infty$ and $\sigma \in \mathbb{N}$,
\begin{equation}\label{LipEst}
	\Big\Vert |u|^{2\sigma}u - |v|^{2\sigma}v\Big\Vert_{L^2} \lesssim (\Vert u\Vert_{L^\infty}^{2\sigma} + \Vert v\Vert_{L^\infty}^{2\sigma})\Vert u - v\Vert_{L^2}.
\end{equation}
Since $\Sigma^{15} \hookrightarrow L^\infty$, we obtain
	\begin{equation}\begin{split}\label{A1bound}
		\Vert A_1 (t)\Vert_{L^2} 
		& \lesssim \int_0^t \left(\Vert e^{-is H/\varepsilon^2}\phi^\eps(s)\Vert_{L^\infty}^{2\sigma} + \Vert e^{-is H/\varepsilon^2}\phi(s)\Vert_{L^\infty}^{2\sigma}\right)\Vert \phi^\eps - \phi\Vert_{L^2}ds\\
		&\lesssim (2M)^{2\sigma}\int_0^t \Vert \phi^\eps(s) - \phi(s)\Vert_{L^2}ds.
	\end{split}\end{equation}
To obtain a bound for $\|A_2(t)\|_{L^2}$, we use the following differential identity
	\begin{equation}\begin{split}\label{Fdiff}
		F\left(\frac{s}{\varepsilon^2}, \phi(s)\right) - F_{\rm av}\left(\phi(s)\right) &= \varepsilon^2\partial_s\left(\int_0^{s/\eps^2} \Big(F(\tau, \phi(s)) - F_{\rm av}(\phi(s)) \Big)d\tau\right)\\
		&\quad - \varepsilon^2\int_0^{s/\eps^2} \partial_s\Big(F(\tau, \phi(s)) - F_{\rm av}(\phi(s)) \Big)d\tau,
	\end{split}\end{equation}
which yields
	\begin{equation}\begin{split}\label{A2step}
		A_2 (t)&= \varepsilon^2\int_0^{t/\eps^2} \Big(F(\tau, \phi(t)) - F_{\rm av}(\phi(t)) \Big)d\tau\\
		&\quad\quad - \varepsilon^2\int_0^t\int_0^{s/\eps^2} \partial_s\Big(F(\tau, \phi(s)) - F_{\rm av}(\phi(s)) \Big)d\tau ds.
	\end{split}\end{equation}
Recalling the definition of $F_{\rm av}$ \eqref{eq:average}, we clearly have
	\begin{equation*}\begin{split}
		\int_0^{2\pi/b} \Big(F(\tau, \phi(t)) - F_{\rm av}(\phi(t)) \Big)d\tau = 0.
	\end{split}\end{equation*}
Then, since $F(\tau, \phi(t))$ is $\frac{2\pi}{b}$-periodic in $\tau$, we obtain
	\begin{equation*}
		\sup_{\theta > 0}\Big|\int_0^{\theta} \Big(F(\tau, \phi(t)) - F_{\rm av}(\phi(t)) \Big)d\tau\Big| = \sup_{\theta \in [0,\frac{2\pi}{b}]}\Big|\int_0^{\theta} \Big(F(\tau, \phi(t)) - F_{\rm av}(\phi(t)) \Big)d\tau\Big|.
	\end{equation*}
Similarly, one can show that
	\begin{equation*}
		\sup_{\theta > 0}\Big|\int_0^{\theta} \partial_s\Big(F(\tau, \phi(s)) - F_{\rm av}(\phi(s)) \Big)d\tau\Big| = \sup_{\theta \in [0,\frac{2\pi}{b}]}\Big|\int_0^{\theta} \partial_s\Big(F(\tau, \phi(s)) - F_{\rm av}(\phi(s)) \Big)d\tau\Big|.
	\end{equation*}
Applying these to \eqref{A2step}, we obtain
	\begin{equation}\begin{split}\label{A2step2}
		\|A_2(t)\|_{L^2} &\leq \varepsilon^2\int_0^{2\pi/b} \Big\|F(\tau, \phi(t)) - F_{\rm av}(\phi(t)) \Big\|_{L^2}d\tau\\
		&\quad\quad + \varepsilon^2\int_0^t\int_0^{2\pi/b} \Big\|\partial_s\Big(F(\tau, \phi(s)) - F_{\rm av}(\phi(s)) \Big)\Big\|_{L^2}d\tau ds.
	\end{split}\end{equation}
We can now use Lemma \ref{Ubound} and Corollary \ref{MoserCor} to estimate
	\begin{equation*}
		\Big\|F(\tau, \phi(t)) - F_{\rm av}(\phi(t)) \Big\|_{L^2} \lesssim \Big\|F(\tau, \phi(t)) - F_{\rm av}(\phi(t)) \Big\|_{\Sigma^{15}}\lesssim \|\phi(t)\|_{\Sigma^{15}}^{2\sigma+1},
		\end{equation*}
and, similarly, we obtain
	\begin{equation*}
		\Big\|\partial_s\Big(F(\tau, \phi(s)) - F_{\rm av}(\phi(s)) \Big)\Big\|_{L^2} \lesssim \|\phi(s)\|_{\Sigma^{15}}^{2\sigma}\|\partial_s\phi(s)\|_{\Sigma^{15}}\lesssim \|\phi(s)\|_{\Sigma^{15}}^{4\sigma+1}.
	\end{equation*}
Combining these estimates with \eqref{A2step2} yields
	\begin{equation*}\begin{split}
		\|A_2(t)\|_{L^2} &\lesssim \eps^2(1+t)\sup_{s\in[0,t]}\left(\|\phi(s)\|_{\Sigma^{15}}^{2\sigma+1}+\|\phi(s)\|_{\Sigma^{15}}^{4\sigma+1}\right).
	\end{split}\end{equation*}
Finally, recalling \eqref{phiBound}, we can bound
	\begin{equation}\begin{split}\label{A2bound}
		\|A_2(t)\|_{L^2} &\lesssim \eps^2C(T)
	\end{split}\end{equation}
for all $t\in [0,\min\{T,T^{\eps}\}]$.

\begin{proof}[\textbf{Proof of item (ii) of Theorem \ref{main}}]We can now apply the bounds given by \eqref{A1bound}, \eqref{A2bound}, and Lemma \ref{GepsCor} to \eqref{uFormBound}, to obtain
	\begin{equation}\begin{split}
		\Vert \phi^\eps(t) - \phi(t)\Vert_{L^2} \lesssim C_1\varepsilon + C_2\int_0^t \Vert \phi^\eps (s)- \phi(s)\Vert_{L^2}ds
	\end{split}\end{equation}
for all $t\in [0,\min\{T,T^{\eps}\}]$, where $C_1$ and $C_2$ may depend on $T$ but not $\eps$.
Thus, by Gr\"onwall's inequality we have
	\begin{equation}\begin{split}\label{uBound}
		\Vert \psi^{\varepsilon} (t)- e^{-itH/\varepsilon^2}\phi(t)\Vert_{L^2} = \|\phi^\eps(t) - \phi(t)\Vert_{L^2} \lesssim C_1\varepsilon e^{C_2t} \leq C_1\varepsilon e^{C_2T}
	\end{split}\end{equation}
for all $t\in [0,\min\{T,T^{\eps}\}]$. To complete the proof of Theorem \ref{main}, it remains to show that $\min\{T,T^{\eps}\} = T$. 

First, we use the Gagliardo-Nirenberg inequality, together with (\ref{uBound}) and Lemma \ref{Ubound}, to obtain
	\begin{equation}\begin{aligned}\label{RevEq1}
		 \Vert \psi^{\varepsilon}(t)\Vert_{L^\infty} &\leq \|e^{-itH/\varepsilon^2}\phi(t)\Vert_{L^\infty} + \Vert \psi^{\varepsilon} (t)- e^{-itH/\varepsilon^2}\phi(t)\Vert_{L^\infty}\\
		&\leq M + C'\Vert \psi^{\varepsilon} (t)- e^{-itH/\varepsilon^2}\phi(t)\Vert_{L^2}^{1/2}\, \Vert \psi^{\varepsilon} (t)- e^{-itH/\varepsilon^2}\phi(t)\Vert_{H^2}^{1/2}\\
		&\leq M + C''\varepsilon^{1/2}(\Vert \psi^{\varepsilon}(t)\Vert_{\Sigma^{15}} + \Vert \phi(t)\Vert_{\Sigma^{15}})^{1/2},
\end{aligned}\end{equation}
where $C''$ is independent of $\varepsilon$. Recalling the estimate \eqref{Sigma4bound}, we have the $\eps$-independent bound
\[
	\Vert\psi^{\varepsilon}(t)\Vert_{\Sigma^{15}} \lesssim \Vert\psi_0\Vert_{\Sigma^{15}}e^{T(2M)^{2\sigma}\, |\lambda|}
\]
for all $t \in [0,\min\{T,T^{\eps}\}]$. Additionally, since $T < T_{\rm max}$, $\Vert \phi(t)\Vert_{\Sigma^{15}}$ can be bounded uniformly on $[0,\min\{T,T^{\eps}\}]$. Applying these estimates to \eqref{RevEq1} yields
	\begin{equation}\begin{split}
		 \Vert \psi^{\varepsilon}(t)\Vert_{L^\infty} \leq M + C'''\varepsilon^{1/2},
	\end{split}\end{equation}
where $C'''$ is again independent of $\varepsilon$. Choosing $\varepsilon_T = (M/C''')^2$ and restricting $\varepsilon \in (0, \varepsilon_T)$, we get
	\begin{equation}\begin{split}\label{LinfBound}
		 \Vert \psi^{\varepsilon}(t)\Vert_{L^\infty} < 2M,
	\end{split}\end{equation}
for all $t \in [0,\min\{T,T^{\eps}\}]$. Consequently, we must have $T < T^{\eps}$. This is clear if $T^{\eps} = \infty$. If $T^{\eps} < \infty$, then (\ref{Tcon}) contradicts (\ref{LinfBound}) unless $\min\{T^{\eps},T\} = T < T^{\eps}$.
In summary, we have shown
	\begin{equation*}
		\Vert \psi^{\varepsilon} (t)- e^{-itH/\varepsilon^2}\phi (t)\Vert_{L^2} \leq C_T\varepsilon
	\end{equation*}
for all $t \in [0,T]$ and $\eps \in (0, \varepsilon_T)$, which completes the proof of Theorem \ref{main}.\end{proof}

%%%%%%%%%%%%%%%%%%%%%%%%%%%%%%%%%%%%%%%%%%%%%%%%%%%%%%%%%%

\section{\textbf{Proof of Theorem \ref{mainCor}}}\label{sec:stabCor}

The proof of our second theorem is similar to the one for the previous result, and thus we will only discuss the main differences below.

A first difficulty arises from the fact that $\phi^\eps(t, \cdot) \in \Sigma^2$, only. Hence, we cannot directly quote Lemma \ref{GepsCor} to bound the terms $G^\eps_j = G^\eps_j(t,\phi^\eps)$, $j=1, \dots, 5$. 
To overcome this obstacle, we approximate $\phi^\eps$ with a regularized solution.
For any $\eta > 0$, let $\psi_0^\eta \in \Sigma^{15}$ be the regularization of $\psi_0$ as defined in Lemma \ref{regLem}. Let $\phi^{\eps,\eta}(t, \cdot)$ 
be the solution to \eqref{filteredPDE} with initial data $\psi_0^\eta\in \Sigma^{15}$. By the triangle inequality,
	\begin{equation*}
		\Vert \phi^\eps (t) - \phi (t) \Vert_{L^2} \leq \Vert \phi^\eps (t) - \phi^{\eps,\eta} (t) \Vert_{L^2} + \Vert \phi^{\eps,\eta} (t) - \phi (t) \Vert_{L^2}.
	\end{equation*}
Here, the first term on the r.h.s. can be bounded using the continuous dependence on initial data result for sufficiently small $\eta$. 
For the other term, we have, similarly to the bound \eqref{uFormBound}, that
	\begin{equation}\begin{aligned}\label{uBound2}
		\|\phi^{\eps,\eta} (t)- \phi (t)\|_{L^2} \leq & \, \|\psi_0^\eta-\psi_0\|_{L^2} + \sum_{j=1}^5\|G^\varepsilon_j(t,\phi^{\eps,\eta})\|_{L^2} \\
		&+ \Big \|\int_0^t\widetilde{A_1}(s) \,ds  \Big\|_{L^2} + \Big \|\int_0^t\widetilde{A_2}(s) \, ds\Big\|_{L^2},
	\end{aligned}\end{equation}
where we define
	\begin{equation}\begin{split}
		\widetilde{A_1} (s)= e^{isH/\eps^2}\lambda(\eps x, y)\Big(&|e^{-isH/\eps^2}\phi^{\eps,\eta}(s)|^{2\sigma}e^{-isH/\eps^2}\phi^{\eps,\eta}(s)\\
		&\, - |e^{-isH/\eps^2}\phi(s)|^{2\sigma}e^{-isH/\eps^2}\phi(s)\Big)
	\end{split}\end{equation}
and
	\begin{equation}\begin{split}
		\widetilde{A_2} (s)= e^{isH/\eps^2}\lambda(\eps x, y)|e^{-isH/\eps^2}\phi(s)|^{2\sigma}e^{-isH/\eps^2}\phi(s) - \lambda(0,y)F_{\rm av}(\phi(s)).
	\end{split}\end{equation}
Using the same estimates as in Lemmas \ref{GepsCor} and \ref{regLem} then yields for $j=1, \dots, 5$:
\[
	\|G^\varepsilon_j(t,\phi^{\eps,\eta})\|_{L^2} \leq \eps C(T, \|\psi_0^\eta\|_{\Sigma^{15}}) \leq \eps C(T, (1+\eta^{-13/2})\|\psi_0\|_{\Sigma^{2}}).
\]
Note, however, that this bound grows as $\eta \to 0$, and so we must first fix $\eta_\delta$ 
sufficiently small so that $\|\psi_0^\eta-\psi_0\|_{L^2} \leq \delta$ for some arbitrary $\delta > 0$, and then choose $\eps_\delta$ 
such that $\eps_\delta C(T, (1+\eta_\delta^{-13/2})\|\psi_0\|_{\Sigma^{2}}) \leq \delta$. The price to pay for our regularization is thus the  
loss of the $\mathcal{O}(\eps)$-convergence rate, and instead we only obtain the limiting statement \eqref{mainCorCon}.

We can now try to estimate the remaining terms in \eqref{uBound2} by following the same ideas as for the corresponding terms in \eqref{uFormBound}. The estimate on $\widetilde{A_1}$ follows similarly to $A_1$ in \eqref{A1bound}, where the only additional step is to bound $\lambda$ in the $L^\infty$-norm. To estimate $\widetilde{A_2}$, the main 
obstacle stems from the fact that, for any $\eps>0$, the function $\lambda(\eps \cdot, \cdot) \in W^{15,\infty}(\R^2)$ and the operator $H$ no longer commute. First, we define
	\begin{equation*}\begin{split}
		\widetilde{F}(\theta,u) := \int_0^\theta \left(e^{i\tau H}\lambda(\eps x, y)|e^{-i\tau H}u|^{2\sigma}e^{-i\tau H}u - \lambda(0,y)F_{\rm av}(u)\right)d\tau.
	\end{split}\end{equation*}
This yields, analogously to \eqref{A2step},
	\begin{equation}\begin{aligned}\label{A2tilde}
		& \int_0^t\widetilde{A_2}(s) \, ds = \eps^2\widetilde{F}\left(\frac{t}{\varepsilon^2}, \phi(t)\right) \\
		&- \eps^2\int_0^t\int_0^{s/\eps^2} \partial_s\Big(e^{i\tau H}\lambda(\eps \cdot) |e^{-i\tau H}\phi(s)|^{2\sigma}e^{-i\tau H}\phi(s)
		- \lambda(0,\cdot)F_{\rm av}(\phi(s))\Big)d\tau ds.
	\end{aligned}\end{equation}
We present the bound for $\widetilde{F}$ in the following lemma.
\begin{lemma}For all $t \in [0, T_{max})$,
	\[
		\Big\Vert\widetilde{F}\left(\frac{t}{\varepsilon^2}, \phi(t)\right)\Big\Vert_{L^2} \lesssim \frac{(1+t)}{\eps}\left(\Vert \phi(t)\Vert_{\Sigma^{2}} + \Vert \phi(t)\Vert_{\Sigma^{2}}^{2\sigma+1}\right).
	\]
\end{lemma}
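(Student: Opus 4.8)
The plan is to split $\widetilde F$ into a genuinely periodic contribution, which is handled exactly like the term $A_2$ in Section~\ref{sec:stab}, plus a remainder that is small because $\lambda$ varies slowly in the rescaled variable. Concretely, I would first write $\lambda(\eps x,y)=\lambda(0,y)+\bigl(\lambda(\eps x,y)-\lambda(0,y)\bigr)$ and use that $\lambda(0,y)$ commutes with $e^{i\tau H}$ (which acts only in $x$) to obtain
\[
\widetilde F(\theta,u)=\lambda(0,y)\int_0^\theta\bigl(F(\tau,u)-F_{\rm av}(u)\bigr)\,d\tau+\int_0^\theta e^{i\tau H}\Bigl(\bigl(\lambda(\eps x,y)-\lambda(0,y)\bigr)\bigl|e^{-i\tau H}u\bigr|^{2\sigma}e^{-i\tau H}u\Bigr)\,d\tau .
\]

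For the first term I would argue as for $A_2$: since $F(\cdot,u)$ is $\tfrac{2\pi}{b}$-periodic with $\int_0^{2\pi/b}\bigl(F(\tau,u)-F_{\rm av}(u)\bigr)d\tau=0$, writing $\theta=N\tfrac{2\pi}{b}+r$ with $0\le r<\tfrac{2\pi}{b}$ removes the $N$ full periods, so the $L^2$-norm is bounded, uniformly in $\theta$ and in $\eps$, by $\|\lambda\|_{L^\infty}\int_0^{2\pi/b}\|F(\tau,u)-F_{\rm av}(u)\|_{L^2}\,d\tau$. Lemma~\ref{Ubound} together with the embedding $\Sigma^2\hookrightarrow L^\infty$ gives $\|F(\tau,u)\|_{L^2}\le\|e^{-i\tau H}u\|_{L^\infty}^{2\sigma}\|e^{-i\tau H}u\|_{L^2}\lesssim\|u\|_{\Sigma^2}^{2\sigma+1}$, and the same bound for $F_{\rm av}(u)$ by averaging; hence the first term is $\lesssim\|u\|_{\Sigma^2}^{2\sigma+1}$, with no power of $\theta$ or $1/\eps$.

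For the second term, the key point is that $\lambda\in W^{15,\infty}(\R^2)\subset W^{1,\infty}(\R^2)$ yields, by the fundamental theorem of calculus in the first slot, the pointwise bound $|\lambda(\eps x,y)-\lambda(0,y)|\le\eps|x|\,\|\lambda\|_{W^{1,\infty}}$. Since $e^{i\tau H}$ is unitary on $L^2$, each $\tau$-integrand is bounded in $L^2$ by $\eps\|\lambda\|_{W^{1,\infty}}\bigl\|\,x\,|e^{-i\tau H}u|^{2\sigma}e^{-i\tau H}u\bigr\|_{L^2}\le\eps\|\lambda\|_{W^{1,\infty}}\|e^{-i\tau H}u\|_{L^\infty}^{2\sigma}\|x\,e^{-i\tau H}u\|_{L^2}$, and Lemmas~\ref{1} and~\ref{Ubound} (plus $\Sigma^2\hookrightarrow L^\infty$) bound this by a constant times $\eps\|u\|_{\Sigma^2}^{2\sigma+1}$, uniformly in $\tau$. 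Integrating over $\tau\in[0,\theta]$ costs a factor $\theta$, and taking $\theta=t/\eps^2$ turns this into $\tfrac{t}{\eps}\|u\|_{\Sigma^2}^{2\sigma+1}$.

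Adding the two estimates and using $\eps\le\eps_{15}\le1$, so $1/\eps\ge1$, we get $\|\widetilde F(t/\eps^2,u)\|_{L^2}\lesssim\tfrac{1+t}{\eps}\|u\|_{\Sigma^2}^{2\sigma+1}\le\tfrac{1+t}{\eps}\bigl(\|u\|_{\Sigma^2}+\|u\|_{\Sigma^2}^{2\sigma+1}\bigr)$, and specializing to $u=\phi(t)$ (which lies in $\Sigma^2$ by Proposition~\ref{wp2}) completes the proof. I do not expect a serious obstacle; the only point requiring care is checking that the periodic first term carries no hidden power of $\theta=t/\eps^2$ or of $1/\eps$ — this is exactly what the periodicity of $F$ and the vanishing of its average provide — while the single power of $t/\eps$ in the statement arises solely from the non-periodic term, as the product of the $\mathcal O(\eps)$ smallness of $\lambda(\eps x,y)-\lambda(0,y)$ with the length $t/\eps^2$ of the integration window.
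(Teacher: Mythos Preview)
Your proof is correct and uses the same two ingredients as the paper: periodicity with vanishing average for one piece, and the mean value estimate $|\lambda(\eps x,y)-\lambda(0,y)|\le\eps|x|\|\lambda\|_{W^{1,\infty}}$ for the other. The only difference is where you place the split. The paper adds and subtracts the $\eps$-dependent time average $\tfrac{b}{2\pi}\int_0^{2\pi/b}e^{i\rho H}\lambda(\eps x,y)|e^{-i\rho H}u|^{2\sigma}e^{-i\rho H}u\,d\rho$, producing a periodic zero-mean part $F_1$ (still containing $\lambda(\eps x,y)$) and a remainder $F_2$ comparing the two averages; you instead split $\lambda(\eps x,y)=\lambda(0,y)+(\lambda(\eps x,y)-\lambda(0,y))$ directly and exploit that $\lambda(0,y)$ commutes with $e^{i\tau H}$. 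Your route is a bit more transparent, since the periodic piece is immediately $\lambda(0,y)\int_0^\theta(F(\tau,u)-F_{\rm av}(u))\,d\tau$ and you can quote the $A_2$ argument verbatim; the paper's choice keeps $\lambda(\eps x,y)$ in the periodic term, which works just as well but hides the analogy with $A_2$ slightly. Both arrive at the same bound with the same dependence on $t$ and $\eps$.
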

\begin{proof}
In the spirit of Lemma \ref{tildeGbound}, we will decompose $\widetilde{F}(\theta, u)$ into two parts, the first of which is $\frac{2\pi}{b}$-periodic in $\theta$ with $0$-average:
	\begin{equation*}\begin{aligned}
		\widetilde{F}(\theta, u) &= \int_0^\theta \Big(e^{i\tau H}\lambda(\eps x, y)|e^{-i\tau H}u|^{2\sigma}e^{-i\tau H}u\\
		&\quad\quad\quad\quad\quad\quad - \frac{b}{2\pi}\int_0^{\frac{2\pi}{b}}e^{i\rho H}\lambda(\varepsilon x, y)|e^{-i\rho H}u|^{2\sigma}e^{-i\rho H}ud\rho\Big)d\tau\\
		&\quad\quad + \int_0^\theta \left(\frac{b}{2\pi}\int_0^{\frac{2\pi}{b}}e^{i\rho H}\lambda(\varepsilon x, y)|e^{-i\rho H}u|^{2\sigma}e^{-i\rho H}ud\rho - \lambda(0,y)F_{\rm av}(u)\right)d\tau\\
		&:= F_1(\theta, u) + F_2(\theta, u).
	\end{aligned}\end{equation*}
By construction, we have
	\begin{equation*}\begin{aligned}
		\sup_{\theta > 0}\Vert F_1 \Vert_{L^2} \leq & \, \sup_{\theta \in (0,2\pi/b]}\int_0^\theta \Big\|e^{i\tau H}\lambda(\eps x, y)|e^{-i\tau H}u|^{2\sigma}e^{-i\tau H}u\\
		& \, - \frac{b}{2\pi}\int_0^{\frac{2\pi}{b}}e^{i\rho H}\lambda(\varepsilon x, y)|e^{-i\rho H}u|^{2\sigma}e^{-i\rho H}u \, d\rho\Big\|_{L^2}d\tau.
	\end{aligned}\end{equation*}
Combining Corollary \ref{MoserCor} and Lemma \ref{Ubound} then gives
	\begin{equation}\begin{split}\label{F1bound}
		\Big\Vert F_1\left(\frac{t}{\varepsilon^2}, \phi(t)\right)\Big\Vert_{L^2} &\leq \sup_{\theta > 0}\Vert F_1(\theta, \phi(t)) \Vert_{L^2}\\
		&\lesssim \sup_{\theta \in (0,2\pi/b]}\int_0^\theta \Vert \lambda \Vert_{W^{2,\infty}}\Vert \phi(t) \Vert_{\Sigma^2}^{2\sigma + 1} d\tau\\
		&= \frac{2\pi}{b}\Vert \lambda \Vert_{W^{2,\infty}}\Vert \phi(t) \Vert_{\Sigma^2}^{2\sigma + 1}.
	\end{split}\end{equation}

To bound $\Vert F_2(\frac{t}{\varepsilon^2}, \phi(t))\Vert_{L^2}$, we first note that
	\begin{equation*}\begin{split}
		\Big\Vert F_2\left(\frac{t}{\varepsilon^2}, \phi(t)\right)\Big\Vert_{L^2} &\lesssim \int_0^{t/\varepsilon^2} \int_0^{\frac{2\pi}{b}}\left\Vert\Big(\lambda(\varepsilon x, y) 
		- \lambda(0, y)\Big)|e^{-i\rho H}\phi(t)|^{2\sigma}e^{-i\rho H}\phi(t)\right\Vert_{L^2}d\rho \, d\tau\\
		 &\leq \frac{t}{\varepsilon^2}\int_0^{\frac{2\pi}{b}}M^{2\sigma}\left\Vert\Big(\lambda(\varepsilon x, y) - \lambda(0, y)\Big)e^{-i\rho H}\phi(t)\right\Vert_{L^2}d\rho.
	\end{split}\end{equation*}
By the mean value theorem
\[
\left| \lambda(\varepsilon x, y) - \lambda(0, y) \right| \leq \eps |x| \|\partial_x \lambda\|_{L^\infty} ,
\]
and thus
	\begin{equation*}
		\Vert(\lambda(\varepsilon \cdot, \cdot) - \lambda(0, \cdot))e^{-i\rho H}\phi(t)\Vert_{L^2} 
		\lesssim \varepsilon\Vert \phi(t)\Vert_{\Sigma^1}\Vert \lambda\Vert_{W^{1,\infty}}.
	\end{equation*}
We therefore obtain
	\begin{equation*}
		\Big\Vert F_2\left(\frac{t}{\varepsilon^2}, \phi(t)\right)\Big\Vert_{L^2} \lesssim \frac{t}{\varepsilon}\Vert \phi(t)\Vert_{\Sigma^1}.
	\end{equation*}
which together with (\ref{F1bound}) establishes the desired estimate.
\end{proof}
A similar estimate can be shown for the second term on the r.h.s of \eqref{A2tilde}, which follows almost identically as in the proof above, 
and hence we omit the details. In summary, we then have
\[
	\left\|\int_0^t\widetilde{A_2}(s) \, ds\right\|_{L^2} \leq \eps C(T, \|\psi_0\|_{\Sigma^2}),
\]
and combining all these estimates above, we obtain
\[
	\|\phi^{\eps,\eta} (t)- \phi (t)\|_{L^2} \leq \rho(\eps) + C_1\eps + C_2\int_0^t\|\phi^{\eps,\eta} (s)- \phi (s)\|_{L^2}ds,
\]
where $C_1$ and $C_2$ are independent of $\eps$ and $\rho(\eps) \to 0$ as $\eps \to 0^+$. From here, the remainder of the proof follows analogously to the proof of item (ii) in Theorem \ref{main}.

%%%%%%%%%%%%%%%%%%%%%%%%%%%%%%%%%%%%%%%%%%%

\section{\textbf{Dynamics for polarized initial data}}\label{sec:eigen}

This section is dedicated to proving Coroallary \ref{eigenThm} for initial data of the form $\psi_0 (x,y)= \alpha_0(y)\chi_n(x)$, where $\alpha \in \mathbb C$ is some sufficiently regular amplitude. 
This follows similarly to the arguments given in \cite[Section 4]{sparber}, but for the benefit of the reader we shall present here full details: 

Denote by $P_n=| \chi_n\rangle \langle \chi_n|$ 
the projection onto the $n^{\rm th}$-eigenspace of $H$, and let 
$P_n^\perp = 1-P_n$. We shall first show that, under the dynamics of \eqref{limeqalt2}, the orthogonal complement $w (t, x,y):= P_n^\perp \phi(t,x,y)$ is identically zero 
for all $t \in [0,T_{\rm max})$. Applying $P_n^\perp$ to \eqref{limeqalt2} yields
	\begin{equation*}\begin{aligned}
		i\partial_t w &= \frac{\lambda b}{2\pi}\int_0^{2\pi/b}P_n^\perp F(\theta, \phi)d\theta\\
		&= \frac{\lambda b}{2\pi}\int_0^{2\pi/b}P_n^\perp \big(F(\theta, \phi) - F(\theta, P_n\phi)\big)d\theta + \frac{\lambda b}{2\pi}\int_0^{2\pi/b}P_n^\perp F(\theta, P_n\phi)d\theta,
	\end{aligned}\end{equation*}
where we denote $\lambda = \lambda(0,y)$ for simplicity. We can compute the second term above explicitly:
	\begin{equation*}\begin{aligned}
		\frac{\lambda b}{2\pi}\int_0^{2\pi/b}P_n^\perp F(\theta, P_n\phi)d\theta &= \frac{\lambda b}{2\pi}\int_0^{2\pi/b}P_n^\perp e^{i\theta H}(|P_n\phi|^{2\sigma}e^{-i\theta E_n}P_n\phi)d\theta\\
		&= \frac{\lambda b}{2\pi}\sum_{m \neq n}\int_0^{2\pi/b} e^{i\theta (E_m-E_n)}P_n^\perp P_m(|P_n\phi|^{2\sigma}P_n\phi)d\theta= 0.
	\end{aligned}\end{equation*}
Hence, we obtain the Duhamel formulation
	\begin{equation*}\begin{aligned}
		w(t, \cdot, \cdot) &= -\frac{i\lambda b}{2\pi}\int_0^t\int_0^{2\pi/b}P_n^\perp \big(F(\theta, \phi(s)) - F(\theta, P_n\phi(s))\big)d\theta \, ds,
	\end{aligned}\end{equation*}
where we recall that $w(0, \cdot, \cdot) = P_n^\perp \psi_0 = 0$, by assumption. We can now estimate
	\begin{equation*}\begin{aligned}
		\|w(t)\|_{L^2} &\leq \frac{b}{2\pi}\|\lambda\|_{L^\infty}\int_0^t\int_0^{2\pi/b}\big\|P_n^\perp \big(F(\theta, \phi(s)) - F(\theta, P_n\phi(s))\big)\big\|_{L^2}d\theta \, ds\\
		&\leq \frac{b}{2\pi}\|\lambda\|_{L^\infty}\int_0^t\int_0^{2\pi/b}\big\|F(\theta, \phi(s)) - F(\theta, P_n\phi(s))\big\|_{L^2}d\theta \, ds.
	\end{aligned}\end{equation*}
Next, we fix $T \in (0,T_{\rm max})$ and recall the estimate \eqref{LipEst} to find
	\begin{equation*}\begin{aligned}
		\|w(t)\|_{L^2} &\lesssim \frac{b}{2\pi}\|\lambda\|_{L^\infty}\int_0^t\int_0^{2\pi/b}\Big(\big(\|e^{-i\theta H}\phi(s)\|_{L^\infty}^{2\sigma} + \|P_n\phi(s)\|_{L^\infty}^{2\sigma}\big)\\
		&\quad\quad\quad\quad\quad\quad\quad\quad\quad\quad\quad\quad\cdot\|\phi(s) - P_n\phi(s)\|_{L^2}\Big)d\theta \, ds\\
		&\leq C(T)\int_0^t\|w(s)\|_{L^2}ds
	\end{aligned}\end{equation*}
for all $t \in [0,T]$. Here we used $\|P_n\phi(s)\|_{L^\infty} \lesssim \|P_n\phi(s)\|_{\Sigma^2} \leq \|\phi(s)\|_{\Sigma^2}$, where the last inequality holds since $P_n$ commutes with each term in the $\Sigma^2$-norm. Thus, Gr\"onwall's inequality yields
	\begin{equation*}
		\|w(t)\|_{L^2} = 0 \quad \text{for all } t \in [0,T].
	\end{equation*}
Here, $T$ was arbitrary, so we have $\phi = \alpha(t,y) \chi_n(x)$ for all $t \in [0,T_{\rm max})$.

In a second step, we shall now derive an equation for the time evolution of the amplitude $\alpha(t, \cdot)$ by plugging the ansatz $\phi (t,x,y)= \alpha(t,y) \chi_n(x)$ into \eqref{limeqalt2}. This yields
	\begin{equation*}\begin{aligned}
		i\chi_n\,  \partial_t\alpha &= \frac{\lambda b}{2\pi}\int_0^{2\pi/b}F(\theta, \alpha\chi_n)d\theta\\
		&=\frac{\lambda b}{2\pi}\int_0^{2\pi/b}e^{i\theta H}\big(|\alpha\chi_n|^{2\sigma}e^{-i\theta E_n}\alpha\chi_n\big)d\theta\\
		&=\frac{\lambda b}{2\pi}\sum_{m=0}^\infty\int_0^{2\pi/b}e^{i\theta (E_m-E_n)}P_m\big(|\alpha\chi_n|^{2\sigma}\alpha\chi_n\big)d\theta.
	\end{aligned}\end{equation*}
The integral vanishes for $m\neq n$, and thus
	\begin{equation*}\begin{aligned}
		i \chi_n\, \partial_t\alpha &=\frac{\lambda b}{2\pi}\int_0^{2\pi/b}|\alpha|^{2\sigma}\alpha P_n\big(|\chi_n|^{2\sigma}\chi_n\big)d\theta\\
		&= \lambda|\alpha|^{2\sigma}\alpha \chi_n \int_\R |\chi_n|^{2\sigma+2}dx\\
		&=  \lambda_n|\alpha|^{2\sigma}\alpha \chi_n,
	\end{aligned}\end{equation*}
where we define $\lambda_n(y) := \lambda(0,y)\|\chi_n\|_{L^{2\sigma+2}}^{2\sigma+2}\in \R$. Multiplying by $\overline \chi_n$ and using the fact that $\| \chi_n\|_{L^2}=1$ then yields
	\begin{equation*}
		i\partial_t\alpha = \lambda_n|\alpha|^{2\sigma}\alpha.
	\end{equation*}
Since $\lambda_n \in \R$ we infer from this ODE that
\[
	i\partial_t|\alpha|^2 = i\overline{\alpha}\partial_t\alpha + i\alpha\partial_t\overline{\alpha} = 0,
\]
and so $|\alpha(t,y)| =  |\alpha_0(y)|$ for all $t$. Thus, $\alpha(t, \cdot)\in \mathbb C$ must take the form
\[
	\alpha(t,y) = \alpha_0(y)e^{it\omega_n(y)}, \quad \text{for some $\omega_n(y) \in \R$,}
\]
and one easily finds $\omega_n(y) = \lambda(0,y)|\alpha_0(y)|^{2\sigma}\|\chi_n\|_{L^{2\sigma+2}}^{2\sigma+2}$, as claimed in Corollary \ref{eigenThm}.

%%%%%%%%%%%%%%%%%%%%%%%%%%%%%%%%%%%%%%%%%%%%
\appendix

\section{\textbf{Proof of Proposition \ref{NormEquiv}}}

\begin{proof} The proof is similar to one provided in \cite[Appendix A]{dele}. Recall that $H$ is the one-dimensional harmonic oscillator acting in the $x$-direction and 
let $H_\eps$ be given by \eqref{Heps}. We can then define the self-adjoint operator
	\begin{equation*}
		\mathcal T_m := \frac{1}{\varepsilon}\big(H_\varepsilon^m - H^m\big).
	\end{equation*}
We claim that there exists $C_m > 0$, such that for all $\varepsilon \in (0, 1]$,
	\begin{equation}\label{eq2}
		|\langle \mathcal T_mu, u \rangle_{L^2}| \leq C_m\Vert u\Vert_{\Sigma^m}^2,
	\end{equation}
where we denote $\langle\cdot,\cdot\rangle_{L^2}$ to be the inner product on $L^2(\mathbb{R}^2)$. Computing
	\begin{equation*}
		\Vert H_\varepsilon^{m/2}u\Vert_{L^2}^2 = \langle H^mu, u\rangle_{L^2} + \langle \varepsilon \mathcal T_mu, u\rangle_{L^2} = \Vert H^{m/2}u\Vert_{L^2}^2 + \varepsilon\langle \mathcal T_mu, u\rangle_{L^2}.
	\end{equation*}
we infer that, if \eqref{eq2} holds, then
	\begin{equation*}
		\Vert H^{m/2}u\Vert_{L^2}^2 - \varepsilon C_m\Vert u\Vert_{\Sigma^m}^2 \leq \Vert H_\varepsilon^{m/2}u\Vert_{L^2}^2 \leq \Vert H^{m/2}u\Vert_{L^2}^2 + \varepsilon C_m\Vert u\Vert_{\Sigma^m}^2.
	\end{equation*}
Therefore, we only need to show that \eqref{eq2} holds and choose $\varepsilon_m = \frac{1}{2C_m}$ to obtain the statement of the lemma for that particular value of \textit{m}. 
We will do this directly for $m=0$ and $m=1$, and then use induction for $m \geq 2$.

\smallskip

\textbf{Case $m = 0, 1$: } The estimate \eqref{eq2} holds trivially for $m = 0$. For $m = 1$, we have
	\begin{equation*}\begin{aligned}
		|\langle \mathcal T_1u, u \rangle_{L^2}| &= \big|\langle (-ibx\partial_y - \varepsilon\tfrac{1}{2}\partial^2_y)u, u \rangle_{L^2}\big|\\
		&= \big|\langle (-bx + i\varepsilon\tfrac{1}{2}\partial_y)u, (i\partial_y)u \rangle_{L^2}\big|\\
		&\leq \Vert (-bx + i\varepsilon\tfrac{1}{2}\partial_y)u\Vert_{L^2}\Vert (i\partial_y)u\Vert_{L^2}\lesssim \Vert u\Vert_{\Sigma^1}^2,
	\end{aligned}
	\end{equation*}
where in the last line we have used Cauchy-Schwarz and Lemma \ref{1}.

\smallskip

\textbf{Case $m \geq 2$:} We proceed via induction: assume \eqref{eq2} holds for $m - 1$ and $m - 2$. 
Note that this assumption implies
	\begin{equation}\label{indAs}
		\Vert H_\varepsilon^{(m-1)/2}u\Vert_{L^2}^2 \lesssim \Vert u\Vert_{\Sigma^{m-1}}^2,\quad
		\Vert H_\varepsilon^{(m-2)/2}u\Vert_{L^2}^2 \lesssim \Vert u\Vert_{\Sigma^{m-2}}^2.
	\end{equation}
We then compute
	\begin{equation*}\begin{split}
		H_\varepsilon^m &= H_\varepsilon^{m-1}(H + \varepsilon \mathcal T_1)\\
		&= (H + \varepsilon \mathcal T_1)H_\varepsilon^{m-2}H + \varepsilon H_\varepsilon^{m-1}\mathcal T_1\\
		&= H(H^{m-2} + \varepsilon \mathcal T_{m-2})H + \varepsilon \mathcal T_1H_\varepsilon^{m-2}H + \varepsilon H_\varepsilon^{m-1}\mathcal T_1\\
		&= H^m + \varepsilon H \mathcal T_{m-2}H + \varepsilon \mathcal T_1H_\varepsilon^{m-2}H + \varepsilon H_\varepsilon^{m-1}\mathcal T_1.
	\end{split}\end{equation*}
We can therefore express $\mathcal T_m$ as
	\begin{equation}\label{aa}
		\mathcal T_m =  H\mathcal T_{m-2}H +  \mathcal T_1H_\varepsilon^{m-2}H +  H_\varepsilon^{m-1}\mathcal T_1,
	\end{equation}
and bound each of the terms on the right hand side separately. First, we see that, since $H$ is self-adjoint, 
	\begin{equation}\label{a1}
		|\langle H\mathcal T_{m-2}Hu, u \rangle_{L^2}| = |\langle \mathcal T_{m-2}Hu, Hu \rangle_{L^2}|
		\leq C_{m-2}\Vert Hu\Vert_{\Sigma^{m-2}}^2
		\lesssim \Vert u\Vert_{\Sigma^{m}}^2,
	\end{equation}
where we've used (\ref{indAs}) and Lemma \ref{1}. Next, we can estimate the second term on the r.h.s. of \eqref{aa} via
	\begin{equation*}\begin{split}
		|\langle \mathcal T_1H_\varepsilon^{m-2}Hu, u \rangle_{L^2}| &= |\langle H_\varepsilon^{(m-2)/2}Hu, H_\varepsilon^{(m-2)/2}\mathcal T_1u \rangle_{L^2}|\\
		&\leq \Vert H_\varepsilon^{(m-2)/2}Hu\Vert_{L^2}\Vert H_\varepsilon^{(m-2)/2}(-ibx\partial_y - \varepsilon\tfrac{1}{2}\partial^2_y)u\Vert_{L^2},
	\end{split}\end{equation*}	
and again using (\ref{indAs}) and Lemma \ref{1} consequently yields
\begin{equation}\label{a2}
		|\langle \mathcal T_1H_\varepsilon^{m-2}Hu, u \rangle_{L^2}| 
		\lesssim \Vert Hu\Vert_{\Sigma^{m-2}}\Vert (-ibx\partial_y - \varepsilon\tfrac{1}{2}\partial^2_y)u\Vert_{\Sigma^{m-2}}
		\lesssim \Vert u\Vert_{\Sigma^{m}}^2,
\end{equation}	
Finally, we consider the last therm on the r.h.s. of \eqref{aa}:
	\begin{equation*}\begin{split}
		|\langle  H_\varepsilon^{m-1}\mathcal T_1u, u \rangle_{L^2}| &= |\langle  H_\varepsilon^{(m-1)/2}(-ibx\partial_y - \varepsilon\tfrac{1}{2}\partial^2_y)u, H_\varepsilon^{(m-1)/2}u \rangle_{L^2}|\\
		&= |\langle H_\varepsilon^{(m-1)/2}(-bx + i\varepsilon\tfrac{1}{2}\partial_y)u, H_\varepsilon^{(m-1)/2} (i\partial_y)u \rangle_{L^2}|\\
		&\leq \Vert H_\varepsilon^{(m-1)/2}(-bx + i\varepsilon\tfrac{1}{2}\partial_y)u\Vert_{L^2}\Vert H_\varepsilon^{(m-1)/2} (i\partial_y)u\Vert_{L^2}.
	\end{split}\end{equation*}
Using once more (\ref{indAs}) and Lemma \ref{1} then yields
\begin{equation}\begin{split}\label{a3}
		|\langle  H_\varepsilon^{m-1}\mathcal T_1u, u \rangle_{L^2}| 
		\lesssim \Vert (-bx + i\varepsilon\tfrac{1}{2}\partial_y)u\Vert_{\Sigma^{m-1}}\Vert (i\partial_y)u\Vert_{\Sigma^{m-1}}
		\lesssim \Vert u\Vert_{\Sigma^{m}}^2.
	\end{split}\end{equation}
Combining (\ref{a1}), (\ref{a2}), and (\ref{a3}) proves that \eqref{eq2} holds for $m\geq 2$ (assuming it does so for 
$m-1$ and $m-2$). By induction it therefore also holds for all $m \geq 0$. This completes the proof.
\end{proof}
%%%%%%%%%%%%%%%%%%%%%%%%%%%%%%%%%%%
%%%%%%%%%%%%%%%%%%%%%%%%%%%%%%%%%%%

\bibliographystyle{amsplain}

\begin{thebibliography}{99}

\bibitem{mehat} N. Ben Abdallah, F. Castella, and F. M\'ehats, \textit{Time Averaging for the Strongly Confined Nonlinear Schr\"odinger Equation, Using Almost-Periodicity}. J. Diff. Eqs., {\bf 245} (2008), 154--200.

%\bibitem{caze} T.\ Cazenave, \emph{Semlinear Schr\"odinger equations}. Courant Lecture Notes in Mathematics, vol.\ 10, New York University Courant Institute of Mathematical Sciences, New York (2003).

\bibitem{CHSV} E. C\'ardenas, D. Hundertmark, E. Stockmeyer, and S. Vugalter, {\it On the asymptotic dynamics of 2-D magnetic quantum systems}, Ann. Henri Poincar\'e, {\bf 22} (2021), 415--445.

\bibitem{dele} F. Delebecque-Fendt and F. M\'ehats, \textit{An effective mass theorem for the bidimensional electron gas in a strong magnetic field}. Comm. Math. Phys., {\bf 292} (2009), no. 3, 829--270.

\bibitem{Erd} L. Erd\"os, {\it Recent developments in quantum mechanics with magnetic fields}. In: Proc. of Symposia in Pure Math. Vol 76., Spectral Theory and Mathematical Physics: 
A Festschrift in Honor of Barry Simon\"os 60th Birthday, Amer. Math. Soc. 2006.

\bibitem{sparber} R. Frank, F. M\'ehats, and C. Sparber, \textit{Averaging of Nonlinear Schr\"odinger Equations with Strong Magnetic Confinement}. Comm. Math. Sci., {\bf 15} (2017), no. 7, 1933--1945.

\bibitem{HS} C. Hainzl, B. Schlein, {\it Dynamics of Bose-Einstein condensates of fermion pairs in the low density limit of BCS theory}. J. Funct. Anal. {\bf 265} (2013), no. 3, 399--423.

\bibitem{HiSo} P. Hislop and E. Soccorsi, {\it Edge states induced by Iwatsuka Hamiltonians with positive magnetic fields}. J. Math. Anal. Appl., {\bf 422} (2015), issue 1, 594--624.

\bibitem{Iwa1} A. Iwatsuka, {\it The essential spectrum of two-dimensional Schr\"odinger operators with perturbed constant magnetic fields}. J. Math. Kyoto Univ., {\bf 23} (1983), no. 3, 475--480.

\bibitem{Iwa2} A. Iwatsuka, {\it Examples of absolutely continuous Schr\"odinger operators in magnetic fields}, Publ. Res. Inst. Math. Sci., Kyoto Univ., {\bf 21} (1985), 385--401.

\bibitem{MaPu} M. Mantoiu and R. Purice, {\it Some propagation properties of the Iwatsuka model}. Commun. Math. Phys., {\bf 188} (1997), 691--708. 

\bibitem{MeSp} F. M\'ehats and C. Sparber, \emph{Dimension reduction for rotating Bose-Einstein condensates with anisotropic confinement.} Discrete Contin. Dyn. Syst. {\bf 36} (2016), no. 9, 
5097--5118.

\bibitem{messiah} A. Messiah, {\it Quantum Mechanics.} Dover Publications Inc., New York (2014).

\bibitem{Mi} L. Michel, {\it Remarks on non-linear Schr\"odinger equation with magnetic fields}, Comm. Partial Differential Equ., {\bf 33} (2008), no.7, 1198--1215.

\bibitem{naka} Y. Nakamura and A. Shimomura, \textit{Local well-posedness and smoothing effects of strong solutions for nonlinear Schr\"odinger equations with potentials and magnetic fields}. 
Hokkaido Math. J., {\bf 34} (2005), 37--67. 

%\bibitem{Ya} K. Yajima, {\it Schr\"odinger evolution equations with magnetic fields}. J. Analyse Math. {\bf 56} (1991), 29--76.

\end{thebibliography}

\end{document}